\newcommand\myshade{85}
\colorlet{mylinkcolor}{violet}
\colorlet{mycitecolor}{YellowOrange}
\colorlet{myurlcolor}{Aquamarine}
\newtheorem{theorem}{Theorem}[section]
\newtheorem*{theorem*}{Theorem}
\newtheorem{proposition}[theorem]{Proposition}
\newtheorem{lemma}[theorem]{Lemma}
\newtheorem{corollary}[theorem]{Corollary}
\theoremstyle{definition}
\newtheorem{definition}[theorem]{Definition}
\newtheorem{example}[theorem]{Example}
\newtheorem{remark}[theorem]{Remark}
\newcommand{\EA}{\operatorname{EA}}
\newcommand{\EP}{\operatorname{EP}}
\newcommand{\IA}{\operatorname{IA}}
\newcommand{\IP}{\operatorname{IP}}
\newcommand{\supp}{\operatorname{supp}}
\newcommand{\0}{\emptyset}
\def\NBC{{\textsc{NBC}}}
\def\aNBC{\Delta^{\textup{nbc}}}
\title{The augmented External Activity Complex of a Matroid} 
\author{Andrew Berget}
\author{Dania Morales}
\begin{document}
\begin{abstract}
    For a matroid, we define a new simplicial complex whose facets are indexed by its independent sets. This complex contains the  external activity complex as a subcomplex. We call our complex the augmented external activity complex since its definition is motivated by the recently defined augmented tautological classes of matroids. We prove that our complex is shellable and show that our shelling satisfies the stronger property of being an $H$-shelling. This explicates our result that the $h$-vector of our complex is the $f$-vector of the independence complex. We also define an augmented no broken circuit complex, which contains the usual no broken circuit complex as a subcomplex. We prove its shellability and show that our shelling is also an $H$-shelling. The $h$-vector of this complex is the $f$-vector of the no broken circuit complex.
\end{abstract}
\maketitle
\section{Introduction}
While any simplicial complex can be assembled one face at a time in an apparently haphazard way, \textit{shellable} simplical complexes are those that can be assembled with some measure of control on the topology of this process. One requires that the facets of the complex can be added in such an order that when adding a new facet, it meets the previously built complex along a pure subcomplex. Such an ordering of the facets is called a shelling of the complex, and among other important consequences, shellability implies that a simplicial complex is Cohen-Macaulay and homotopy equivalent to a wedge of spheres.

Matroids give rise to supernumerary examples of shellable simplicial complexes. Two of the most important and earliest results in this area are the shellability of the independence complex and the order complex of the lattice  of flats of a matroid, due to Provan and Billera, and Garsia, respectively. Provan also showed that the no broken circuit complex of $M$ is shellable. For all of these results in one place, see \cite{bjorner}. The Bergman complex of a matroid, used in the resolution of the Rota-Heron-Welsh conjecture \cite{AHK}, is homeomorphic to the order complex of its lattice of proper flats \cite{ardilaklivans} and is, thus,  shellable. The augmented Bergman complex of a matroid, used in the resolution of the Dowling-Wilson conjecture \cite{braden}, interpolates between the independence complex and the lattice of flats, and was recently shown to be shellable \cite{reiner}. An interesting phenomenon appears in each of these examples: it is possible to describe not just one, but many different shellings of these complexes. As a simplest example, any linear order of the ground set of a matroid induces a lexicographic ordering on its bases, and this is a shelling. This property can be used to characterize matroids among all simplicial complexes.

In \cite{ardilaboocher} a curiously defined complex called the \textit{external activity complex} was discovered. This complex arises naturally in the study of what is now called the \textit{Schubert variety of a linear space}, which was used in the proof of the Dowling-Wilson conjecture of a realizable matroid \cite{huhwang}. This variety is almost always singular. A consequence of \cite{ardilaboocher} is the Cohen-Macaulay property of the external activity complex, which means that it has, in a sense, mild singularities. In \cite{ACS} the topology of the external activity complex was studied in greater depth, and it was shown to be shellable.

The facets of the external activity complex are in bijection with the bases of the matroid, however they record finer information than just the bases: they record each basis $B$ along with its \textit{external activity} $\EA(B)$, which is to say the elements $e$ not in the basis $B$ that are the maximum of the unique circuit of $B \cup \{e\}$. External activity is a concept dating back to Whitney's description of the chromatic polynomial of a graph in terms of no broken circuit sets  \cite{whitney} and made formal in Crapo's definition of the Tutte polynomial \cite{crapo}.  There is a notion of internal activity $\IA(B)$ of a basis,  which is obtained from external activity by matroid duality. In a systematic study of orderings of bases of matroids that are known to give shellings, Las Vergnas \cite{LV} introduced partial orders on the bases of a matroid coming from internal and external activity. The main result in \cite{ACS} shows that linear extensions of Las Vergnas's external/internal partial order induce shelling orders on both the external activity complex and the independence complex, which is a subcomplex of the external activity complex.

In the current work, we describe a naturally occurring simplicial complex whose facets are indexed by the independent sets of a matroid. The facet of an independent set records, essentially, its internal and external activity. Our main results are summarized below. We use the notation $x_S = \prod_{i \in S}x_i$ to describe monomials, and a simplicial complex is described by a listing a collection of square free monomials corresponding to its facets. 
\begin{theorem*}
Let $M$ be a matroid of rank $r$ on the set $E$.  Define a simplicial complex $\Delta_M$, the augmented external activity complex of $M$, with ground set $\{x_e, y_e,z_e : e \in E\}$ and a facet,
\[ x_{I \cup \EP(I)} y_Y z_{I \cup \EA(I)}\]
for each independent set $I$ of $M$, where
$I = B\setminus Y$, with $B$ a basis and $Y \subseteq \IA(B)$. Here $\EA, \EP, \IA$ are external activity and passivity, and internal activity, respectively. Then,
\begin{enumerate}
    \item $\Delta_M$ contains the external activity complex of \cite{ACS} as a subcomplex.
    \item Any linear extension of the Las Vergnas's external/internal order on independent sets of $M$ gives a shelling of $\Delta_M$. In particular, $\Delta_M$ is Cohen-Macaulay.
    \item The $h$-vector of $\Delta_M$ is the $f$-vector of the independence complex of $M$.
    \item $\Delta_M$ contains a shellable subcomplex $\aNBC_M$, itself containing the no broken circuit complex of $M$, $\NBC(M)$. The $h$-vector of $\aNBC_M$ is equal to the $f$-vector of $\NBC(M)$.
\end{enumerate}
\end{theorem*}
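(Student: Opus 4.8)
The plan is to derive parts (2) and (3) together from a single assertion — that every linear extension $\prec$ of Las Vergnas's external/internal order is an $H$-shelling of $\Delta_M$, i.e.\ a shelling whose restriction sets run bijectively over the faces of a subcomplex — and then to read off (1) and (4) by comparing lists of facets. The approach mirrors the treatment of the external activity complex in \cite{ACS}, enhanced to keep track of the new variables $y_e$ and of the $H$-shelling. Write $F_I=x_{I\cup\EP(I)}\,y_Y\,z_{I\cup\EA(I)}$ for the facet indexed by the independent set $I=B\setminus Y$; since $|F_I|=|E|+r$ for every $I$, the complex is pure. For the shelling I take the restriction set
\[ \mathcal R(I):=\{\,z_e:e\in I\,\}, \]
and the goal is to show that, in the facet order induced by $\prec$, the faces of $\Delta_M$ first appearing when $F_I$ is adjoined are exactly those of the interval $[\mathcal R(I),F_I]=\{\sigma:\mathcal R(I)\subseteq\sigma\subseteq F_I\}$. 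By the standard restriction-set characterization of shellability \cite{bjorner} this makes $\prec$ a shelling; it is an $H$-shelling because $\{\mathcal R(I):I\text{ independent}\}=\{z_S:S\text{ independent}\}$ is a copy of the independence complex of $M$ carried on the $z$-vertices, and $I\mapsto\mathcal R(I)$ is the bijection from facets onto its faces. Since $|\mathcal R(I)|=|I|$, the number of facets with restriction set of cardinality $i$ equals the number of independent sets of size $i$, so $h_i(\Delta_M)$ equals the number of $(i-1)$-dimensional faces of the independence complex; this is (3), and shellable complexes are Cohen--Macaulay, which completes (2).

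Everything rests on identifying $[\mathcal R(I),F_I]$ with the set of new faces, and this is the step I expect to be the main obstacle. It amounts to two assertions: (i) if $\sigma\subseteq F_I$ and $z_e\notin\sigma$ for some $e\in I$, then $\sigma$ is contained in a facet $F_{I'}$ with $I'\prec I$ — for which it is enough to exhibit, for each $e\in I$, an index $I'\prec I$ with $F_I\setminus z_e\subseteq F_{I'}$; and (ii) no $I'\prec I$ has $\mathcal R(I)\subseteq F_{I'}$, equivalently $F_I$ is the $\prec$-earliest facet among those $F_{I'}$ with $I\subseteq I'\cup\EA(I')$. Both follow from one structural fact: the block of the partition that contains an arbitrary face $\sigma$ is determined by its $z$-support $Z=\{e:z_e\in\sigma\}$, namely the block of the independent set $I$ obtained by running the greedy algorithm on $Z$ (scan the elements of $Z$ in increasing order, keeping each one that preserves independence), because every skipped element is the maximum of its fundamental circuit over the retained set $I$ and hence lies in $\EA(I)$; one must also check that this $I$ satisfies $\sigma\subseteq F_I$, which is where the coupling of the $z$-part with the $x$- and $y$-parts of $F_I$ enters. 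To turn this into (i) and (ii) I would first record, generalizing \cite{ACS}, the minimal non-faces of $\Delta_M$ — obtained by the same bookkeeping with circuits and cocircuits that describes the external activity complex, now also involving the $y_e$ — and then describe how $\EA(I)$, $\EP(I)$, and the ambient basis $B$ of $I$ vary along the covering relations of Las Vergnas's order, which is what lets one produce, given $\sigma\subseteq F_I\setminus z_e$, an explicit earlier facet containing it. The delicate point — and the place where the features of $\Delta_M$ beyond the external activity complex of \cite{ACS} must be confronted — is verifying that the $y$-part contributes nothing to $\mathcal R(I)$: that deleting a variable $y_f$, an $x_f$, or a $z_f$ with $f\in\EA(I)$ from $F_I$ always lands in an earlier facet, with no correction term appearing when $I$ passes between a basis and one of its proper independent subsets.

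Granting the $H$-shelling, part (1) is immediate from the definition of the external activity complex of \cite{ACS} recalled earlier: its facets are exactly the facets $F_B$ of $\Delta_M$ with $Y=\0$ once the external-activity variables there are renamed to $z_e$, so that complex is the subcomplex of $\Delta_M$ generated by $\{F_B:B\text{ a basis}\}$ and in particular embeds in $\Delta_M$. For (4) I define $\aNBC_M$ to be the subcomplex of $\Delta_M$ generated by the facets $F_I$ with $I$ a no-broken-circuit set — equivalently, an independent set with $\EA(I)=\0$, for which $\EP(I)=E\setminus I$ and hence $F_I=x_E\,y_Y\,z_I$. Since the faces of $\NBC(M)$ are precisely the nbc sets, the $z$-supported subcomplex $\{z_S:S\text{ nbc}\}$ of $\aNBC_M$ is a copy of $\NBC(M)$, which gives the asserted containment. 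Every facet of $\aNBC_M$ contains all the $x_e$, so $\aNBC_M$ is a cone over the full simplex on the $x$-vertices and its shellability and $h$-vector coincide with those of the link of those vertices; running the argument above on it — with $\prec$ now the restriction of Las Vergnas's order to nbc sets and with the same restriction sets $\mathcal R(I)=\{z_e:e\in I\}$, which here is transparent since the $z$-part of $F_I$ is exactly $I$, so the block of a face is read directly off its $z$-support — exhibits $\prec$ as an $H$-shelling of $\aNBC_M$ whose restriction complex is the copy of $\NBC(M)$ on the $z$-vertices. As in the derivation of (3), this gives $h(\aNBC_M)=f(\NBC(M))$.
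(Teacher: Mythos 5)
Your high‑level plan is sound: Björner's restriction‑set criterion reduces shellability of a pure complex to showing, for each facet $F_I$, that the set of faces of $F_I$ appearing in earlier facets is exactly $\{G\subset F_I : z_I\not\subset G\}$, and your items (i) and (ii) are the right two halves of that. Item (ii) is essentially \cref{prop:RestrictionSets}(1) of the paper and is provable by a short argument with $\leq_{ext/int}$. But item (i) is where all the work lives, and it is not done. You propose running a greedy algorithm on $Z = \supp_z(F_I\setminus z_e)$ and taking the resulting independent set $I'$ as the witness, with only the promissory remark that ``one must also check that this $I$ satisfies $\sigma\subseteq F_I$'' and a later acknowledgement that verifying the $x$‑ and $y$‑parts is ``the delicate point.'' That delicate point is precisely the theorem. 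Concretely, after running greedy one must show three things that the greedy description does not give for free: (a) $\EA(I')\subset\EA(I)$, so the $x$‑support of $F_{I'}$ is large enough; (b) writing $I=C\setminus Y$ and $I'=B'\setminus Y'$ with $Y\subset\IA(C)$, $Y'\subset\IA(B')$, one needs $Y\subset Y'$, so the $y$‑support is large enough; and (c) $I'<_{ext/int}I$. Of these, (c) follows from $\IP(B')=\IP(I')\subset I'\subset Z\subset C\cup\EA(C)$, so $\IP(B')\cap\EP(C)=\emptyset$; but (a) and (b) are genuinely nontrivial and nothing in your outline addresses them. They are exactly what the paper's \cref{lem:acs}, \cref{lem:acs2.0}, and \cref{lem:case2constructJ} establish, and those lemmas in turn need the external/internal order to be extended to all independent sets (\cref{def:extint}), which your proposal does not discuss at all.

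For comparison, the paper does not go through restriction sets to prove shellability; it verifies the definition of a shelling directly, producing for each pair $I\prec K$ a witness $J$ via a two‑case analysis (internally related versus not), with the non‑related case leaning on \cite[Lemma~4.2]{ACS} and the new \cref{lem:acs2.0} showing $\IA(C)\subset\IA(B)$. Only afterward does \cref{prop:RestrictionSets} compute the restriction sets, and \cref{cor:H'} upgrades this to property $(H)$. Parts (1), (3), and (4) of your outline match the paper (your $\aNBC_M$ keeps the cone points $E(x)$ that the paper discards, which is harmless for the $h$‑vector and the containment of $\NBC(M)$), and your identification of the restriction complex with the independence complex is the right target. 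The gap is the unverified greedy claim: it may well be true, and it would be a genuinely different and perhaps slicker route than the paper's case analysis, but as written it asserts the conclusion of the hard lemmas rather than proving them.
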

While our results might at first appear to be a modest extension of the results of \cite{ACS}, it is worth emphasizing a main difficulty in their genesis: simply coming up with a definition.
\subsection{Geometric Motivation}\label{ssec:motivation}
We include here the geometric motivation for our definition and results. This material is not needed to understand the combinatorics in the body of our paper, and can be skipped if desired.  In spite of this, we feel it is important to emphasize that our complex $\Delta_M$ was not divined out of the ether or was an obvious generalization of \cite{ACS}, but had a concrete and interesting geometric origin. We will not prove the results stated here, reserving these for a future work. 

The Schubert variety of a linear space $L \subset \mathbf{C}^n$ is obtained by embedding $\mathbf{C}^n \subset (\mathbf{P}^1)^n$ and taking the closure of $L$ in $(\mathbf{P}^1)^n$. The resulting variety $Y_L$ is multiplicity free in the sense that its multidegree, which is a polynomial in $t_1,\dots,t_n$, is the basis generating function of the matroid $M$ of $L$. By results of Brion \cite{brion} this implies $Y_L$ is normal, has rational singularities and is Cohen-Macaulay. It also implies that $Y_L$ flatly deforms to a reduced union of Schubert varieties in $(\mathbf{P}^1)^n$. 

Let $\hat{Y}_L \subset (\mathbf{C}^2)^n$ be the multiaffine cone of $Y_L$. Viewing $(\mathbf{C}^2)^n$ as $\mathbf{C}^n \times \mathbf{C}^n$ affords a different description of $\hat Y_L$, coming from the tautological bundles of linear spaces in \cite{best}: $\hat Y_L$ is obtained from the sum of tautological bundles $\mathcal{S}_L \oplus \mathcal{O}(-\beta)$ on the permutohedral variety $\underline{X_n}$ as 
\[
\begin{tikzcd}
\mathcal{S}_L \oplus \mathcal{O}(-\beta) \arrow{r} \arrow{d} &  \mathbf{C}^n \times \mathbf{C}^n \times \underline{X_n} \arrow{d}\\
\hat Y_L  \arrow{r} & \mathbf{C}^n \times \mathbf{C}^n
\end{tikzcd}
\]
Here the horizontal arrows are inclusions and the vertical arrows are projections. That is, we may define $\hat Y_L$ to be the image of the bundle under the projection mapping. The computation of the $\mathbf{Z}^2$-graded multidegree in \cite{ardilaboocher}, for example,  now becomes a computation involving the Segre classes of $\mathcal{S}_L$ and $\mathcal{O}(-\beta)$, which is done in \cite{best} and yields (essentially) $T_M(q,1)$. From this point of view, it is natural to extend the construction of the Schubert variety of a linear space in any number of ways by varying the bundle. The main ingredient is a subbundle of a trivial bundle over a smooth projective variety.

Our complex $\Delta_M$ comes from changing the permutohedral variety $\underline{X_n}$ to the stellahedral variety $X_n$, and modifying the bundles involved to be the augmented tautological bundles defined in \cite{EHL}. Specifically, we use the bundle $\mathcal{E}_L:=\mathcal{Q}_L^\vee \oplus \pi^*_E(\mathcal{O}_{\mathbf{P}^E}(1))^\vee$, which is a subbundle of a trivial bundle with fiber $\mathbf{C}^n \times \mathbf{C}^n \times \mathbf{C}^{n+1}$. Applying \cite[Theorem~1.11]{EHL}, we see that the subvariety $\hat E_L$ of $\mathbf{C}^n \times \mathbf{C}^n \times \mathbf{C}^{n+1}$ produced by $\mathcal{E}_L$ will have degree equal to the number of independent subsets of the matroid $M$ of $L$. We may view $\hat E_L$ as the multi-affine cone over a subvariety $E_L \subset (\mathbf{P}^1)^n \times \mathbf{P}^n$. It is immediate from the definition and the main theorem of \cite{binglin} that $E_L$ is multiplicity free, and that its $\mathbf{Z}^n$  multidegree is the generating function for the independent sets of $M$. We can thus apply the same results of Brion above, and conclude that $E_L$ has rational singularities, is Cohen-Macaulay and flatly deforms to a reduced union of Schubert varieties.

Armed with these results, one hopes that $\hat E_L$ has a Gr\"obner deformation (which is flat) to a Cohen-Macaulay simplicial complex that only depends on the matroid of $M$. This is indeed what happens and our complex $\Delta_M$ is the result of a careful analysis of the initial ideal under an appropriate term order. Implicit in this is  the result that the initial ideal of $\hat E_L$ only depends on the matroid $M$, and so we are able to give a definition that makes no reference to the geometric motivators of linear spaces, tautological bundles or varieties. As a consequence, our main theorem is entirely combinatorial. It is a natural strengthening of the statement that $E_L$ is Cohen-Macaulay.

The proof of our main theorem builds upon the techniques used in \cite{ACS}, which involves an intricate analysis of Las Vergnas's external/internal order on the bases of a matroid. Our complex $\Delta_M$ contains the external activity complex of $M$ as a subcomplex and the argument we employ reduces to the argument in \cite{ACS} when restricted to this subcomplex. However, we need to extend Las Vergnas's result to define an external/internal order on all independent sets, and extra care must be taken here as new obstacles are encountered. 

\subsection{Organization} The structure of our paper is as follows: In \cref{sec:act,sec:shell} we review known material on matroid activities and shellings. In \cref{sec:acs} we review the construction and results of \cite{ACS}, highlighting the results we will need later. In \cref{sec:augea} we define the augmented external activity complex and prove our main result on its shellability. We include a fully worked  example to aid the reader in following the delicacies of our proof. In \cref{sec:restriction} we describe the restriction sets of our shellings, as well as a related shelling which produces a two-variable shelling polynomial. We use our knowledge of the restriction sets to describe the $h$-vector of our complex and to prove that our shellings are a special kind of shelling called an $H$-shelling, studied previously in \cite{ER}. Finally, in \cref{sec:nbc} we define an augmented analogue of the no broken circuit complex of a matroid, proving results parallel to those in \cref{sec:augea,sec:restriction}.
%We can say "...special kind of complex called an $H$-complex studied..." this is the stronger one. lmk.

\subsection*{Acknowledgements} The authors would like to thank the Kennerud Visiting Math Scholars Fund at Western Washington University for helping facilitate the nascent stages of this project. AB would like to thank Colin Crowley for useful discussions.

\section{Matroid Activities}\label{sec:act}
A matroid $M$ is a pair $(E,\mathcal{I})$ where $E$ is a finite set and $\mathcal{I} = \mathcal{I}(M)$ is a simplicial complex on $E$ satisfying the following axiom:
\begin{center}
    \textit{if $I,J \in \mathcal{I}$ and $|I|<|J|$, then there is $e \in J \setminus I$ with $I \cup \{e\} \in \mathcal{I}$}.
\end{center}
The sets in $\mathcal{I}$ are the independent sets of $M$ and the facets of $\mathcal{I}$ are the bases of $M$. We denote the collection of bases of $M$ by $\mathcal{B} = \mathcal{B}(M)$. We refer the reader to the book of Oxley \cite{oxley} for basic elements of matroid theory. We will recall further, less basic, elements of the theory below.

\subsection{Activities}\label{ssec:activitites} We recall the definitions for matroid external and internal activities, following Las Vergnas \cite{LV}. We abuse notation slightly in this section and all that follow, by writing the union (and set difference) of a set $B$ with a singleton set $\{ e \}$ by suppressing the set braces for the singleton set. That is, we denote $B\cup \{ e \} := B \cup e$ and similarly we denote $B\setminus \{ e \}:= B\setminus e$.

%writing the union of disjoint sets $A$ and $B$ as $A+B$, set difference as $A-B$, and suppress set braces for singletons.

Let $M$ be a matroid on $E$ and fix once and for all an arbitrary linear order $<$ on $E$.
\begin{remark}
    There are three orders that appear in our paper that may be difficult to keep track of on a first reading. The first is the one just introduced, $<$, which is a total order on $E$, the ground set of our matroid. The second is $\leq_{ext/int}$, which will first be a partial order on the bases and then extended to a partial order on the independent sets of $M$. Finally, there will be a linear extension $\prec$ of $\leq_{ext/int}$. Which order is being used will make it clear what types of objects are being compared. 
\end{remark}
%We also have \leq'_{ext/int} and \prec' but I am okay being mysterious about these last ones. =)
\begin{definition}
Let $S$ be a subset of $E$. An element $e\in E \setminus S$ is called \emph{externally active} with respect to $S$ (and $M$) if there exists a circuit $\gamma$ of $M$ contained in $S \cup e$ so that $e$ is the maximum element in $\gamma$. Otherwise, $e$ is called \emph{externally passive} with respect to $S$ (and $M$). 
Denote the set of elements that are externally active (externally passive) with respect to $S$ by $\EA(S)$ (resp., $\EP(S)$). 
\end{definition}
Typically we will not say ``externally active for $S$ and $M$'', but only ``externally active for $S$''. However, we need the enhanced emphasis on $M$ to make the next definition.
\begin{definition}
 An element $i\in S$ is called \emph{internally active} with respect to $S$ (and $M$) if $i$ is externally active with respect to $E\setminus S$ and $M^\perp$, where $M^\perp$ is the dual matroid with the same order on the ground set. Otherwise $i \in S$ is called \emph{internally passive} with respect to $S$ (and $M$).
Denote the set of elements that are internally active (internally passive) with respect to $S$ by $\IA(S)$ (resp., $\IP(S)$).
\end{definition} 

For a basis $B$ of $M$, note that $e \notin B$ is externally active with respect to $B$ if and only if there is no element $e' >e$ with $e' \in B$ and $B\setminus e' \cup e$ a basis of $M$. Dually, $e \in B$ is internally active with respect to $B$ if and only if there is no element $e' >e$ with $e' \notin B$ and $B\setminus e \cup e'$ a basis of $M$. 

The following result of Crapo was used in his definition of the Tutte polynomial of a matroid.
\begin{proposition}\label{prop:crapo}\cite{crapo} Let $M$ be a matroid on the ground set $E$ and let $<$ be a linear order on $E$.
\begin{enumerate} 
\item Every subset $S$ of $E$ can be uniquely written in the form $S = B\setminus Y \cup X$ for some basis
$B$, with $X \subset \EA(B)$ and $Y \subset \IA(B)$. Equivalently, the intervals
$[B\setminus \IA(B),B\cup \EA(B)]$ form a partition of the poset of subsets of $E$ ordered by inclusion.
\item Every independent set $I$ of $E$ can be uniquely written in the form $I = B\setminus Y$ for some basis $B$ and some subset $Y \subset \IA(B)$. Equivalently, the intervals $[B\setminus \IA(B),B]$ form a partition of the independence complex, $\mathcal{I}(M)$.
\end{enumerate}
\end{proposition}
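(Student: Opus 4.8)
The plan is to prove part (1) --- the decomposition for arbitrary subsets --- by an explicit greedy construction, and then to deduce part (2) as a short corollary. Given $S \subseteq E$, I would build the associated basis $B = B(S)$ in two phases. \emph{Phase one:} scan the elements of $S$ in increasing order with respect to $<$, maintaining an independent set of $M$, and adjoin the current element whenever this keeps the set independent; this yields the lexicographically smallest basis $D$ of the restriction $M|_S$. \emph{Phase two:} starting from $D$, scan the elements of $E \setminus S$ in \emph{decreasing} order and again greedily adjoin elements that preserve independence in $M$. Since $D$ spans $S$, the result is a basis $B$ of $M$, and because phase two only adds elements outside $S$ we have $B \cap S = D$. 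I claim this $B$ is the unique basis with $B \setminus \IA(B) \subseteq S \subseteq B \cup \EA(B)$; the decomposition is then $S = B \setminus Y \cup X$ with $X := S \setminus B \subseteq \EA(B)$ and $Y := B \setminus S \subseteq \IA(B)$.

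For the inclusion $S \subseteq B \cup \EA(B)$: if $e \in S \setminus B$, then when $e$ was scanned in phase one it was already spanned by the elements chosen before it, all of which are $< e$ and lie in $B \cap S$; hence the fundamental circuit $C(e,B)$ is contained in $\{f \in B \cap S : f < e\} \cup e$, so $e = \max C(e,B)$ and $e \in \EA(B)$. For $B \setminus \IA(B) \subseteq S$, suppose $f \in B \setminus S$ is internally passive, so there is $g > f$ with $g \notin B$ and $(B \setminus f) \cup g$ a basis, equivalently $f \in C(g,B)$. If $g \in S$, then $g$ is spanned by $B \cap S$, forcing $C(g,B) \subseteq (B \cap S) \cup g$ and hence $f \in B \cap S$, contradicting $f \notin S$. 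If $g \in E \setminus S$, then $g$ was scanned before $f$ in phase two and rejected, so $g$ is spanned by $(B \cap S) \cup \{h \in B \setminus S : h > g\}$; thus $C(g,B)$ contains no element of $B \setminus S$ that is $\le g$, contradicting $f \in C(g,B)$ with $f \in B \setminus S$ and $f < g$. This proves existence. The step I expect to be the real obstacle is exactly this case analysis: one must translate the greedy choices into the activity conditions using the orthogonality $f \in C(g,B) \iff g \in C^\ast(f,B)$ between fundamental circuits and cocircuits, and keep careful track of which side of the $S$ / $E\setminus S$ divide each exchanged element lies on.

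For uniqueness, let $B$ be any basis with $S \setminus B \subseteq \EA(B)$ and $B \setminus S \subseteq \IA(B)$. First, $B \cap S$ is determined: for $e \in S \setminus B$ we have $e = \max C(e,B)$, and if some $f \in C(e,B) \setminus e$ lay outside $S$ then $f \in B \setminus S \subseteq \IA(B)$ would give $f = \max C^\ast(f,B) \ni e$, so $f > e$, a contradiction; hence $C(e,B) \setminus e$ consists of elements of $B \cap S$ smaller than $e$, which characterizes $B \cap S$ as the greedy basis of $M|_S$ (unique, being the lexicographically smallest basis). The dual argument applied to cocircuits then shows that, with $B \cap S$ fixed, every $g \in (E \setminus S) \setminus B$ is spanned by $(B \cap S) \cup \{h \in B \setminus S : h > g\}$, which characterizes $B$ as the phase-two extension. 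So $B$ is unique, and (1) follows. (This is essentially Crapo's original argument.)

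Finally, part (2) follows quickly from (1). Given an independent set $I$, write $I = B \setminus Y \cup X$ as in (1); then $B \setminus I = Y$ and $X \cap B = \0$. If $X \neq \0$, pick $e \in X \subseteq \EA(B)$; as in the uniqueness argument, any $f \in C(e,B) \setminus e$ lying outside $I$ would be in $B \setminus I = Y \subseteq \IA(B)$, forcing $f > e$ against $e = \max C(e,B)$, so $C(e,B) \subseteq I$ --- a circuit inside the independent set $I$, which is absurd. Hence $X = \0$, so $I = B \setminus Y$ with $Y \subseteq \IA(B)$, and uniqueness is inherited from (1); conversely each $B \setminus Y$ with $Y \subseteq \IA(B)$ is contained in the basis $B$, hence independent. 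Therefore the intervals $[B \setminus \IA(B), B]$ partition $\mathcal{I}(M)$.
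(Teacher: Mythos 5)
Your proof is correct; note that the paper itself offers no argument for this proposition --- it is quoted from Crapo \cite{crapo} (see also \cite{bjorner}) --- so there is no in-paper proof to compare against, and what you have written is a genuine, self-contained verification. Your route, the two-phase greedy construction (the lexicographically smallest basis of $M|_S$, then a reverse-greedy completion over $E\setminus S$), together with the translation of each greedy rejection into $e=\max C(e,B)$ for $e\in S\setminus B$ and of $f\in B\setminus S$ being internally active into $f=\max C^*(f,B)$, is essentially the classical proof of Crapo's interval partition, and your deduction of part (2) from part (1) --- a nonempty $X$ would force the circuit $C(e,B)$ to lie inside the independent set $I$ --- is clean and correct. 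The only places where you assert rather than prove are the two ``characterization'' steps in the uniqueness argument: that a basis $D'$ of $M|_S$ in which every $e\in S\setminus D'$ is spanned by $\{f\in D':f<e\}$ must be the greedy basis, and the dual claim pinning down the phase-two extension. Both are true and each follows from a one-line rank count: the hypothesis forces $|\{f\in D':f\le e\}|=r(\{s\in S:s\le e\})$ for every $e$, so membership in $D'$ is read off from the rank function and $D'$ coincides with the greedy basis; similarly $|\{h\in B\setminus S:h\ge g\}|=r\bigl(D\cup\{t\in E\setminus S:t\ge g\}\bigr)-|D|$ for every $g\in E\setminus S$ determines $B\setminus S$ and identifies it with the phase-two output. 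With those two sentences added, your argument is complete.
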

The second item here will be particularly important for us.
\subsection{Active orders for matroid bases}\label{ssec:activeorders}
In his study of matroid activities Las Vergnas introduced three partial orders on the collection of bases of $M$. We summarize a few of his results here, taking them as definitions.
\begin{definition}\cite[Proposition 3.1]{LV} We define the \emph{external order} on $\mathcal{B}(M)$ by the following equivalent conditions
\begin{enumerate}
\item $A\leq_{ext}B$
\item $A\subset B \cup \EA(B)$
\item $A\cup \EA(A) \subset B \cup \EA(B)$
\end{enumerate}
\end{definition}
Dually, we have the following.
\begin{definition}\cite[Proposition 5.2]{LV}\label{LVint} We define the \emph{internal order} on $\mathcal{B}(M)$ by the following equivalent conditions
\begin{enumerate}
\item $A\leq_{int}B$
\item $A\setminus \IA(A)\subset B$
\item $A\setminus \IA(A) \subset B \setminus \IA(B)$
\end{enumerate}
\end{definition}
The external/internal order on bases is the weakest order that simultaneously
extends the external and internal orders.
\begin{definition}
\cite[Proposition 6.3]{LV}\label{LVextint} We define the \emph{external/internal order} on $\mathcal{B}(M)$ by the following equivalent conditions
\begin{enumerate}
\item $A\leq_{ext/int}B$
\item $\IP(A)\cap \EP(B) = \emptyset$
\item $A\setminus \IA(A)\cup \EA(A) \subset B \setminus \IA(B) \cup \EA(B)$
\item $\IP(A) \cup \EA(A) \subset \IP(B) \cup \EA(B)$
\end{enumerate}    
\end{definition}
\begin{example}\label{ex:running1} Consider the following running example. Let $M$ be the matroid on $E = \{1,2,3,4,5\}$ (with the natural order) realized by the affine point configuration below.
\[
\begin{tikzpicture}
\node at (0,1) [above] {$1$};
\node at (-.35,.5) [left] {$2$};
\node at (-.7,0) [left] {$3$};
\node at (.35,.5) [right] {$4$};
\node at (.7,0) [right] {$5$};
\draw[fill=black] (0,1) circle (2pt);
\draw[fill=black] (-.35,.5) circle (2pt);
\draw[fill=black] (-.7,0) circle (2pt);
\draw[fill=black] (.35,.5) circle (2pt);
\draw[fill=black] (.7,0) circle (2pt);
\draw (-.7,0)-- (-.35,.5)--(0,1)--(.35,.5)--(.7,0);
\end{tikzpicture}
\]
We summarize here the bases along with their activities.
\[
\begin{array}{c|cccc}
B& \EA(B) & \EP(B) & \IA(B) & \IP(B) \\
\hline
345 & \0 & 12 & 345 & \0  \\
135 & \0 & 24 & 35 & 1  \\
245 & \0 & 13 & 45 & 2  \\
235 & \0 & 14 & 5 & 23  \\
125 & 3 & 4 & 5 & 12  \\
134 & 5 & 2 & 3 & 14 \\
234 & 5 & 1 & \0 & 234 \\
124 & 35 & \0 & \0 & 124 \\
\end{array}
\]
The Hasse diagrams of the external and internal orders on $\mathcal{B}(M)$ are shown at left and right below.
\[\begin{tikzpicture}[scale=1]
\node (345) at (-3,0) {$345$};
\node (135) at (-1,0) {$135$};
\node (235) at (3,0) {$235$};
\node (245) at (1,0) {$245$};
\node (2345) at (0,1.5) {$234$};
\node (1345) at (-2,1.5) {$134$};
\node (1235) at (2,1.5) {$125$};
\node (12345) at (0,3) {$124$};
\draw (345)--(1345)--(12345);
\draw (135)--(1345);
\draw (135)--(1235);
\draw (235)--(1235)--(12345);
\draw (235)--(2345);
\draw (345)--(2345);
\draw (245)--(2345)--(12345);
\end{tikzpicture}
\hspace{1cm}
\begin{tikzpicture}[scale=1]
\node (0) at (0,0) {$345$};
\node (1) at (-1,1) {$135$};
\node (2) at (1,1) {$245$};
\node (23) at (1,2) {$235$};
\node (12) at (0,2) {$125$};
\node (14) at (-2,2) {$134$};
\node (234) at (1,3) {$234$};
\node (124) at (-1,3) {$124$};
\draw (0)--(1)--(12)--(124);
\draw (1)--(14)--(124);
\draw (0)--(2)--(23)--(234);
\draw (2)--(12)--(124);
\end{tikzpicture}
\]
The external/internal  order on $\mathcal{B}(M)$ refines both of these, and its Hasse diagram is shown below.
\[
\begin{tikzpicture}[scale=1]
\node (0) at (0,0) {$345$};
\node (1) at (-1,1.5) {$135$};
\node (2) at (1,1) {$245$};
\node (23) at (1,2) {$235$};
\node (123) at (0,3) {$125$};
\node (145) at (-2,3) {$134$};
\node (2345) at (2,3) {$234$};
\node (12345) at (0,4.5) {$124$};
\draw (0)--(1)--(123)--(12345);
\draw (1)--(145)--(12345);
\draw (0)--(2)--(23)--(123);
\draw (23)--(2345)--(12345);
\end{tikzpicture}
\]
\end{example}

\subsection{Active orders for independent sets}
We wish to extend Las Vergnas's external/internal order to the collection of independent sets of a matroid. To do this, we associate each independent set to a basis as in \cref{prop:crapo} and consider external activity and internal passivity.  

\begin{definition}
We say that independent sets $I$ and $J$ of $M$ are \emph{internally related} if there exists a basis $B$ of $M$ with $I=B\setminus Y_I$ and $J=B\setminus Y_J$ where $Y_I,Y_J\subset \IA(B)$. When no confusion will arise, we will sometimes simply say $I$ and $J$ are \emph{related}.
\end{definition}
Note that by \cref{prop:crapo}(2) every independent set is related to a unique basis and every basis is related to itself. The following result is immediate from the definition.
\begin{proposition}\label{prop:delete}
Suppose that $I$ is related to a basis $B$ and $Y\subset \IA(B)$. Then $I\setminus Y$ is related to $B$.

\end{proposition}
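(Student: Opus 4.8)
The plan is to reduce the statement to the set-theoretic identity $I\setminus Y = B\setminus(Y_I\cup Y)$ together with the trivial observation that a union of two subsets of $\IA(B)$ is again a subset of $\IA(B)$.

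First I would make the hypothesis precise. By \cref{prop:crapo}(2) the independent set $I$ has a unique expression $I = B'\setminus Y_I$ with $B'$ a basis and $Y_I\subseteq \IA(B')$. Saying that $I$ is related to the basis $B$ means there is a common basis witnessing the relation between $I$ and $B$ in the sense of the definition; since $B$ is itself a basis, any basis $B''$ with $B = B''\setminus Y_B$ and $Y_B\subseteq\IA(B'')$ has $|B''|=|B|$, so $Y_B=\0$ and $B''=B$. Hence the hypothesis is equivalent to $B'=B$, i.e.\ $I = B\setminus Y_I$ for some $Y_I\subseteq\IA(B)$.

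Next comes the one-line computation: $I\setminus Y = (B\setminus Y_I)\setminus Y = B\setminus(Y_I\cup Y)$, and since $Y_I\subseteq\IA(B)$ by the previous paragraph and $Y\subseteq\IA(B)$ by hypothesis, we have $Y_I\cup Y\subseteq\IA(B)$. Taking the common basis in the definition of internally related to be $B$ itself (with $\0\subseteq\IA(B)$ witnessing $B$ and $Y_I\cup Y\subseteq\IA(B)$ witnessing $I\setminus Y$) shows that $I\setminus Y$ and $B$ are internally related, as desired. Equivalently, one can phrase this through the partition of \cref{prop:crapo}(2): from $B\setminus\IA(B)\subseteq I\setminus Y\subseteq I\subseteq B$ one sees that $I\setminus Y$ lies in the block $[B\setminus\IA(B),B]$, which is precisely the set of independent sets related to $B$.

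I do not expect a genuine obstacle here — the result is essentially a restatement of \cref{prop:crapo}(2). The only point that merits a moment's attention is that $Y$ is only assumed to satisfy $Y\subseteq\IA(B)$ and need not be contained in $I$; but this causes no trouble, since the identity $I\setminus Y = B\setminus(Y_I\cup Y)$ holds irrespective of whether $Y\subseteq I$.
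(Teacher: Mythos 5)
Your proof is correct and fills in precisely the routine computation that the paper elides (the paper declares the proposition ``immediate from the definition'' without supplying a proof). The identity $I\setminus Y = B\setminus(Y_I\cup Y)$ with $Y_I\cup Y\subseteq\IA(B)$, together with the observation that the only basis $I$ can be related to is $B$ itself, is exactly the intended argument.
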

The following result describes how the activities of independent sets are determined by their related bases. We will use it frequently.
\begin{proposition}[{\cite[Proposition~2.4(v)]{LV}}]\label{prop:lvprop}
Let $I$ be an independent set of $M$ related to the basis $B$. Then $\EA(I) = \EA(B)$ and $\IP(I) = \IP(B)$. Moreover, \[I\setminus \IA(I) \cup \EA(I)=B\setminus \IA(B) \cup \EA(B).\]
\end{proposition}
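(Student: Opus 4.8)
The plan is to prove the two set equalities $\EA(I)=\EA(B)$ and $\IP(I)=\IP(B)$ separately; the displayed identity then follows at once, since $\IP(S)=S\setminus\IA(S)$ for every set $S$, so that $I\setminus\IA(I)\cup\EA(I)=\IP(I)\cup\EA(I)=\IP(B)\cup\EA(B)=B\setminus\IA(B)\cup\EA(B)$. Throughout I write $I=B\setminus Y$ with $Y\subseteq\IA(B)$, which is the meaning of $I$ being related to $B$ by \cref{prop:crapo}(2). I will also use freely the standard description of the fundamental circuit: for a basis $C$ and $e\notin C$, the set $C\cup e$ contains a unique circuit $C(e,C)$, and $C(e,C)=\{e\}\cup\{\,c\in C:C\setminus c\cup e\in\mathcal B\,\}$.

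For $\EA(I)=\EA(B)$ I would argue the two inclusions. If $e\in\EA(I)$, pick a circuit $\gamma\subseteq I\cup e$ with $e=\max\gamma$; since $\gamma$ is dependent and $B$ independent we cannot have $e\in B$ (else $\gamma\subseteq B$), so $\gamma\subseteq B\cup e$ shows $e\in\EA(B)$. Conversely, let $e\in\EA(B)$, so $C(e,B)\subseteq B\cup e$ and $e=\max C(e,B)$; the key point is that $C(e,B)\cap Y=\emptyset$. Indeed, if $y\in Y\cap C(e,B)$ then $y\in B$ and $B\setminus y\cup e\in\mathcal B$, while $e\notin B$; as $y\in\IA(B)$, the definition of internal activity forbids any $e'>y$ outside $B$ with $B\setminus y\cup e'$ a basis, forcing $e<y$, which contradicts $y\le\max C(e,B)=e$. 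Hence $C(e,B)\subseteq(B\setminus Y)\cup e=I\cup e$ and $e\in\EA(I)$.

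For $\IP(I)=\IP(B)$ I would first prove the following lemma and then dualize. \emph{Lemma: for any matroid $N$, any basis $C$ of $N$, and any $X\subseteq\EA_N(C)$, one has $\EA_N(C\cup X)=\EA_N(C)\setminus X$.} One inclusion is easy: if $e\in\EA_N(C)\setminus X$ then $e\notin C\cup X$ and $C(e,C)\subseteq C\cup e\subseteq(C\cup X)\cup e$ witnesses $e\in\EA_N(C\cup X)$. For the other, take $e\in\EA_N(C\cup X)$; then $e\notin X$, and among the circuits $\gamma$ with $e\in\gamma\subseteq C\cup X\cup e$ and $\max\gamma=e$ choose one minimizing $|\gamma\cap X|$. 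If some $x\in\gamma\cap X$ existed, then $x<e$ and, since $x\in\EA_N(C)$, we have $\max C(x,C)=x$ and $e\notin C(x,C)$; circuit elimination applied to $\gamma$ and $C(x,C)$ at $x$, keeping $e$, yields a circuit $\gamma'$ with $e\in\gamma'\subseteq(\gamma\cup C(x,C))\setminus x$, still with $\max\gamma'=e$ (as $\max C(x,C)=x<e$) and with $\gamma'\cap X\subseteq(\gamma\cap X)\setminus x$ because $X\cap C=\emptyset$; this contradicts minimality. So $\gamma\cap X=\emptyset$, giving $\gamma\subseteq C\cup e$ and $e\in\EA_N(C)$. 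To conclude, apply the lemma with $N=M^\perp$, $C=E\setminus B$ (a basis of $M^\perp$) and $X=Y$, which is legitimate since $Y\subseteq\IA_M(B)=\EA_{M^\perp}(E\setminus B)$; using $\IA_M(S)=\EA_{M^\perp}(E\setminus S)$ and $E\setminus I=(E\setminus B)\cup Y$, the lemma gives $\IA_M(I)=\IA_M(B)\setminus Y$, hence $\IP_M(I)=I\setminus\IA_M(I)=(B\setminus Y)\setminus(\IA_M(B)\setminus Y)=(B\setminus\IA_M(B))\setminus Y=\IP_M(B)$, the last equality because $Y\subseteq\IA_M(B)$.

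I expect the crux to be the nontrivial inclusion of the lemma—pruning an externally active witness for $C\cup X$ down to one living in $C\cup e$—where one must verify that the circuit-elimination step neither enlarges the intersection with $X$ nor destroys the property that $e$ is the maximal element; the minimality choice together with $X\cap C=\emptyset$ is exactly what makes this go through. The remaining arguments are routine manipulations with fundamental circuits and with the duality definition of internal activity.
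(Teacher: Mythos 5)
Your proof is correct. Note, however, that the paper does not prove this proposition at all: it is imported verbatim from Las Vergnas \cite[Proposition~2.4(v)]{LV}, so there is no in-paper argument to compare against; what you have written is a self-contained verification of the cited result. Checking your two main steps: the equality $\EA(I)=\EA(B)$ is handled correctly --- the forward inclusion is immediate from $I\subseteq B$, and for the reverse your key observation that $C(e,B)\cap Y=\emptyset$ is exactly right, since $y\in Y\cap C(e,B)$ would give $B\setminus y\cup e\in\mathcal{B}$ with $e\notin B$ and $e>y$ impossible by internal activity of $y$, while $e=\max C(e,B)$ forces $y<e$. For $\IP(I)=\IP(B)$, your lemma $\EA_N(C\cup X)=\EA_N(C)\setminus X$ for $X\subseteq\EA_N(C)$ is proved correctly: the minimality choice on $|\gamma\cap X|$, the fact that $\max C(x,C)=x<e$ keeps $e$ maximal after strong circuit elimination, and the disjointness $X\cap C=\emptyset$ guaranteeing $C(x,C)\cap X\subseteq\{x\}$ are precisely the points that need care, and you verify all of them. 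Dualizing with $N=M^\perp$, $C=E\setminus B$, $X=Y$ is legitimate because $\IA_M(B)=\EA_{M^\perp}(E\setminus B)$ by the paper's definition of internal activity, and the resulting identity $\IA_M(I)=\IA_M(B)\setminus Y$ (a useful refinement in its own right, also in Las Vergnas) yields $\IP(I)=\IP(B)$ since $Y\subseteq\IA_M(B)$. The reduction of the displayed identity to the two set equalities via $\IP(S)=S\setminus\IA(S)$ is fine.
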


The following definition extends the external/internal order on bases to all independent sets.
\begin{definition}\label{def:extint}
 Let $I$ and $J$ be independent sets of $M$. We say that $I\leq_{ext/int}J$ if and only if $I$ and $J$ are not internally related and $I\setminus \IA(I) \cup \EA(I) \subset J\setminus \IA(J) \cup \EA(J)$ or $I$ and $J$ are internally related and $I \subset J$.
\end{definition}
One checks that this is indeed a partial order. It is immediate that two bases $B$ and $C$ satisfy $B \leq_{ext/int} C$ in the previous sense if and only if they do in the current sense. Further, if an independent set $I$ is internally related to a basis $C$ then $I \leq_{ext/int} C$. More generally, we have the following.
\begin{proposition}
Let $I$ and $K$ be independent sets related to bases $A$ and $C$, respectively with $A\neq C$. We have that $I\leq_{ext/int}K$ if and only if $A\leq_{ext/int}C$.
\end{proposition}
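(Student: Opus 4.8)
The plan is a direct unwinding of the definitions, with \cref{prop:lvprop} serving as the bridge between independent sets and their related bases. First I would dispatch the ``internally related'' alternative in \cref{def:extint}: by \cref{prop:crapo}(2) every independent set is related to a \emph{unique} basis, so if $I$ and $K$ were internally related via a common basis $B$, then $B$ would be a basis related to $I$ and a basis related to $K$, forcing $B = A = C$ and contradicting $A \neq C$. Hence $I$ and $K$ are not internally related, and \cref{def:extint} reduces to the statement that $I \leq_{ext/int} K$ holds if and only if
\[ I\setminus \IA(I) \cup \EA(I) \subset K\setminus \IA(K) \cup \EA(K). \]

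Next I would apply \cref{prop:lvprop} to the related pairs $(I,A)$ and $(K,C)$, which gives $I\setminus \IA(I) \cup \EA(I) = A\setminus \IA(A) \cup \EA(A)$ and $K\setminus \IA(K) \cup \EA(K) = C\setminus \IA(C) \cup \EA(C)$. Substituting into the displayed containment shows it is equivalent to
\[ A\setminus \IA(A) \cup \EA(A) \subset C\setminus \IA(C) \cup \EA(C), \]
which is exactly condition (3) of \cref{LVextint} characterizing $A \leq_{ext/int} C$ on $\mathcal{B}(M)$ (and this agrees with the ``current sense'' of \cref{def:extint} applied to the distinct bases $A$ and $C$, since distinct bases are never internally related). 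Chaining these equivalences yields the claim in both directions at once.

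The only step needing any care — and it is slight — is the first one: verifying that the ``internally related'' branch of \cref{def:extint} cannot be triggered, so that the containment branch is the operative condition for both $(I,K)$ and $(A,C)$. This is precisely where the uniqueness in Crapo's partition \cref{prop:crapo}(2) enters; everything after that is substitution. One could equivalently run the argument through conditions (2) or (4) of \cref{LVextint}, using $\EA(I)=\EA(A)$ and $\IP(I)=\IP(A)$ from \cref{prop:lvprop}, but the $\IA$/$\EA$ formulation matches \cref{def:extint} most transparently.
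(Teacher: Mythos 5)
Your proof is correct, and since the paper states this proposition without a proof (evidently treating it as immediate), your argument is exactly the intended unwinding: uniqueness in \cref{prop:crapo}(2) shows $I$ and $K$ cannot be internally related when $A \neq C$, and then \cref{prop:lvprop} converts the containment condition in \cref{def:extint} into condition (3) of \cref{LVextint} for the related bases.
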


\begin{example}\label{ex:running2}
We continue \cref{ex:running1}. The Hasse diagram of the external/internal order on the independent sets of $M$ is shown below.
\[
\begin{tikzpicture}[scale=1.5]
\node (345) at (0,1) {$345$};
\node (45) at (.5,.5) {$45$};
\node (35) at (0,.5) {$35$};
\node (34) at (-.5,.5) {$34$};
\node (5) at (.5,0) {$5$};
\node (4) at (0,0) {$4$};
\node (3) at (-.5,0) {$3$};
\node (0) at (0,-.5) {$\emptyset$};
\draw (0)--(3)--(34)--(345);
\draw (0)--(5)--(45)--(345);
\draw (3)--(35);
\draw (4)--(34);
\draw (4)--(45);
\draw (5)--(35);
\draw (0)--(4);
\draw (35)--(345);
%135
\node (135) at (-1,3.25) {$135$};
\node (13) at (-1.5,2.75) {$13$};
\node (15) at (-.5,2.75) {$15$};
\node (1) at (-1,2.25) {$1$};
\draw (1)--(13)--(135);
\draw (1)--(15)--(135);
%245
\node (2) at (1,1.5) {$2$};
\node (25) at (.5,2) {$25$};
\node (24) at (1.5,2) {$24$};
\node (245) at (1,2.5) {$245$};
\draw (2)--(25)--(245);
\draw (2)--(24)--(245);
%235
\node (235) at (1,3.5) {$235$};
\node (23) at (1,3) {$23$};
\draw (23)--(235);
%134
\node (134) at (-2,5) {$134$};
\node (14) at (-2,4.5) {$14$};
\draw (14)--(134);
%125
\node (125) at (0,5) {$125$};
\node (12) at (0,4.5) {$12$};
\draw (12)--(125);
%234
\node (234) at (2,4.75) {$234$};
%124
\node (124) at (0,6) {$124$};
\draw (345)--(1);
\draw (345)--(2);
\draw (245)--(23);
\draw (135)--(14);
\draw (135)--(12);
\draw (235)--(12);
\draw (235)--(234);
\draw (134)--(124);
\draw (125)--(124);
\draw (234)--(124);
\end{tikzpicture}
\]
Note that in this example, we replace each basis $C$ in \cref{ex:running1} with a copy of a boolean lattice of sets of the form $\{S \cup \IP(C) : S \subset \IA(C)\}$.
\end{example}
The phenomenon of this example persists in general.
\begin{proposition}\label{prop:bool}
  Let $B$ and $C$ be bases of $M$ with $C$ covering $B$ in the external/internal order on bases. Then, in the external/internal order on independent sets the interval $(B,C]$ can be described as the boolean lattice, 
  \[(B,C] = \{ S\cup \IP(C): S \subset \IA(C) \}.
  \]
\end{proposition}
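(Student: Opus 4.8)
The plan is to show that the interval $(B,C]$ in the external/internal order on independent sets is precisely the set of independent sets internally related to the top basis $C$, and then to read off the boolean-lattice structure directly from \cref{def:extint}. The two propositions immediately preceding this one, together with \cref{prop:crapo}(2), do essentially all the work.

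First I would verify that every independent set $I$ related to $C$ lies in $(B,C]$. On one hand $I \leq_{ext/int} C$, since $I$ is internally related to $C$ (as noted after \cref{def:extint}). On the other hand, $B$ is related to $B$ and $I$ is related to $C \neq B$, so the proposition just before \cref{prop:bool} gives $B \leq_{ext/int} I$ if and only if $B \leq_{ext/int} C$; as $C$ covers $B$ we have $B <_{ext/int} C$, hence $B <_{ext/int} I$. By \cref{prop:crapo}(2) the independent sets related to $C$ are exactly the sets $C\setminus Y$ with $Y \subseteq \IA(C)$, and writing $S = \IA(C)\setminus Y$ and using the disjoint decomposition $C = \IA(C) \sqcup \IP(C)$ these are precisely the sets $S \cup \IP(C)$, $S \subseteq \IA(C)$. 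So this establishes $\{S \cup \IP(C) : S \subseteq \IA(C)\} \subseteq (B,C]$.

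For the reverse inclusion I would take $I$ with $B <_{ext/int} I \leq_{ext/int} C$, let $A$ be the basis to which $I$ is related, and argue $A = C$. If $A = B$, then $I$ is internally related to $B$, so $I \leq_{ext/int} B$, contradicting $B <_{ext/int} I$. Hence $A \neq B$, and applying the preceding proposition to $I$ and $B$ converts $B <_{ext/int} I$ into $B <_{ext/int} A$. If in addition $A \neq C$, then applying the preceding proposition to $I$ and $C$ converts $I \leq_{ext/int} C$ into $A \leq_{ext/int} C$, in fact $A <_{ext/int} C$ since $A \neq C$; but then $B <_{ext/int} A <_{ext/int} C$ contradicts the assumption that $C$ covers $B$. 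Therefore $A = C$, i.e. $I$ is related to $C$, which gives $(B,C] \subseteq \{S \cup \IP(C) : S \subseteq \IA(C)\}$.

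Finally, with the equality of sets in hand, I would observe that any two members $S \cup \IP(C)$ and $S' \cup \IP(C)$ are internally related (both arise from $C$ with $S, S' \subseteq \IA(C)$), so by \cref{def:extint} the order between them is containment: $S\cup \IP(C) \leq_{ext/int} S'\cup\IP(C)$ iff $S \subseteq S'$. This exhibits $(B,C]$ as the boolean lattice $2^{\IA(C)}$, as claimed. I expect no substantive obstacle in this argument; the only point requiring care is the bookkeeping with strict versus non-strict inequalities in the middle step and the correct use of the covering relation in the order on bases.
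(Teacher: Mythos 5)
Your proof is correct and follows essentially the same route as the paper: you show every element of $(B,C]$ must be internally related to $C$ by deriving a contradiction with the covering relation, and then read off the boolean structure from \cref{def:extint}. The only difference is that you also verify explicitly that every set $S \cup \IP(C)$ lies in $(B,C]$ (using the proposition comparing independent sets related to distinct bases), a point the paper leaves implicit.
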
 
\begin{proof}
    Assume that $I$ is such that $B <_{ext/int} I \leq_{ext/int} C$. We claim that $I$ is internally related to $C$. If not then $I$ is internally related to a basis $C' \neq C$. Since $I \setminus \IA(I) \cup \EA(I) = C'\setminus \IA(C') \cup \EA(C')$, it follows that $B<_{ext/int}C'<_{ext/int}C$, which is a contradiction. 

    Since $I$ is related to $C$, it is obtained by deleting some subset of internally active elements from $C$. It follows that $I = S \cup \IP(C)$ where $S \subset \IA(C)$. The restriction of $\leq_{ext/int}$ to subsets of this form gives a boolean lattice by definition.    
\end{proof}
\begin{remark}\label{rem:auto}
The proposition displays a curious non-trivial automorphism on the collection of independent sets of matroid: Every closed interval $(B,C]$ can be ``flipped upside down''. That is, the map $S \cup \IP(C) \mapsto (\IA(C)\setminus S) \cup \IP(C)$ is an involutive bijection $\mathcal{I}(M) \to \mathcal{I}(M)$. It follows from this that
\[
\sum_{I \in \mathcal{I}(M)} q^{|I|}
=
\sum_{I \in \mathcal{I}(M)} q^{|\IA(C_I)-Y_I|+|\IP(C_I)|}
\]
where we write $I = C_I \setminus Y_I$ where $C_I$ is a basis and $Y_I \subset \IA(C_I)$. 
\end{remark}

Las Vergnas shows that on $\mathcal{B}(M)$, $\leq_{ext/int}$ defines a lattice \cite[Theorem~6.4]{LV}. Denote the meet and join by $\wedge^{\mathcal{B}}$ and $\vee^{\mathcal{B}}$. We offer the following extension of this result.
\begin{proposition}
 The order relation $\leq_{ext/int}$ defines a lattice on $\mathcal{I}(M)$, with meet $\wedge$ and join $\vee$ given by 
 \begin{enumerate}
     \item $I \wedge J = I \cap J$, $I \vee J = I \cup J$ whenever $I$ and $J$ are internally related;
     \item If $I$ and $J$ are related to different  bases $A$ and $B$, then $I \wedge J = A \wedge^{\mathcal{B}} B $ and $I \vee J = \IP(A \vee^{\mathcal{B}} B)$.
 \end{enumerate}
\end{proposition}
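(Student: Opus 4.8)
The plan is to exploit the ``fibered'' structure of $\mathcal{I}(M)$ over $\mathcal{B}(M)$ that is implicit in the results already assembled. For a basis $C$ write $B_C=\{S\cup\IP(C):S\subseteq\IA(C)\}$ for the set of independent sets related to $C$. By \cref{prop:crapo}(2) the fibers $B_C$ partition $\mathcal{I}(M)$; by \cref{def:extint} and \cref{prop:bool} the restriction of $\leq_{ext/int}$ to $B_C$ is the boolean lattice on the set $\IA(C)$, with top element $C$ and bottom element $\IP(C)$; and by the Proposition relating the two orders (for $I$ related to $A$ and $K$ related to $C$ with $A\ne C$, one has $I\leq_{ext/int}K$ iff $A\leq_{ext/int}C$) the order between two \emph{distinct} fibers is pulled back along the map $I\mapsto C_I$ sending an independent set to its related basis. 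In other words, $\mathcal{I}(M)$ is the lattice $\bigl(\mathcal{B}(M),\leq_{ext/int}\bigr)$ with each basis $C$ replaced by the bounded lattice $B_C$. It is a general (and, in the finite case, routine) fact that such a blow-up of a lattice by bounded lattices is again a lattice; the content of the proposition is to identify the meet and join explicitly, and I would do this by a short case analysis, treating the meet (the join being dual).

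In case (1), when $C_I=C_J=:C$, I would note that the boolean meet in $B_C$ is $I\cap J$, which is again an independent set related to $C$; it is the meet in all of $\mathcal{I}(M)$ because any lower bound $L$ of $\{I,J\}$ sitting in a different fiber $B_E$ satisfies $E\leq_{ext/int}C$ (apply the transfer proposition to $L\leq_{ext/int}I$) and hence $L\leq_{ext/int}I\cap J$, while lower bounds inside $B_C$ are contained in $I\cap J$ by booleanness. In case (2), when $C_I=A\ne B=C_J$: if $A$ and $B$ are comparable the transfer proposition makes $I$ and $J$ comparable and the meet and join are simply the smaller and larger of the two, so I may assume $A$ and $B$ incomparable. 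Set $D=A\wedge^{\mathcal B}B$, which exists by Las Vergnas's lattice theorem for $\mathcal{B}(M)$. Incomparability forces $D\ne A,B$, hence $D<_{ext/int}A$ and $D<_{ext/int}B$, so $D$ (the top of its own fiber $B_D$) is a lower bound of $\{I,J\}$. For maximality, a lower bound $L$ in a fiber $B_E$ cannot have $E=A$ (that would force $A\leq_{ext/int}B$) nor $E=B$; with $E\ne A,B$ the inequalities $L\leq_{ext/int}I,J$ give $E\leq_{ext/int}A,B$, hence $E\leq_{ext/int}D$, hence $L\leq_{ext/int}D$. Thus $I\wedge J=D=A\wedge^{\mathcal B}B$, and dually $I\vee J=\IP(A\vee^{\mathcal B}B)$, the bottom element of the fiber over $A\vee^{\mathcal B}B$.

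I expect the proof to be mostly bookkeeping, since the two structural inputs --- the boolean description of a fiber and the transfer of $\leq_{ext/int}$ to related bases --- are already in hand. The one place that requires a little care is the incomparable sub-case of (2): one must be sure the meet ``drops'' to precisely the \emph{top} element $A\wedge^{\mathcal B}B$ of the intermediate fiber and not to some proper subset of it. This is exactly where the strictness $A\wedge^{\mathcal B}B<_{ext/int}A,B$ coming from incomparability is used, together with the observation that an element sitting in a fiber strictly below $B_{A\wedge^{\mathcal B}B}$ is automatically $\leq_{ext/int}$ every element of that fiber. The only further checks are that every invocation of the transfer proposition has its sole hypothesis (distinctness of the two related bases) satisfied, and that $I\cap J$ and $I\cup J$ in case (1) are genuinely independent --- which is clear since both are subsets of $C$.
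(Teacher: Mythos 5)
Your proposal is correct and follows essentially the same route as the paper's proof: meets and joins within a single fiber come from the boolean structure of \cref{prop:bool}, while those across distinct fibers are governed by Las Vergnas's lattice on $\mathcal{B}(M)$ via the proposition transferring $\leq_{ext/int}$ to the related bases. You simply write out the verification in full (and sensibly isolate the subcase of comparable related bases, where the meet and join are just the smaller and larger of $I$ and $J$), details that the paper's two-sentence proof leaves implicit.
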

\begin{proof}
    If $I$ and $J$ are internally related to a basis $C$ this is immediate from \cref{prop:bool}. If $I$ and $J$ are not related, the uniqueness of greatest lower bounds and least upper bounds is determined by the uniqueness of taking the meet and join of the related bases.
\end{proof}

\section{Shellings}\label{sec:shell}
Here we introduce some background on shellings of simplicial complexes.
\subsection{Shellable complexes}
\label{ssec:shellability} We offer the first two sections of \cite{bjorner} as a particularly relevant reference on shellability of simplicial complexes, $h$-polynomials, etc. as it relates to matroids and related complexes.

Recall that a simplicial complex is pure if all its facets have the same cardinality. The dimension of a face of a simplicial complex is one less than its cardinality.
\begin{definition}\label{def:shellable} Let $\Delta$ be a pure simplicial complex with dimension $d$. A \emph{shelling order} is an ordering of the facets
$F_1,F_2,\ldots,F_s$ such that for every $i<k$ there exists $j<k$ and $e \in F_k$ with $F_i \cap F_k \subseteq F_j \cap F_k = F_k \setminus e$.
If a shelling order exists, then we call $\Delta$ \emph{shellable}.
\end{definition}
It is equivalent to demand that, for all $2 \leq j \leq s$, the complex generated by $F_1, \dots, F_{j-1}$ meets the complex generated by $F_j$ in a pure subcomplex of dimension $d-1$. 

Given a shelling order and a facet $F_j$, there is a subset $R_j$ such that for every $A \subseteq F_j$, we have that $R_j \subseteq A$ if and only of $A \not\subseteq F_i$ for every $i < j$. That is, when we add the facet $F_j$ to the  subcomplex generated by $F_1,\dots,F_{j-1}$, the new faces that we introduce are precisely those in the interval $[R_j, F_j]$. The set $R_j$ is called the \emph{restriction set} of $F_j$ in the shelling order.

\begin{definition}
The $f$-vector of a $d-1$-dimensional simplicial complex $\Delta$ is
$f=(f_0,\ldots , f_d)$ where $f_i$ is the number of faces of $\Delta$ with dimension $i-1$. The $h$-vector $h=(h_0, \ldots , h_d)$ is defined by
\[f_0(q - 1)^d + f_1(q - 1)^{d-1} + \cdots + f_d(q - 1)^0 = h_0q^d + h_1q^{d-1} + \cdots + h_dq^0.\]
The $h$-polynomial of $\Delta$ is $h_{\Delta}(q) = \sum_i h_i q^{d-i}$.
\end{definition}

If $\Delta$ is a shellable simplicial complex then the $h$-vector can also be obtained from the restriction sets of the shelling order.
\begin{proposition} If $\Delta$ is a pure simplicial complex with shelling order $F_1,\ldots,F_s$ then $h_i = \# \{R_j : |R_j| = i \}$.
\end{proposition}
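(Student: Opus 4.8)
The plan is to exploit the decomposition of the face poset of $\Delta$ furnished by the shelling. As recalled in the discussion preceding the statement, for each $j$ the faces contained in $F_j$ but in none of $F_1,\dots,F_{j-1}$ are exactly those in the interval $[R_j,F_j]$; since every face of $\Delta$ lies in some facet, it lies in exactly one such interval, namely the one indexed by the first facet in the shelling order that contains it. Hence $\{[R_j,F_j] : 1\le j\le s\}$ is a partition of the set of faces of $\Delta$. Counting faces by cardinality within this partition will turn the left side of the equation defining the $h$-vector into $\sum_{j} q^{d-|R_j|}$, and comparing coefficients will finish the proof.

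In detail, first I would note that since $\Delta$ is pure of dimension $d-1$ every facet has cardinality $d$, so $[R_j,F_j]$ is isomorphic to a boolean lattice on $d-|R_j|$ elements and contains exactly $\binom{d-|R_j|}{k-|R_j|}$ faces of cardinality $k$. Writing $f_k$ for the number of faces of cardinality $k$ (that is, of dimension $k-1$) and $r_i=\#\{j:|R_j|=i\}$, summing over the partition gives
\[
f_k=\sum_{j=1}^{s}\binom{d-|R_j|}{k-|R_j|}=\sum_{i\ge 0} r_i\binom{d-i}{k-i},\qquad 0\le k\le d,
\]
with the convention that $\binom{d-i}{k-i}=0$ unless $i\le k\le d$.

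Finally I would substitute this into $\sum_{k=0}^{d}f_k(q-1)^{d-k}$, exchange the order of summation, and set $m=k-i$ so that $d-k=(d-i)-m$:
\[
\sum_{k=0}^{d}f_k(q-1)^{d-k}=\sum_{i\ge 0}r_i\sum_{m=0}^{d-i}\binom{d-i}{m}(q-1)^{(d-i)-m}=\sum_{i\ge 0}r_i\,\bigl(1+(q-1)\bigr)^{d-i}=\sum_{i\ge 0}r_i\,q^{d-i},
\]
using the binomial theorem. Comparing with $h_\Delta(q)=\sum_i h_iq^{d-i}$ yields $h_i=r_i=\#\{j:|R_j|=i\}$, as claimed.

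The only step with genuine content is the partition statement, and it has already been established before the proposition as an immediate consequence of the definition of the restriction set; everything else is the elementary identity $\sum_{m}\binom{n}{m}(q-1)^{n-m}=q^{n}$. Accordingly I expect no real obstacle here — the only care needed is in lining up the indexing conventions for the $f$-vector and $h$-vector with the cardinalities $|R_j|$.
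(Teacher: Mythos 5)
Your argument is correct: the interval partition $\{[R_j,F_j]\}$ of the face poset, the binomial count of faces in each interval, and the binomial-theorem collapse to $\sum_j q^{d-|R_j|}$ are exactly the classical proof of this fact. The paper itself states this proposition as standard background (deferring to Bj\"orner's exposition) and gives no proof, so there is nothing to contrast with; your write-up is the argument the paper implicitly relies on, and the indexing conventions ($f_k$ counting faces of cardinality $k$, including the empty face in $[R_1,F_1]$) are handled correctly.
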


\subsection{$h$-complexes and $H$-shellings}\label{sec:hshelling}
Let $\Delta$ be a shellable simplicial complex with vertex set $E$ whose facets $F_1,F_2,\dots,F_s$ are listed here in a shelling order. Let $R_1,R_2,\dots,R_s$ be the restriction sets for this shelling. Assume that the collection $\Gamma = \{R_1,\dots,R_s\}$ is, itself, a simplicial complex. This following situation, improbable as it may seem, is known to arise in several contexts and was studied by Edelman and Reiner \cite{ER}. In this situation, the shelling of $\Delta$ is called an $h$-shelling and $\Gamma$ is called an $h$-complex.

The following result describes the $f$-vector of an $h$-complex.
\begin{proposition}
    If $\Delta$ has an $h$-shelling with corresponding $h$-complex $\Gamma$ then the $h$-vector of $\Delta$ is the $f$-vector of $\Gamma$.
\end{proposition}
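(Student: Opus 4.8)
The plan is to compare the $f$-vector of $\Gamma$ with the $h$-vector of $\Delta$ by counting faces of $\Gamma$ according to dimension and matching this against the count of restriction sets by cardinality. First I would recall the two facts already available: by the proposition at the end of Section~\ref{ssec:shellability}, if $F_1,\dots,F_s$ is a shelling order of $\Delta$ with restriction sets $R_1,\dots,R_s$, then $h_i=\#\{j : |R_j|=i\}$; and by the standing hypothesis of an $h$-shelling, the collection $\Gamma=\{R_1,\dots,R_s\}$ is a simplicial complex. The crux is that the $R_j$ are pairwise distinct faces of $\Gamma$ and that they exhaust the faces of $\Gamma$; granting this, the face of $\Gamma$ of dimension $i-1$ are exactly the $R_j$ with $|R_j|=i$, so $f_{i-1}(\Gamma) = \#\{j : |R_j| = i\} = h_i(\Delta)$, which is the claim.

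So the key step is to argue that $\Gamma$ has exactly $s$ faces, namely the $R_j$, each occurring once. Distinctness is the standard uniqueness statement for restriction sets in a shelling: since the half-open intervals $[R_j, F_j]$ partition the faces of $\Delta$ (each face $A$ lies in $[R_j,F_j]$ for the unique $j$ such that $F_j$ is the first facet in the shelling order containing $A$), two restriction sets $R_j = R_k$ with $j\neq k$ would force $R_j \subseteq F_j \cap F_k$, hence $R_j$ lies in both $[R_j,F_j]$ and $[R_k,F_k]$, contradicting disjointness. That $\{R_1,\dots,R_s\}$ is all of $\Gamma$ is immediate once we know $\Gamma$ is a simplicial complex whose facets are among the $R_j$: but here we must be slightly careful, since a priori $\Gamma$ being a simplicial complex means it is closed under taking subsets, and a subset of some $R_j$ need not obviously be another $R_k$. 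The resolution is that the defining data of the $h$-complex is precisely the vertex-set $\{R_1,\dots,R_s\}$ together with the assumption that this set, as given, forms a simplicial complex — i.e. is already downward closed — so every face of $\Gamma$ is by definition one of the listed $R_j$. I would state this explicitly to avoid circularity.

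Having established that the faces of $\Gamma$ are exactly $R_1,\dots,R_s$ without repetition, the bookkeeping finishes the proof: $f_{i-1}(\Gamma)$, the number of $(i-1)$-dimensional faces of $\Gamma$, equals $\#\{j : |R_j| = i\}$, which by the $h$-vector proposition equals $h_i(\Delta)$. Thus $(f_0(\Gamma),\dots,f_{d}(\Gamma)) = (h_0(\Delta),\dots,h_d(\Delta))$, where $d$ is the top dimension of $\Delta$ (note $\Gamma$ sits inside the first facet, so its dimension is at most $d-1$, consistent with $h_0 = 1$ coming from $R_1 = \emptyset$). The main obstacle, such as it is, is purely conceptual rather than computational: pinning down exactly what the hypothesis "$\Gamma$ is a simplicial complex" is being asked to supply, so that the identification of the faces of $\Gamma$ with the restriction sets is a genuine consequence and not a tautology in disguise. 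Once that is spelled out, the rest is a one-line dimension count.
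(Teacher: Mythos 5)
Your argument is correct and takes essentially the approach the paper intends: the paper records this proposition without proof, treating it as immediate from the preceding fact that $h_i$ equals the number of restriction sets of cardinality $i$ together with the definition of $\Gamma$ as the (downward closed) collection $\{R_1,\dots,R_s\}$, and your careful points — that the $R_j$ are pairwise distinct because the intervals $[R_j,F_j]$ partition the faces of $\Delta$, and that the faces of $\Gamma$ are exactly the $R_j$ — are precisely the right bookkeeping. One parenthetical aside is inaccurate but harmless: $\Gamma$ need not lie inside the first facet and its dimension can be as large as $\dim\Delta$ (a restriction set may equal an entire facet, e.g.\ the last facet in a shelling of the boundary of a simplex), so the bound ``at most $d-1$'' should be dropped, though nothing in the main argument uses it.
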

%Edelman-Reiner obsersation 2.3

Edelman and Reiner say that the shelling $F_1,\dots,F_s$ has property $(H)$ if whenever $e \in E$ and $G \in \Delta$ is a face contained in a facet $F$, the following implication holds, 
\begin{equation}\label{eq:H'}
  \tag{$H$}  \textup{if $e \in G \subset F$ and $e \in R(F)$, then $e \in R(G)$.}
\end{equation}
Here we employ the convention that for a facet $F_i$, $R(F_i) = R_i$, and for a non-facet $G$, if $G \in [R_i,F_i]$ then $R(G) = R_i$. In verifying \eqref{eq:H'}, it is sufficient to take $G$ to be codimension one in $F$ by \cite[Theorem~2.6]{ER}. (Technically our property \eqref{eq:H'} is property $(H')$ in \cite{ER}, but since these are equivalent we proceed with the un-primed notation.)
\begin{theorem}[{\cite[Theorem~2.7]{ER}}]
    If a shelling has property \eqref{eq:H'} then it is an $h$-shelling.
\end{theorem}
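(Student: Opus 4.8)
The plan is to prove that property \eqref{eq:H'} implies the shelling is an $h$-shelling, i.e.\ that the collection of restriction sets $\Gamma=\{R_1,\dots,R_s\}$ is closed under taking subsets. Since $\Gamma$ is already a finite collection of sets, it suffices to show: if $R$ is a restriction set of some facet in the shelling and $R'\subseteq R$, then $R'$ is also a restriction set. By induction on $|R\setminus R'|$ it is enough to treat the case where $R'=R\setminus e$ for a single element $e\in R$; then a finite descent gives the general statement.

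So fix a facet $F=F_j$ with restriction set $R=R(F_j)=R_j$, and fix $e\in R$. I want to produce a facet whose restriction set is $R\setminus e$. The natural candidate is obtained by ``walking down'' one step: consider the face $G=F_j\setminus e$. This is a codimension-one face of $F_j$. Since $e\in R_j\subseteq G\cup e$ but we have removed $e$, the face $G$ no longer contains $R_j$, so by the defining property of the restriction set $G$ is \emph{not} a new face introduced by $F_j$; hence $G\subseteq F_i$ for some $i<j$, and in particular $G$ lies in some half-open interval $[R_i,F_i]$ with $i<j$. Let $F'=F_i$ be the facet (chosen, say, with $i$ minimal) with $G\in[R(F'),F']$, and set $R'=R(F')$. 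The claim is that $R'=R\setminus e$. First, $R'\subseteq G=F\setminus e$, so $e\notin R'$ and $R'\subseteq R\setminus e$ would follow once we know $R'\subseteq R$; conversely I must show $R\setminus e\subseteq R'$. Take any $f\in R\setminus e$. Then $f\in R=R(F)$ and $f\in G\subseteq F$, so property \eqref{eq:H'} applied to the face $G$ (which is codimension one in $F$, exactly the case it suffices to check) gives $f\in R(G)=R'$. Hence $R\setminus e\subseteq R'$. For the reverse containment, note $R'=R(G)\subseteq G=F\setminus e$, and every $g\in R'$ lies in $G\subseteq F$; I then need that $g\in R(F)=R$. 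This is where one uses that $G\in[R',F']$ with $F'$ earlier: actually the cleanest route is to observe that $R'\subseteq G\subsetneq F$ and $R'\ni$ nothing outside $R$ — concretely, if some $g\in R'\setminus R$ existed, then $G\setminus\{g\}$, a subface of $F$ not containing $R$, would be a new face of $F_j$, contradicting that it is already covered by an earlier facet (as $g\in R(G)$ means $G$ itself and every superface-within-$F'$ containing $g$ is new at stage $i$, so $G\setminus g\subseteq F_{i'}$ for some $i'<i<j$, and since $g\notin R=R_j$, $G\setminus g$ contains $R_j\setminus\{$something$\}$... ). Let me instead argue: $R'=R(G)$ is by definition the restriction set of the \emph{unique} interval $[R_i,F_i]$ containing $G$; since $G\subseteq F_j$ too, and restriction-set intervals partition the complex, the interval containing $G$ is the one at its ``first appearance'', forcing $R'\subseteq R$ precisely because $R$ is the restriction set of the interval containing $F\supseteq G$... this needs the poset-of-intervals partition structure, which follows from the definition of restriction sets, so I will invoke that: the intervals $[R_k,F_k]$ partition $\Delta$, $G$ lies in $[R',F']$, and $R\setminus e\subseteq R'\subseteq G=F\setminus e$ combined with $R\subseteq F$ pins down $R'$. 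Carrying this bookkeeping carefully yields $R'=R\setminus e$.

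Thus $R\setminus e$ is the restriction set of the facet $F'=F_i$, proving the one-step case. Iterating, every subset of every $R_j$ is a restriction set, so $\Gamma$ is a simplicial complex and the shelling is an $h$-shelling by \cite[Theorem~2.7]{ER}; alternatively, we are simply reproving that statement, and the conclusion $h_i=\#\{R_j:|R_j|=i\}$ together with closure under subsets identifies the $h$-vector of $\Delta$ with the $f$-vector of $\Gamma$.

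I expect the main obstacle to be the reverse containment $R'\subseteq R\setminus e$, i.e.\ ruling out that the earlier facet's restriction set is strictly smaller or incomparable to $R$; handling this requires careful use of the partition of $\Delta$ into the half-open intervals $[R_k,F_k]$ determined by the shelling, rather than just the local property \eqref{eq:H'}. The forward containment $R\setminus e\subseteq R'$ is exactly what property \eqref{eq:H'} is designed to deliver and should be immediate. A secondary, purely bookkeeping point is to make sure the induction on $|R\setminus R'|$ only ever invokes \eqref{eq:H'} on codimension-one faces, which is legitimate by \cite[Theorem~2.6]{ER} as already noted in the excerpt.
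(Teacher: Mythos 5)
The paper does not actually prove this statement; it is imported verbatim from \cite[Theorem~2.7]{ER}, so there is no internal argument to compare against, and I will assess your proof on its own terms.

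Your Step~1 is a correct use of property~\eqref{eq:H'}: for any face $G$ with $R\setminus e\subseteq G\subseteq F_j$, applying \eqref{eq:H'} to each $f\in R\setminus e$ gives $R\setminus e\subseteq R(G)$. The fatal problem is your choice $G=F_j\setminus e$. The reverse containment $R(G)\subseteq R\setminus e$ that you flag as ``the main obstacle'' is not merely hard to pin down--it is \emph{false}. Take the complex with facets $F_1=\{a,b,c\}$, $F_2=\{a,b,d\}$, $F_3=\{a,d,e\}$, shelled in that order. Then $R_1=\emptyset$, $R_2=\{d\}$, $R_3=\{e\}$, and one checks directly that every instance of \eqref{eq:H'} holds. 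Yet with $F_j=F_3$ and $e$ the last vertex, $F_3\setminus e=\{a,d\}$ lies in $[R_2,F_2]$, so $R(F_3\setminus e)=\{d\}\neq\emptyset=R_3\setminus e$. (The theorem's conclusion still holds: $R_3\setminus e=\emptyset=R_1$ is a restriction set--just not the one your construction locates.) You were steered toward the codimension-one face $F_j\setminus e$ by the remark that it suffices to check \eqref{eq:H'} on codimension-one faces, but that remark is about \emph{verifying} \eqref{eq:H'}; once \eqref{eq:H'} is known, you should \emph{apply} it to whichever face is convenient.

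The repair collapses the proof to a few lines, with no induction, no minimal-index choice, and no interval bookkeeping. Take any $S\subseteq R_j$. Since $S\subseteq R_j\subseteq F_j$, $S$ is a face of $\Delta$ and hence lies in a unique half-open interval $[R_k,F_k]$; by definition $R(S)=R_k\subseteq S$. Conversely, for every $f\in S$ we have $f\in S\subseteq F_j$ and $f\in S\subseteq R_j=R(F_j)$, so \eqref{eq:H'} yields $f\in R(S)=R_k$; hence $S\subseteq R_k$. Therefore $S=R_k$ is itself a restriction set, and $\Gamma=\{R_1,\dots,R_s\}$ is closed under taking subsets, i.e.\ is a simplicial complex, which is precisely the definition of an $h$-shelling. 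In short: plug $S=R_j\setminus e$ (or any subset of $R_j$) into \eqref{eq:H'}, not $F_j\setminus e$, and the reverse containment comes for free.
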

We call such a shelling an $H$-shelling. Property $(H)$ is strictly stronger than being an $h$-shelling; see \cite[Figure~1]{ER}. In the cases we encounter, it will be immediate that the shellings we produce are $h$-shellings since the associated collections of restriction sets will be familiar complexes. We will also show that our shellings satisfy the stronger property \eqref{eq:H'}
% Property $(H')$ is strictly stronger than being an $h$-shelling; see \cite[Figure~1]{ER}. In the cases we encounter, it will be immediate that the shellings we produce are $h$-shellings since the associated $h$-complexes will be familiar complexes. We will also show that our shellings also satisfy the stronger property \eqref{eq:H'}. 

\section{The External Activity Complex}\label{sec:acs}
In this section we review the construction and results of Ardila, Castillo and Samper \cite{ACS}. All the results in this section are attributable to this work.

Let $E$ be a set with $n$ elements and define $E(x,z):= \{ x_e,z_e : e \in E\}$. We will employ the following notation: For subsets $S,T \subseteq E$, $x_S$ denotes $\{x_i : i \in S\}$ and $z_T$ is defined similarly. We will write $x_S z_T$ for the union $x_S \cup z_T$.

\begin{definition}
Let $M = (E, \mathcal{B})$ be a matroid with a fixed linear order $<$ on $E$. The \emph{external activity complex}, denoted $\underline{\Delta_M}$, is the simplicial complex with ground set $E(x,z)$ and facets
\[F(B):= x_{B \cup \EP(B)} z_{B \cup \EA(B)}\]
for each basis $B$.
\end{definition}
It is immediate that $\underline{\Delta_M}$ is pure of dimension $n+r-1$, when $M$ is a matroid of rank $r$ on $n$ elements.

\begin{example} Here are the bases and corresponding facets of the external activity complex in our running example.
\[
\begin{array}{c|ccc}
B& \EA(B) & \EP(B) & F(B)\\
\hline
345 & \0 & 12 &  x_{12345}z_{345}\\
135 & \0 & 24 & x_{12345}z_{135}\\
245 & \0 & 13  & x_{12345}z_{245}\\
235 & \0 & 14  & x_{12345}z_{235}\\
125 & 3 & 4  & x_{1245}z_{1235}\\
134 & 5 & 2  & x_{1234}z_{1345}\\
234 & 5 & 1 & x_{1234}z_{2345}\\
124 & 35 & \0 & x_{124}z_{12345}\\
\end{array}
\]
\end{example}

\subsection{Shellings of the external activity complex}
In this section we describe a large set of shelling orders of the external activity complex $\underline{\Delta_M}$. We will later extend these results to the augmented external activity complex. 

\begin{theorem}[{\cite[Theorem~1.1]{ACS}}]\label{thm:acs} Let $M$ be a matroid with linear order on the ground set, $E$. For any linear extension of Las Vergnas's external/internal order $\leq_{ext/int}$ on the collection of bases of $M$, the corresponding ordering of the facets is a shelling order of the external activity complex, $\underline{\Delta_M}$.
\end{theorem}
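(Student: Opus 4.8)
The plan is to verify the shelling condition of \cref{def:shellable} directly: given bases $A <_{ext/int} B$ appearing in this order, I must produce a basis $C <_{ext/int} B$ and an element $v \in F(B)$ with $F(A) \cap F(B) \subseteq F(C) \cap F(B) = F(B) \setminus v$. The natural strategy is to first compute the intersection $F(A)\cap F(B)$ explicitly in terms of activities. Since $F(B) = x_{B\cup \EP(B)}z_{B\cup\EA(B)}$, the $x$-part of the intersection is $x_{(A\cup\EP(A))\cap(B\cup\EP(B))}$ and the $z$-part is $z_{(A\cup\EA(A))\cap(B\cup\EA(B))}$. By \cref{LVextint}, $A<_{ext/int}B$ forces $A\setminus\IA(A)\cup\EA(A)\subseteq B\setminus\IA(B)\cup\EA(B)$, equivalently $\IP(A)\cup\EA(A)\subseteq\IP(B)\cup\EA(B)$; I would use this to control the $z$-part and its dual (via \cref{prop:crapo}(1) applied to $A$ as a subset relative to $B$, or via the external order) to control the $x$-part. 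The goal of this computation is to pin down a single coordinate $v$ — an $x_e$ or a $z_e$ — that lies in $F(B)$ but not in $F(A)$, and ideally the \emph{same} $v$ works for every $A<_{ext/int}B$ preceding $B$ in the linear extension.

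Next I would exhibit the covering basis $C$. Since $A\ne B$ and $A<_{ext/int}B$, the interval $(A,B)$ in the external/internal lattice is nonempty or $A$ is covered by $B$; in either case pick $C$ to be a basis with $A \le_{ext/int} C \lessdot_{ext/int} B$, so $C$ appears before $B$ in the linear extension. The content is to show that the chosen $v$ satisfies $v\notin F(C)$ while $F(C)\cap F(B)$ contains everything in $F(A)\cap F(B)$; since $C$ is closer to $B$ in the order, one expects $F(A)\cap F(B) \subseteq F(C)\cap F(B)$ to follow from monotonicity of the activity sets along $\le_{ext/int}$, which is exactly what \cref{LVextint}(3),(4) and the external/internal order's definition provide. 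The statement $F(C)\cap F(B) = F(B)\setminus v$ then amounts to showing that $C$ and $B$ differ in precisely one coordinate of $F(\cdot)$ when $C\lessdot B$ — this is where the lattice structure of $\le_{ext/int}$ on bases (Las Vergnas \cite[Theorem~6.4]{LV}) and the precise description of covers should be invoked.

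The main obstacle I anticipate is the combinatorial bookkeeping needed to identify $v$ uniformly and to handle the interaction between the external ($z$) side and the internal ($x$) side: a cover $C\lessdot_{ext/int}B$ can arise from either an "external move" (changing $\EA$) or an "internal move" (changing $\IA$), and in the two cases the distinguished coordinate $v$ will be a $z_e$ or an $x_e$ respectively. Dealing cleanly with the case analysis — and in particular verifying that when $A<_{ext/int}B$ factors through a cover of the "wrong type" the intersection is still contained appropriately — is the delicate part. A secondary subtlety is that $F(A)\cap F(B)$ should be shown to be contained in $F(C)\cap F(B)$ for the specific $C$ chosen, which may require choosing $C$ carefully (e.g., $C = A\vee^{\mathcal B}(\text{something})$ or a cover lying on a maximal chain from $A$ to $B$) rather than an arbitrary cover below $B$. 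I would structure the argument to first dispose of the easy containment on one side using the order axioms, then spend most of the effort isolating $v$ and checking the single-coordinate-difference claim for covers, reducing as much as possible to the analysis of exchanges $B\setminus e'\cup e$ between bases described right after \cref{prop:crapo}.
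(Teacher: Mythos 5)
There is a genuine gap, and it starts with the hypothesis. In a linear extension $\prec$ of $\leq_{ext/int}$, a facet $F(A)$ preceding $F(B)$ only tells you $B \not\leq_{ext/int} A$; it does \emph{not} tell you $A <_{ext/int} B$, because incomparable bases may be placed in either order. Your entire construction assumes $A <_{ext/int} B$ and then looks for a cover $C$ with $A \leq_{ext/int} C \lessdot_{ext/int} B$; such a $C$ simply does not exist when $A$ and $B$ are incomparable, so the shelling condition is never verified for those pairs. In the running example, $A = 134$ and $B = 234$ are incomparable, and the witness that actually works is $345 = 234\setminus 2 \cup 5$, which is not above $A$ at all. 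This is exactly why the paper (following \cite{ACS}) works under the hypothesis ``$C \not\leq_{ext/int} A$'' and produces the witness by a single basis exchange $B = C\setminus c \cup b$ with $c \in \IP(C)\cap \EP(A)$, as recorded in \cref{lem:acs}; in particular the deleted vertex is always a $z$-coordinate $z_c$, and $c$ depends on $A$, so the ``uniform $v$'' you hope for cannot exist in general.

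Even in the comparable case your justification of the containment $F(A)\cap F(B) \subseteq F(C)\cap F(B)$ is not valid: $\leq_{ext/int}$ strictly refines the external order, so $A \leq_{ext/int} C$ does \emph{not} imply $A \cup \EA(A) \subseteq C \cup \EA(C)$ (e.g.\ $345 <_{ext/int} 135$ in the running example, while $345 \not\subseteq 135 \cup \EA(135) = 135$), so the $z$-supports are not monotone along the order and ``monotonicity of the activity sets'' cannot be invoked. Likewise, the claim that a cover $C \lessdot_{ext/int} B$ forces $F(C)\cap F(B)$ to have codimension one in $F(B)$ is asserted, not proved, and it is not how the argument goes: the codimension-one statement in \cref{lem:acs}(6) is obtained from the explicit exchange $B = C\setminus c\cup b$ together with the activity comparisons (3)--(5), not from the lattice structure of covers. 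To repair your proposal you would need to replace the ``cover above $A$'' strategy by the characterization in \cref{lem:acs0} and the exchange construction of \cref{lem:acs}, i.e.\ choose $c \in \IP(C)\cap\EP(A)$ (nonempty precisely because $C \not\leq_{ext/int} A$, by \cref{LVextint}(2)) and verify the three activity conditions directly.
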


Before we review the proof of this result, it is worthwhile to note that the facets of the external activity complex are defined by the bases of a matroid along with their corresponding external activities. However, it is asserted that any linear extension of Las Vergnas's external/internal ordering of the bases produces a shelling order of this complex. This result cannot be relaxed to Las Vergnas's external order or internal order alone. Which is to say, linear extensions of $\leq_{ext}$ on the bases of $M$ and linear extensions of $\leq_{int}$ on the bases of $M$ do not necessarily produce shelling orders of $\underline{\Delta_M}$. See \cite[Examples~3.1,3.2]{ACS}. Our point here is that internal activity is a crucial feature of the proof of their result, but this is not reflected in the definition of external activity complex itself. Our augmented external activity complex has both external and internal activity used in the definition of the complex, which is a natural reflection of the external/internal order used in the proof of \cref{thm:acs}.

The proof of \cref{thm:acs} is organized as follows. Let $\prec$ be an arbitrary linear order on the collection of bases of a matroid and consider the corresponding ordering of the facets of $\underline{\Delta_M}$. Recall by \cref{def:shellable}, $\prec$ induces a shelling of the external activity complex if for every $A \prec C$ there exists $B \prec C$ and $c\in E$ so that 
\[F(A) \cap F(C) \subseteq F(B) \cap F(C) = F(C) \setminus c^{xz}\]
where $c^{xz}$ denotes one of $x_c$ or $z_c$. The following result equivalently characterizes shelling orders of the external activity complex.

\begin{lemma}[{\cite[Lemma~4.2]{ACS}}]\label{lem:acs0} Let $\prec$ be a linear order on the collection of bases a matroid. Then $\prec$ induces a shelling of the external activity complex $\underline{\Delta_M}$, if and only if for any bases $A \prec C$ there exists a basis $B \prec C$ so that
\begin{enumerate}
\item $B = C\setminus c\cup b$ where $b \neq c$,
\item $c\notin A$ and $c \in \EA(B)$ if and only if $c \in \EA(A)$, and
\item for any $d \notin B \cup C$ we have $d \in \EA(B)$ if and only if $d \in \EA(C)$.
\end{enumerate}
\end{lemma}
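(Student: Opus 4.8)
The plan is to unpack \cref{def:shellable} one pair of facets at a time and translate it into the combinatorics of external activity. First I would rewrite the facets: since $E\setminus B=\EA(B)\sqcup\EP(B)$ for a basis $B$, we get $B\cup\EP(B)=E\setminus\EA(B)$, so $F(B)=x_{E\setminus\EA(B)}\,z_{B\cup\EA(B)}$, and in particular $|F(B)|=n+r$ for every basis $B$. Consequently, for bases $B,C$, the equality $F(B)\cap F(C)=F(C)\setminus w$ for a single vertex $w$ holds exactly when $|F(B)\,\triangle\,F(C)|=2$, in which case $w$ is the unique vertex of $F(C)$ not in $F(B)$; and $F(A)\cap F(C)\subseteq F(C)\setminus w$ is equivalent to $w\notin F(A)$. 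So the shelling condition for the pair $A\prec C$ reads: there is $B\prec C$ with $|F(B)\,\triangle\,F(C)|=2$ whose distinguished vertex $w$ satisfies $w\notin F(A)$. The lemma then reduces to showing this reformulation is equivalent to the existence of $B\prec C$ satisfying (1), (2), (3).

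The key combinatorial step is to analyze when $|F(B)\,\triangle\,F(C)|=2$. Comparing $x$- and $z$-coordinates separately gives
\[
|F(B)\,\triangle\,F(C)| \;=\; \bigl|\EA(B)\,\triangle\,\EA(C)\bigr| \;+\; \bigl|(B\cup\EA(B))\,\triangle\,(C\cup\EA(C))\bigr| .
\]
I would prove this sum equals $2$ if and only if $B=C\setminus c\cup b$ is a basis exchange with $b\neq c$ and $\EA(B)\,\triangle\,\EA(C)\subseteq\{b,c\}$. The reverse direction is easy: from the exchange hypothesis $\EA(B)\cap(B\cup C)\subseteq C\setminus B=\{c\}$ and $\EA(C)\cap(B\cup C)\subseteq B\setminus C=\{b\}$, so condition (3) — that $\EA(B)$ and $\EA(C)$ agree outside $B\cup C$ — forces $\EA(B)\,\triangle\,\EA(C)\subseteq\{b,c\}$, and a direct computation of $F(B)\,\triangle\,F(C)$ in each resulting sub-case (according to whether $c\in\EA(B)$ and whether $b\in\EA(C)$) gives cardinality $2$ every time. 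The forward direction is the heart of the argument: splitting $2=|\EA(B)\,\triangle\,\EA(C)|+|(B\cup\EA(B))\,\triangle\,(C\cup\EA(C))|$ into the cases $(0,2),(1,1),(2,0)$ and using the size bookkeeping and disjointness (e.g.\ $\EA(B)\cap B=\emptyset$) one checks in each case that $|B\setminus C|=|C\setminus B|=1$, i.e.\ $B$ is a single exchange away from $C$, and that $\EA(B),\EA(C)$ differ only inside $\{b,c\}$. Along the way I would record the distinguished vertex: in all cases $w=x_c$ if $c\in\EA(B)$ and $w=z_c$ if $c\notin\EA(B)$.

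Finally I would match condition (2) with ``$w\notin F(A)$''. If $c\notin\EA(B)$, so $w=z_c$, then the biconditional in (2) becomes ``$c\notin\EA(A)$'', so (2) says $c\notin A\cup\EA(A)$, i.e.\ $z_c\notin F(A)$. If $c\in\EA(B)$, so $w=x_c$, then the biconditional becomes ``$c\in\EA(A)$'', which already implies $c\notin A$, and this is exactly $x_c\notin F(A)$. Either way, (2) is equivalent to $w\notin F(A)$. Assembling: given $B\prec C$ with (1), (2), (3), the middle paragraph yields $|F(B)\,\triangle\,F(C)|=2$ with distinguished vertex $w$, and (2) gives $w\notin F(A)$, which is the reformulated shelling condition; conversely, the reformulated condition produces $B\prec C$ with $|F(B)\,\triangle\,F(C)|=2$, hence (1) and (3) by the classification, and then $w\notin F(A)$ gives (2).

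I expect the main obstacle to be the forward direction of the classification — showing $|F(B)\,\triangle\,F(C)|=2$ forces a single basis exchange. The subtlety is that the exchange hypothesis alone does \emph{not} give $|F(B)\,\triangle\,F(C)|=2$: external activities of $B$ and $C$ can differ outside $\{b,c\}$, which is precisely what condition (3) forbids, so the cardinality argument must be done with care. A convenient auxiliary fact is that if $B=C\setminus c\cup b$ and $b\in\EA(C)$ then $c\notin\EA(B)$: the element $c$ lies in the unique circuit $\gamma$ of $C\cup b$, so $c<\max\gamma=b$ while $B\setminus b\cup c=C$ is a basis, whence $c\notin\EA(B)$ by the characterization of external activity for bases; this shows the $(2,0)$ split is in fact vacuous, though since it leads to the same distinguished vertex $w=x_c$ it is not strictly needed.
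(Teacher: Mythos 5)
The paper does not prove this lemma; it is imported verbatim from \cite{ACS} as \cite[Lemma~4.2]{ACS}, so there is no internal argument to compare against. On its own merits, your proof plan is correct and is the natural translation of the shelling condition into activity combinatorics.

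Your bookkeeping checks out. Writing $F(B)=x_{E\setminus\EA(B)}z_{B\cup\EA(B)}$ gives $|F(B)|=n+r$ for every basis, so the condition $F(B)\cap F(C)=F(C)\setminus w$ is indeed equivalent to $|F(B)\,\triangle\,F(C)|=2$, and then $F(A)\cap F(C)\subseteq F(C)\setminus w$ is just $w\notin F(A)$. The crux of your classification is correct, and here is a slightly cleaner way to see the forward direction that you may prefer to a split into $(0,2),(1,1),(2,0)$: sort each $e\in E$ by its position relative to $B,C$. Elements of $B\cap C$ contribute $0$ to $|F(B)\,\triangle\,F(C)|$. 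Each element of $B\,\triangle\,C$ contributes exactly $1$ (in one of the $x$- or $z$-slots, depending on whether it is externally active for the basis not containing it). Each $d\notin B\cup C$ contributes $0$ or $2$ according to whether $d\in\EA(B)\,\triangle\,\EA(C)$, since the $x$- and $z$-memberships of $d$ then disagree simultaneously. Hence
$|F(B)\,\triangle\,F(C)|=|B\,\triangle\,C|+2\,\bigl|\{d\notin B\cup C : d\in\EA(B)\,\triangle\,\EA(C)\}\bigr|$,
and this equals $2$ exactly when $|B\,\triangle\,C|=2$ (so $B=C\setminus c\cup b$) and the second term vanishes (which, given the exchange, is condition (3)). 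Your identification of the distinguished vertex $w$ and the reconciliation of condition (2) with $w\notin F(A)$ are also correct, including the observation that $c\in\EA(A)$ already forces $c\notin A$, so the conjunction in (2) is not over-determined. The auxiliary fact ruling out the case $c\in\EA(B)$, $b\in\EA(C)$ is also right, though, as you say, unnecessary for the count.
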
 

It is then shown that any linear extension of Las Vergnas's external/internal order on the collection of bases of $M$ satisfies \cref{lem:acs0} and this proves \cref{thm:acs}. In particular, let $\prec$ be a linear extension of Las Vergnas's external/internal order on the collection of bases of $M$ and let $A \prec C$. This implies that $C\not\leq_{ext/int}A$. Ardila, Castillo and Samper produce a basis $B$ with $B\leq_{ext/int}C$, so that $B$ precedes $C$ in $\prec$, and then they carefully show that $B$ satisfies the conditions of \cref{lem:acs0}. 

In the following lemma we summarize the results that we will make use of and extend towards our discussion of the augmented external activity complex and its shellability. 

\begin{lemma}\label{lem:acs} For any bases $C\not\leq_{ext/int}A$ there exists a basis $B$ so that
\begin{enumerate}
\item $B<_{ext/int}C$,
\item $B = C\setminus c \cup b$ where $b \neq c$,
\item $c\notin A$ and $c \in \EA(B)$ if and only if $c \in \EA(A)$, and
\item for any $d \notin B \cup C$ we have $d \in \EA(B)$ if and only if $d \in \EA(C)$.
\item Furthermore, $c\in \IP(C)\cap\EP(A)\cap\EP(B)$, and
\item $F(A)\cap F(C) \subset F(B)\cap F(C) = F(C)\setminus z_c$.
\end{enumerate}
\end{lemma}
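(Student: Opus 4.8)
The plan is to build the basis $B$ by an explicit greedy/extremal construction relative to the element $c$, mirroring the strategy of \cite{ACS} but keeping careful track of the extra conditions (5) and (6). Since $C \not\leq_{ext/int} A$, by \cref{LVextint}(2) there is an element $c \in \IP(C) \cap \EP(A)$; among all such elements I would choose $c$ to be, say, the largest (the precise extremal choice is exactly the delicate point borrowed from \cite{ACS}, and I would follow their selection rule). Because $c \in \IP(C)$, the element $c$ is internally passive in $C$, so by the characterization of internal activity for bases recalled right after \cref{prop:crapo} there is some $b \notin C$ with $b > c$ and $B := C \setminus c \cup b$ a basis; again I would pick $b$ extremally (largest such $b$, or whatever \cite{ACS} dictates) so that the exchange is ``tight''. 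This immediately gives (2). For (1), I would verify $B <_{ext/int} C$ by checking \cref{LVextint}(3) or (4): the only elements that move are $c$ (leaves $B$) and $b$ (enters $B$), and the extremal choice of $c$ and $b$ is designed precisely to force $B \setminus \IA(B) \cup \EA(B) \subsetneq C \setminus \IA(C) \cup \EA(C)$; this is where I expect to lean hardest on the arguments of \cite{ACS}.

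Next I would establish (3) and (4), which are verbatim the conclusions of \cite[Lemma~4.2]{ACS}/\cref{lem:acs0}, so in principle they come for free from the fact (asserted in the excerpt) that a linear extension of $\leq_{ext/int}$ satisfies \cref{lem:acs0}; concretely, once $B <_{ext/int} C$ is known and $A \prec C$, \cite{ACS} shows the chosen $B$ has these activity-matching properties. Then I would prove (5): I already have $c \in \IP(C)$ by construction and $c \in \EP(A)$ by the choice of $c$ from \cref{LVextint}(2); it remains to see $c \in \EP(B)$. Suppose instead $c \in \EA(B)$. Then by (3), since $c \notin A$, we would get $c \in \EA(A)$, contradicting $c \in \EP(A)$. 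Hence $c \in \EP(B)$, giving $c \in \IP(C) \cap \EP(A) \cap \EP(B)$.

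Finally, for (6) I would translate the combinatorial conditions into the face language. The facets are $F(D) = x_{D \cup \EP(D)} z_{D \cup \EA(D)}$. Since $B = C \setminus c \cup b$ with $c \in \IP(C)$ and $c \in \EP(B)$: on the $z$-side, $C \cup \EA(C)$ and $B \cup \EA(B)$ differ by the exchange of $c$ and $b$ together with condition (4) (which says the ``far'' elements $d \notin B \cup C$ have the same external-activity status), so one checks $z_c \notin F(B)$ while $z_b \in F(B) \cap F(C)$? — here I must be careful; the clean statement to prove is $F(B) \cap F(C) = F(C) \setminus z_c$, i.e. removing exactly the vertex $z_c$ from $F(C)$. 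The inclusion $F(B) \cap F(C) \subseteq F(C) \setminus z_c$ follows because $c \notin B$ and $c \in \EP(B)$ force $z_c \notin F(B)$; the reverse inclusion $F(C)\setminus z_c \subseteq F(B)$ is where conditions (3) and (4) are used to show every other vertex of $F(C)$ survives in $F(B)$ — this amounts to checking $x_e, z_e$ membership element-by-element, using that $B$ and $C$ share all elements except $c, b$ and that their external activities agree off $\{b,c\}$ by (4), while the behaviour at $b$ and at $c$ is pinned down by (2) and (5). For the first inclusion $F(A) \cap F(C) \subseteq F(B) \cap F(C)$, it suffices to show $z_c \notin F(A)$: indeed $c \in \EP(A)$ and, if $c \notin A$ this gives $z_c \notin F(A)$ directly, while $c \notin A$ is guaranteed by (3). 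Combining, $F(A)\cap F(C) \subseteq F(C)\setminus z_c = F(B)\cap F(C)$.

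**Main obstacle.** The genuinely hard step is the extremal choice of $c$ and $b$ and the verification of (1)–(4) — this is the technical heart of \cite{ACS} and must be reproduced or cited precisely. Given (1)–(4), items (5) and (6) are short formal consequences as sketched above; the only care needed there is the bookkeeping of $x$- versus $z$-vertices and the fact that the single removed vertex is $z_c$ (not $x_c$), which uses $c \notin B$ together with $c \in \EP(B)$ rather than any statement about internal passivity on the $x$-side.
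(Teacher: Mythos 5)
Your proposal is correct and follows essentially the same route as the paper: claims (1)--(4) together with the choice of $c \in \IP(C)\cap\EP(A)$ are imported from the proof in \cite{ACS}, claim (5) then follows from (3) exactly as you argue, and claim (6) follows from (2)--(5). The only stylistic difference is in (6), where you unpack the identity $F(B)\cap F(C)=F(C)\setminus z_c$ by an element-by-element check of $x$- and $z$-supports, whereas the paper instead cites the conclusion of \cref{lem:acs0} to get $F(A)\cap F(C)\subset F(B)\cap F(C)=F(C)\setminus c^{xz}$ and then uses $c\in\EP(B)$ to rule out $c^{xz}=x_c$; both are the same computation.
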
 
\begin{proof}
    The first four claims occur in \cite{ACS} when showing that any linear extension of $\leq_{ext/int}$ on the bases of $M$ satisfies \cite[Lemma~4.2]{ACS}. 
    Claim (5) follows by the construction of $B=C\setminus c \cup b$ in the proof of \cite[Theorem~1.1]{ACS} and then applying (3). To be specific, $c\in C$ is chosen so that $c\in \IP(C)\cap \EP(A)$ and (3) implies that $c\in \EP(B)$. 
    The last claim is justified as follows: claims (2), (3) and (4) imply that $F(A)\cap F(C) \subset F(B)\cap F(C) = F(C)\setminus c^{xz}$ where $c^{xz}$ denotes one of $x_c$ or $z_c$ and (5) has that $c\in \EP(B)$. We conclude that $c\notin \EA(B)\cup B$ and this means $z_c$ does not appear in $F(B)$, giving (6).
\end{proof}
%ty

\section{The Augmented External Activity Complex}\label{sec:augea}
In this section we define the augmented external activity complex and prove our main result on its shellability. Before we give this rather technical proof, we include a fully worked example which will explain the structure of the proof.
\subsection{Basic properties and statement of the main theorem}

We will need the following notation.
Let $E$ be a finite set and define $E(x,y,z):= \{ x_e,y_e,z_e : e \in E\}$. We write $E(x)$, $E(y,z)$, etc. for the obvious subsets of $E(x,y,z)$. For subsets $S,T,U \subseteq E$, $x_S$ denotes $\{x_i : i \in S\}$, and $z_T$, $y_U$ are defined similarly. We will write $x_S y_U z_T$ for the union $x_S \cup y_U \cup z_T$. We now define our complex.

\begin{definition}
Let $M = (E, \mathcal{I})$ be a matroid and let $<$ be a linear order on $E$. Define a simplicial complex $\Delta_M$ with ground set $E(x,y,z)$ and facets
\[F(I):= x_{I \cup \EP(I)} y_Y z_{I \cup \EA(I)}\]
for every independent set $I$ where
$I = B\setminus Y$, with $B$ a basis and $Y \subset \IA(B)$ (recall that every $I$ is uniquely expressible in this way).  We call $\Delta_M$ the \emph{augmented external activity complex} of $M$.
\end{definition}
We emphasize that this definition is new. It is immediate that $\Delta_M$ is pure of dimension $n+r-1$, when $M$ is a matroid of rank $r$ on $n$ elements. Observe that the external activity complex, $\underline{\Delta_M}$, is the subcomplex of the augmented external activity complex $\Delta_M$ generated by the facets $F(B)$ where $B$ is a basis of $M$. Also, we have $\Delta_M \cap E(x,z) = \underline{\Delta_M}$, which is to say, that $\underline{\Delta_M}$ is the subcomplex of faces that contain no $y_e$, $e \in E$.

In what follows we will write $\supp_x(G)$ for $\{e \in E: x_e \in G\}$ where $G \in \Delta_M$. We call this the $x$-support of the face $G$. Analogous notation will be used for $y$ and $z$.
\begin{proposition}
   Each basis $B$ of $M$ has $\supp_y(F(B)) = \emptyset$.
\end{proposition}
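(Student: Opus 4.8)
The statement is an immediate consequence of the uniqueness clause in \cref{prop:crapo}(2). Recall that the facet $F(I)$ is defined by first writing the independent set $I$ in the canonical form $I = B \setminus Y$, where $B$ is a basis and $Y \subseteq \IA(B)$, and that this expression is unique. The plan is simply to identify this canonical form when the independent set in question is itself a basis.

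\textbf{Key step.} Let $B$ be a basis of $M$, and consider $B$ as an independent set. One obvious way to write $B$ in the form required by \cref{prop:crapo}(2) is $B = B \setminus \emptyset$: here $B$ is a basis and $\emptyset \subseteq \IA(B)$ trivially. (Equivalently, as noted after the definition of ``internally related'', every basis is internally related to itself, with $Y = \emptyset$.) By the uniqueness asserted in \cref{prop:crapo}(2), this is \emph{the} expression used in the definition of $F(B)$; that is, in the definition $F(I) = x_{I \cup \EP(I)}\, y_Y\, z_{I \cup \EA(I)}$ applied to $I = B$, we necessarily have $Y = \emptyset$.

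\textbf{Conclusion.} Therefore $F(B) = x_{B \cup \EP(B)}\, y_\emptyset\, z_{B \cup \EA(B)}$, and since $y_\emptyset$ contributes no variables of the form $y_e$, we get $\supp_y(F(B)) = \emptyset$.

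\textbf{Main obstacle.} There is essentially none: the only thing to be careful about is to invoke the \emph{uniqueness} part of \cref{prop:crapo}(2) rather than just existence, so that the ad hoc decomposition $B = B\setminus\emptyset$ is forced to be the one appearing in the definition of $F(B)$.
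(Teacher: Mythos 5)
Your proof is correct and matches the paper's (implicit) argument: the paper states this proposition without proof precisely because, by the uniqueness in \cref{prop:crapo}(2) (equivalently, the remark that every basis is internally related to itself), the canonical expression for a basis is $B = B\setminus\emptyset$, so $Y=\emptyset$ and no $y$-variables appear in $F(B)$. One could even shortcut the uniqueness appeal by a cardinality count (any $B = B'\setminus Y'$ with $B'$ a basis forces $Y'=\emptyset$), but your route is exactly the intended one.
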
  

\begin{proposition}\label{prop:Suppx}
Suppose that $I$ is internally related to a basis $B$, then $\supp_x(F(I)) = \supp_x(F(B))$.
\end{proposition}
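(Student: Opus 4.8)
The plan is to reduce the claimed equality of $x$-supports to the invariance statement $\EA(I)=\EA(B)$ supplied by \cref{prop:lvprop}. First I would unwind definitions: if $I$ is internally related to the basis $B$, then, taking $J=B$ in the definition of internally related and using that $B$ is related only to itself (with empty deletion set), we obtain $I=B\setminus Y$ with $Y\subseteq \IA(B)$. Thus $I$ is related to $B$ in the sense of \cref{prop:crapo}(2), so \cref{prop:lvprop} applies to $I$ and $B$. By the definition of $F$ we have $\supp_x(F(I)) = I\cup\EP(I)$ and $\supp_x(F(B)) = B\cup\EP(B)$, so it suffices to prove $I\cup\EP(I) = B\cup\EP(B)$.

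The key step is the elementary set-theoretic identity $S\cup\EP(S) = E\setminus\EA(S)$, valid for every $S\subseteq E$. Indeed, by definition $E\setminus S$ is the disjoint union of $\EA(S)$ and $\EP(S)$, and $\EA(S)\subseteq E\setminus S$; hence $S\cup\EP(S) = S\cup\bigl((E\setminus S)\setminus\EA(S)\bigr) = E\setminus\EA(S)$. Applying this with $S=I$ and $S=B$ gives $\supp_x(F(I)) = E\setminus\EA(I)$ and $\supp_x(F(B)) = E\setminus\EA(B)$, and these sets coincide because $\EA(I)=\EA(B)$ by \cref{prop:lvprop}.

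I do not anticipate any real obstacle: the entire content is the observation that the $x$-support of $F(I)$ is the complement of $\EA(I)$, and that $\EA(\cdot)$ is constant on the class of an independent set and its related basis. The only point requiring a line of care is spelling out why "$I$ internally related to a basis $B$" is the same as "$I$ related to $B$", so that \cref{prop:lvprop} is legitimately available.
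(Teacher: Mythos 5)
Your proof is correct and follows the same route as the paper's: both reduce the claim to $\EA(I)=\EA(B)$ via \cref{prop:lvprop} and then pass to complements in $E$ to identify $I\cup\EP(I)$ with $B\cup\EP(B)$. Your extra care in unwinding the identity $S\cup\EP(S)=E\setminus\EA(S)$ is just a more explicit version of the paper's "taking the complement in $E$" step.
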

\begin{proof}
We need to show that $I\cup \EP(I) = B\cup \EP(B)$. Since $I$ is internally related to the basis $B$ we have $\EA(I)=\EA(B)$ by \cref{prop:lvprop}. Taking the complement in $E$ gives $I\cup \EP(I) = B\cup \EP(B)$.
\end{proof}

% \begin{corollary}
% Suppose that $I$ is related to a basis $B$. If $c\in \IA(B)$ then $c\in I$ or $c\in \EP(I)$.
% \end{corollary}

\begin{proposition}\label{prop:Suppz} Suppose that $I=B\setminus Y$ is internally related to $B$, then $\supp_z(F(I))=\supp_z(F(B))\setminus Y$.
\end{proposition}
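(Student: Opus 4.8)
The goal is to show $\supp_z(F(I)) = \supp_z(F(B)) \setminus Y$ where $I = B \setminus Y$ is internally related to $B$. By definition, $\supp_z(F(I)) = I \cup \EA(I)$ and $\supp_z(F(B)) = B \cup \EA(B)$. First I would invoke \cref{prop:lvprop}, which gives $\EA(I) = \EA(B)$. So the claim reduces to the set-theoretic identity
\[
(I \cup \EA(B)) = (B \cup \EA(B)) \setminus Y,
\]
with $I = B \setminus Y$.

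The plan is to verify this identity by a short case analysis on elements of $E$, where the only subtlety is tracking which side $Y$ lands on. Since $Y \subseteq \IA(B) \subseteq B$, in particular $Y$ is disjoint from $\EA(B)$ (an element cannot be both internally active in $B$ and not in $B$). Then $(B \cup \EA(B)) \setminus Y = (B \setminus Y) \cup (\EA(B) \setminus Y) = (B \setminus Y) \cup \EA(B) = I \cup \EA(B) = I \cup \EA(I)$, using disjointness of $Y$ and $\EA(B)$ for the middle equality. This is the whole argument; it is essentially the dual/companion of the computation in \cref{prop:Suppx}, where one complements in $E$, whereas here one stays on the ``$z$-side'' directly.

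The only place to be careful — and the step I'd expect a referee to want spelled out — is the disjointness $Y \cap \EA(B) = \emptyset$. This follows because $Y \subseteq \IA(B) \subseteq B$ while $\EA(B) \subseteq E \setminus B$ by the very definition of external activity (an externally active element of a basis is not in the basis). So there is genuinely no overlap, and the set-difference distributes cleanly over the union. No deeper matroid theory is needed beyond \cref{prop:lvprop} and these containments. I would write the proof in three sentences: cite \cref{prop:lvprop} for $\EA(I) = \EA(B)$; note $Y \subseteq \IA(B) \subseteq B$ is disjoint from $\EA(B) \subseteq E \setminus B$; conclude $I \cup \EA(I) = (B \setminus Y) \cup \EA(B) = (B \cup \EA(B)) \setminus Y$.
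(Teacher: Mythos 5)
Your proof is correct and follows the same route as the paper: cite \cref{prop:lvprop} to get $\EA(I)=\EA(B)$, then compute $\supp_z(F(I)) = I \cup \EA(I) = (B\setminus Y)\cup\EA(B) = (B\cup\EA(B))\setminus Y$. The paper leaves the disjointness $Y \cap \EA(B)=\emptyset$ implicit in that last set-theoretic step, whereas you spell it out; otherwise the arguments are identical.
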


\begin{proof}
Suppose that $I$ is internally related to the basis $B$ so that $I=B\setminus Y$ with $Y\subset \IA(B)$. We make note that $\EA(I)=\EA(B)$ by \cref{prop:lvprop} and compute,
\[\supp_z(F(I))= I \cup \EA(I) = (B\setminus Y) \cup \EA(B) = (B\cup \EA(B))\setminus Y = \supp_z(F(B))\setminus Y.\qedhere\]
\end{proof}

\begin{corollary} Suppose that $I=B\setminus Y$ is internally related to the basis $B$. The facet $F(I)$ is given by taking the facet $F(B)$ and replacing $z_Y$ with $y_Y$. That is, $F(I)= F(B)\setminus z_Y \cup y_Y$.
\end{corollary}

\begin{proof}
This is an immediate consequence of the definition along 
 with \cref{prop:Suppx} and \cref{prop:Suppz}.
\end{proof}
\begin{example}
Consider, from our running example, the basis $B=245$, which has $\IA(B)= 45$, with its internally related independent sets. The corresponding facets are listed below.
\[
\begin{array}{ccc}
I & B\setminus Y_I & F(I)\\
\hline
245 & 245\setminus \emptyset & x_{12345}z_{245}\\
25 & 245\setminus 4 & x_{12345}y_{4}z_{25}\\
24  & 245\setminus 5 & x_{12345}y_{5}z_{24}\\
2 & 245\setminus 45 & x_{12345}y_{45}z_{2}
\end{array}
\]
\end{example}

We are now ready to state our main result.
\begin{theorem}\label{thm:shelling}
Let $M$ be a matroid with linear order $<$ on the ground set, $E$. For any linear extension of Las Vergnas's external/internal order on the independent sets of $M$, the corresponding ordering of the facets of $\Delta_M$ is a shelling order.
\end{theorem}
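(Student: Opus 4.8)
The plan is to verify the shelling condition of \cref{def:shellable} directly. Fix a linear extension $\prec$ of $\leq_{ext/int}$ and let $I \prec K$ be independent sets of $M$; as $\Delta_M$ is pure it suffices to exhibit an independent set $J$ with $J \prec K$ and a vertex $v \in F(K)$ so that $F(I) \cap F(K) \subseteq F(J) \cap F(K) = F(K) \setminus v$. Since $\prec$ extends $\leq_{ext/int}$ we have $K \not\leq_{ext/int} I$, and I would split according to whether $I$ and $K$ are internally related. If they are, write $I = C \setminus Y_I$, $K = C \setminus Y_K$ with $Y_I, Y_K \subseteq \IA(C)$; then $K \not\leq_{ext/int} I$ forces $Y_I \not\subseteq Y_K$ by \cref{def:extint}, so pick $e \in Y_I \setminus Y_K$ and set $J := C \setminus (Y_K \cup e)$, $v := z_e$. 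Here $J$ is related to $C$ with $J \subsetneq K$, hence $J <_{ext/int} K$ and $J \prec K$; and using $F(C \setminus Y) = F(C) \setminus z_Y \cup y_Y$ for subsets internally related to $C$ (the corollary following \cref{prop:Suppz}), a direct comparison of $x$-, $y$- and $z$-supports gives $F(J) \cap F(K) = F(K) \setminus z_e$, while $z_e \notin F(I)$ because $e \in Y_I$. So in the internally related case nothing new is needed.

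The main case is when $I$ and $K$ are related to \emph{distinct} bases $A$ and $C$. By the proposition following \cref{def:extint}, $K \not\leq_{ext/int} I$ is equivalent to $C \not\leq_{ext/int} A$, so \cref{lem:acs} supplies a basis $B = C \setminus c \cup b$ with $B <_{ext/int} C$ and $c \in \IP(C) \cap \EP(A) \cap \EP(B)$, whose part (6) records in particular that $\supp_x(F(C)) \subseteq \supp_x(F(B))$, that $(C \cup \EA(C)) \setminus c \subseteq \supp_z(F(B)) = B \cup \EA(B)$, and that $c \notin B \cup \EA(B)$. I would take $v := z_c$ --- a vertex of $F(K)$, since $c \notin Y_K$ (as $c \in \IP(C)$) puts $c \in \supp_z(F(K))$ --- and $J := B \setminus Y_K$. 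The inclusion $F(I) \cap F(K) \subseteq F(K) \setminus z_c$ is immediate: $c \notin A \supseteq I$ and $c \in \EP(A) = \EP(I)$ (\cref{prop:lvprop}) give $z_c \notin F(I)$. Provided $Y_K \subseteq \IA(B)$, the independent set $J = B \setminus Y_K$ is related to $B$ (\cref{prop:delete}), hence $J <_{ext/int} K$ (again by the proposition following \cref{def:extint}, using $B <_{ext/int} C$) and so $J \prec K$; and a short bookkeeping of $x$-, $y$- and $z$-supports --- using \cref{prop:Suppx,prop:Suppz} on the pairs $(J,B)$ and $(K,C)$ together with the containments above, noting $\supp_y(F(J)) = Y_K = \supp_y(F(K))$ --- shows $F(K) \setminus z_c \subseteq F(J)$ and $z_c \notin F(J)$, i.e. $F(J) \cap F(K) = F(K) \setminus z_c$.

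Thus the whole theorem reduces to a single matroid-theoretic claim: for the exchanged basis $B = C \setminus c \cup b$ of \cref{lem:acs}, every internally active element of $C$ other than $c$ stays internally active in $B$, so that $Y_K \subseteq \IA(C) \setminus c \subseteq \IA(B)$ and $J := B \setminus Y_K$ is legitimately related to $B$. I expect this to be the main obstacle; it has no analogue in \cite{ACS}, where one only ever compares the bases $B$ and $C$, and it is precisely the ``new obstacle'' flagged in the introduction. To prove it I would track fundamental cocircuits across the exchange $C \rightsquigarrow B$: for $e \in \IA(C) \setminus c$ with $b \notin C^*(e,C)$ the set $C^*(e,C)$ is again the unique fundamental cocircuit of $e$ in $B$, so $e$ remains internally active with nothing to check; the delicate subcase is $b \in C^*(e,C)$, where one must use the precise choice of $b$ in the Ardila--Castillo--Samper construction either to rule the subcase out or to show $e$ is still the maximum of $C^*(e,B)$. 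If the clean containment $\IA(C) \setminus c \subseteq \IA(B)$ were to fail in general, one would instead have to refine the exchange produced by \cref{lem:acs}, or argue with a different pair $(v,J)$, or interpose intermediate independent sets between $J$ and $K$; but I would first try to prove the containment outright. Everything else in the argument --- purity of $\Delta_M$, the choice $v = z_c$, the relation $J \prec K$, and the inclusion $F(I) \cap F(K) \subseteq F(K) \setminus z_c$ --- is routine given \cref{lem:acs} and \cref{prop:Suppx,prop:Suppz}.
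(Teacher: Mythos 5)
Your reduction is, in structure, exactly the paper's own: the same split into the internally related case and the distinct-bases case, the same choices in each (in the first case $c\in K\setminus I=Y_I\setminus Y_K$ and $J=K\setminus c$, as in \cref{lem:case1constructJ}; in the second case $B=C\setminus c\cup b$ from \cref{lem:acs}, the vertex $z_c$, and $J=B\setminus Y_K$, as in \cref{lem:case2constructJ}), and the same $x$-, $y$-, $z$-support bookkeeping as in \cref{lem:case1eq,lem:case2eq}. But the one step you defer --- that every internally active element of $C$ other than $c$ remains internally active in $B$, so that $Y_K\subseteq\IA(B)$ and $J=B\setminus Y_K$ is genuinely related to $B$ --- is precisely the new content of the paper's argument, namely \cref{lem:acs2.0}, and you do not prove it: you sketch a strategy via fundamental cocircuits, concede a ``delicate subcase'' (when $b$ lies in the fundamental cocircuit of $e$ with respect to $C$) that you neither rule out nor resolve, and you hedge about whether the containment holds at all. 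As submitted, the proof is therefore incomplete at exactly the point where it departs from \cite{ACS}.

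The containment is true, and the paper's proof of \cref{lem:acs2.0} avoids cocircuits entirely, arguing order-theoretically. From $B\leq_{ext/int}C$ one has $\IP(B)\cup\EA(B)\subseteq\IP(C)\cup\EA(C)$ by \cref{LVextint}; since $c\in\EP(B)$ by \cref{lem:acs}(5), the right-hand side may be replaced by $(X\cap\IP(C))\cup\EA(C)$ where $X:=C\setminus c=B\setminus b$, and \cref{lem:acs}(4) gives $\EA(C)\subseteq\EA(B)\cup b$. Splitting on whether or not $b\in\IP(B)$, one gets in either case $X\cap\IP(B)\subseteq X\cap\IP(C)$, and complementing inside $X$ yields $X\cap\IA(C)\subseteq\IA(B)$; finally $c\in\IP(C)$ gives $\IA(C)=X\cap\IA(C)$, hence $\IA(C)\subseteq\IA(B)$. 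With this lemma supplied, your argument closes and your contingency plans are unnecessary. One small further correction: in the distinct-bases case the equality $\EP(A)=\EP(I)$ you cite is false in general (by \cref{prop:lvprop} one has $\EP(I)=\EP(A)\cup(A\setminus I)$); what you need, and what the paper extracts via \cref{prop:Suppx}, is only the containment $\EP(A)\subseteq\EP(I)$, which does give $c\in\EP(I)$ and hence $z_c\notin F(I)$.
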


The proof of this theorem will occupy the rest of this section. We outline the proof with an example in \cref{subsec:mainThmOutline} and then we will provide all the details in \cref{subsec:mainThmProof}. 

\subsection{Proof outline and example}\label{subsec:mainThmOutline}
Let $\prec$ be a linear extension of Las Vergnas's external/internal order on the collection of independent sets of $M$. Recall, by \cref{def:shellable}, $\prec$ gives a shelling order of the augmented external activity complex if for independent sets $I \prec K$ there exists an independent set $J \prec K$ and $c\in E$ so that 
\begin{align*}
    F(I) \cap F(K) \subset F(J) \cap F(K) = F(K) \setminus c^{xyz}
\end{align*}
where $c^{xyz}$ denotes one of $x_c$, $y_c$ or $z_c$. In our proof, given any $I \prec K$ we will produce an independent set $J \prec K$ and $c\in E$ satisfying,
\begin{align}
    \label{eq:shellingPropertyInd}\tag{$*$}
    F(I) \cap F(K) \subset F(J) \cap F(K) = F(K) \setminus z_c.
\end{align}

To do this, we will need to consider two cases. The first case occurs when $I$ and $K$ are internally related and the second case is when they are not. We now provide an example for each case. We begin with the case that $I$ and $K$ are internally related. 

Consider the linear extension of $\leq_{ext/int}$ on the independent sets from our running example indicated below.  
\[ \begin{array}{ccccccccccc}
 & \hspace{-1em}I &  & & &  &\hspace{-1em} K &  &  &  & \\
\emptyset \prec & 3 \prec& 4 \prec& 5\prec & 34 \prec& 35\prec & 45\prec & 345\prec & \cdots \prec & 124
\end{array}
\]
The sets $I = 3$ and $K = 45$ (suppressing brackets) are indicated here with $I \prec K$. These sets are internally related to the basis $C=345$, which has $\IA(C)=345$. See that,
\[ \begin{array}{ccccccc}
I &=& C\setminus Y_I, &  & K & = & C\setminus Y_K,\\
3 &=& 345\setminus 45, &  & 45 & = & 345\setminus 3.
\end{array}
\]
Since $I\prec K$ are internally related, this implies that $K\not\leq_{ext/int}I$ and $K\not\subset I$. Choose $c\in K\setminus I$ and notice that $c\in \IA(C)$. 

Now we define $J:=K\setminus c$ and argue that this is the requisite independent set $J$ in \eqref{eq:shellingPropertyInd}. In our example, we see that $K\setminus I=45$ and we will choose $c=4$, which is in $\IA(C) = 345$. We have,
\[
\begin{array}{ccc}
J&:= &K\setminus c \\
5 &:= &45\setminus 4.
\end{array}
\]
Here is a figure showing $I$, $J$ and $K$ in the external/internal order on independent sets (only part of the poset is shown here).
\[
\begin{tikzpicture}[scale=2]
\node (345) at (0,1) { $\boldsymbol{C}=345$};
\node (45) at (.5,.5) {$\boldsymbol{K}=45$};
\node (35) at (0,.5) {\tiny $35$};
\node (34) at (-.5,.5) {\tiny $34$};
\node (5) at (.5,0) {$\boldsymbol{J}=5$};
\node (4) at (0,0) {\tiny $4$};
\node (3) at (-.5,0) {$\boldsymbol{I}=3$};
\node (0) at (0,-.5) {\tiny $\emptyset$};
\draw (0)--(3)--(34)--(345);
\draw (0)--(5)--(45)--(345);
\draw (3)--(35);
\draw (4)--(34);
\draw (4)--(45);
\draw (5)--(35);
\draw (0)--(4);
\draw (35)--(345);
\node (1) at (-1,1.5) {};
\node (2) at (1,1.5) {};
\draw[dashed] (345)--(1);
\draw[dashed] (345)--(2);
\end{tikzpicture}
\]
Recall that the facet corresponding to the basis $C$ is $F(C)=x_{12345}z_{345}$. For the independent sets $I$, $J$ and $K$ internally related to $C$ we have the corresponding facets, 
\[
\begin{array}{ccc}
    F(I)=x_{12345}y_{45}z_{3}, & F(J)=x_{12345}y_{34}z_{5}, & F(K)=x_{12345}y_{3}z_{45}.
\end{array}
\]
Comparing the facets gives the desired result:
\[
\begin{array}{ccccc}
    F(I)\cap F(K) &\subset &F(J)\cap F(K) &= &F(K)\setminus z_c  \\
    x_{12345} &\subset &x_{12345}y_{3}z_{5} &= &F(45)\setminus z_4.
\end{array}
\]
Summarizing, given $I \prec K$ that are internally related to $C$, we will show that $K \not\subset I$ and that there is $c \in K\setminus I \subset  \IA(C)$. We will then prove that $J = K \setminus c$ is a set and $c$ is an element that satisfies \eqref{eq:shellingPropertyInd}.

We proceed to the case that $I$ and $K$ are not internally related. Consider the following linear extension of $\leq_{ext/int}$ on the independent sets from our running example shown below.
\[ \begin{array}{ccccccccccccc}
  &  & &  &  &  &  & \hspace{-1em}I &  & \hspace{-1em}K &  & \\
 \cdots \prec & 345 \prec &\cdots \prec& 245 \prec& \cdots \prec & 135 \prec& \cdots \prec & 23 \prec& \cdots\prec & 14 \prec& \cdots \prec& 124
\end{array}
\]

The sets $I=23$ and $K=14$ are indicated here with $I\prec K$. The set $I$ is internally related to the basis $A=235$, which has $\IA(A)=5$ and the set $K$ is internally related to the basis $C=134$, which has $\IA(C)=3$. See that,

\[ \begin{array}{cccccc}
I & = A\setminus Y_A, &  & K & = & C\setminus Y,\\
23 & = 235\setminus 5, &  & 14 & = & 134\setminus 3.
\end{array}
\]

To produce the required independent set $J$ that satisfies \eqref{eq:shellingPropertyInd}, we first consider the bases $A$ and $C$ and choose a basis $B$ that satisfies \cref{lem:acs}. We then construct the independent set $J$ from the basis $B$ by deleting a subset of its internally active elements.

Observe that since $I$ and $K$ are independent sets related to bases $A$ and $C$, respectively with $K\not\leq_{ext/int} I$ then $C \not\leq_{ext/int} A$. Choose a basis $B=C\setminus c \cup b$ that satisfies \cref{lem:acs}. In this example, we choose $c=4$ and $b=5$ so that,
\begin{align*}
B &=C\setminus c \cup b\\
135 & = 134 \setminus 4 \cup 5.
\end{align*}
Here is a figure showing $A$, $B$ and $C$ in the external/internal order on bases. 
\[
\begin{tikzpicture}[scale=1]
\node (0) at (0,0) {\tiny $345$};
\node (1) at (-1,1.5) {$\boldsymbol{B} = 135$};
\node (2) at (1,1) {\tiny $245$};
\node (23) at (1,2) {$\boldsymbol{A}=235$};
\node (123) at (0,3) {\tiny $125$};
\node (145) at (-2,3) {$\boldsymbol{C}=134$};
\node (2345) at (2,3) {\tiny $234$};
\node (12345) at (0,4.5) {\tiny $124$};
\draw (0)--(1)--(123)--(12345);
\draw (1)--(145)--(12345);
\draw (0)--(2)--(23)--(123);
\draw (23)--(2345)--(12345);
\end{tikzpicture}
\]
Recall for the bases $A$, $B$ and $C$, we have the corresponding facets,
\[
\begin{array}{ccc}
    F(A) = x_{12345}z_{235}, & F(B)=x_{12345}z_{135}, & F(C)=x_{1234}z_{1345}.
\end{array}
\]
Comparing the facets for these bases gives,
\[
\begin{array}{ccccc}
    F(A)\cap F(C) &\subset &F(B)\cap F(C) &= &F(C)\setminus z_c  \\
    x_{1234}z_{35} &\subset &x_{1234}z_{135} &= &F(134)\setminus z_4.
\end{array}
\]
We now construct the set $J$ from the basis $B$. We claim that the basis $B$ produced necessarily has $Y\subset \IA(C)\subset \IA(B)$. In this example, we observe that,
\[ 
\begin{array}{ccccc}
    Y &\subset & \IA(C) &\subset &\IA(B)\\ 
   3 &\subset &3 &\subset &35.
\end{array}
\]
The above inclusion allows us to define $J:=B\setminus Y$ so that $J$ is internally related to $B$. We argue that $J$ is the desired independent set that is a witness to \eqref{eq:shellingPropertyInd}. In our example, we have
\begin{align*}
J&:=B\setminus Y\\
15 &:= 135\setminus 3.
\end{align*}
Here is a figure showing $I$, $J$ and $K$ in the external/internal order on independent sets.
\[
\begin{tikzpicture}[scale=1.5]
\node (345) at (0,1) {\tiny $345$};
\node (45) at (.5,.5) {\tiny $45$};
\node (35) at (0,.5) {\tiny $35$};
\node (34) at (-.5,.5) {\tiny $34$};
\node (5) at (.5,0) {\tiny $5$};
\node (4) at (0,0) {\tiny $4$};
\node (3) at (-.5,0) {\tiny $3$};
\node (0) at (0,-.5) {\tiny $\emptyset$};
\draw (0)--(3)--(34)--(345);
\draw (0)--(5)--(45)--(345);
\draw (3)--(35);
\draw (4)--(34);
\draw (4)--(45);
\draw (5)--(35);
\draw (0)--(4);
\draw (35)--(345);
%135
\node (135) at (-1,3.25) {\tiny $B=135$};
\node (13) at (-1.5,2.75) {\tiny $13$};
\node (15) at (-.5,2.75) {$\boldsymbol{J}=15$};
\node (1) at (-1,2.25) {\tiny $1$};
\draw (1)--(13)--(135);
\draw (1)--(15)--(135);
%245
\node (2) at (1,1.5) {\tiny $2$};
\node (25) at (.5,2) {\tiny $25$};
\node (24) at (1.5,2) {\tiny $24$};
\node (245) at (1,2.5) {\tiny $245$};
\draw (2)--(25)--(245);
\draw (2)--(24)--(245);
%235
\node (235) at (1,3.5) {\tiny $A=235$};
\node (23) at (1,3) {$\boldsymbol{I}=23$};
\draw (23)--(235);
%134
\node (134) at (-2,5) {\tiny $C=134$};
\node (14) at (-2,4.5) {$\boldsymbol{K}=14$};
\draw (14)--(134);
%125
\node (125) at (0,5) {\tiny $125$};
\node (12) at (0,4.5) {\tiny $12$};
\draw (12)--(125);
%234
\node (234) at (2,4.75) {\tiny $234$};
%124
\node (124) at (0,6) {\tiny $124$};
\draw (345)--(1);
\draw (345)--(2);
\draw (245)--(23);
\draw (135)--(14);
\draw (135)--(12);
\draw (235)--(12);
\draw (235)--(234);
\draw (134)--(124);
\draw (125)--(124);
\draw (234)--(124);
\end{tikzpicture}
\]

For the independent sets $I$, $J$ and $K$ internally related to $A$, $B$ and $C$, respectively, recall that the corresponding facets of ${\Delta_M}$ are obtained by migrating a subset of internally active $z$'s to $y$'s. In our example we have,
\[
\begin{array}{ccc}
    F(I)=x_{12345}y_{5}z_{23}, & F(J)=x_{12345}y_{3}z_{15}, & F(K)=x_{1234}y_{3}z_{145}.
\end{array}
\]
Comparing the facets gives the following desired result:
\[
\begin{array}{ccccc}
    F(I)\cap F(K) &\subset &F(J)\cap F(K) &= &F(K)\setminus z_c , \\
    x_{1234} &\subset &x_{1234}y_{3}z_{15} &= &F(14)\setminus z_4.
\end{array}
\]

Summarizing, given $I \prec K$ that are internally related to the bases $A$ and $C$, we will show that $C \not\leq_{ext/int} A$ and choose a basis $B=C \setminus c \cup b$ that satisfies \cref{lem:acs}. Further, given that $K=C\setminus Y$ where $Y\subset \IA(C)$, we will show that $Y\subset \IA(C) \subset \IA(B)$. We will then prove that $J = B\setminus Y$ is an independent set and $c$ is an element that satisfies \eqref{eq:shellingPropertyInd}.

\subsection{Proof of the main theorem}\label{subsec:mainThmProof}
Fix a linear extension $\prec$ of the order $\leq_{ext/int}$ on $\mathcal{I}(M)$. Assume that $I \prec K$, equivalently, $K \not\leq_{ext/int} I$. We will produce an independent set $J$ and element $c$ such that $J<_{ext/int}K$ and \eqref{eq:shellingPropertyInd} holds.

Mirroring our example, we begin with case that
$I$ and $K$ are both internally related to the basis $C$.

\begin{lemma}\label{lem:case1constructJ} Suppose that $I$ and $K$ are independent sets internally related to a basis $C$ with $K\not\leq_{ext/int} I$. Then 
\begin{enumerate}
   \item $K\setminus I \subset \IA(C)$,
    \item there exists $c\in K\setminus I$, and
    \item the independent set $J:=K\setminus c$ is internally related to $C$ with $J<_{ext/int}K$.
    \item Also, $c\in \EP(I)$.
\end{enumerate}
\end{lemma}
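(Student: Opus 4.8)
The plan is to reduce all four claims to Crapo's partition of the independence complex. Since $I$ and $K$ are internally related to $C$, \cref{prop:crapo}(2) lets me write, uniquely, $I = C \setminus Y_I$ and $K = C \setminus Y_K$ with $Y_I, Y_K \subseteq \IA(C)$. With these two descriptions in hand, everything becomes a short set-theoretic check together with the specialization of \cref{def:extint} to internally related pairs.

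For (1) I would argue pointwise: if $e \in K \setminus I$ then $e \in K \subseteq C$ while $e \notin I = C \setminus Y_I$, which forces $e \in Y_I \subseteq \IA(C)$; hence $K \setminus I \subseteq \IA(C)$. For (2), \cref{def:extint} says that for the internally related pair $I,K$ the relation $K \leq_{ext/int} I$ holds if and only if $K \subseteq I$, so the hypothesis $K \not\leq_{ext/int} I$ says precisely that $K \setminus I \neq \emptyset$, and I may pick $c$ in this set. For (3), since $c \in K \setminus I \subseteq \IA(C)$ by (1), \cref{prop:delete} applied with the singleton $\{c\} \subseteq \IA(C)$ shows that $J := K \setminus c$ is internally related to $C$; then $J$ and $K$ are both internally related to $C$, so \cref{def:extint} again reduces the comparison $J <_{ext/int} K$ to the evident strict containment $K \setminus c \subsetneq K$. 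For (4), \cref{prop:lvprop} gives $\EA(I) = \EA(C)$, and $\EA(C) \subseteq E \setminus C$ by the definition of external activity, so $c \in C$ forces $c \notin \EA(I)$; combined with $c \notin I$ (because $c \in K \setminus I$) and the disjoint decomposition $E \setminus I = \EA(I) \sqcup \EP(I)$, this yields $c \in \EP(I)$.

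I do not expect a real obstacle in this lemma: it is the benign half of the shelling argument, where $I$ and $K$ already lie in a common Boolean interval $(B,C]$ as in \cref{prop:bool} and everything is governed by containment. The only points demanding a little care are keeping straight that "internally related" is exactly the hypothesis that unlocks \cref{prop:crapo}(2) and \cref{prop:delete}, and noticing that the chosen $c$ lies in the basis $C$ yet not in $I$ — which is what makes claim (4) go through even though no element of $C$ is ever externally active for $C$ itself. The genuine difficulty in \cref{thm:shelling} is deferred to the complementary case, where $I$ and $K$ are not internally related and one must pass through the basis-level statement \cref{lem:acs}.
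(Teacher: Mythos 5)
Your proof is correct and takes essentially the same route as the paper: both rest on Crapo's decomposition via \cref{prop:crapo}(2) and the specialization of \cref{def:extint} to internally related pairs, with only cosmetic differences (you argue (1) pointwise and (4) via $\EA(I)=\EA(C)$, while the paper uses $\IP(C)\subset I$, $K\subset C$ for (1) and $\supp_x$-equality from \cref{prop:Suppx} for (4) — these are complementary ways of stating the same fact).
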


\begin{proof}
    Since $I$ and $K$ are internally related to the basis $C$ then $C$ contains $K$ and $I$ contains $\IP(C)$ by \cref{prop:crapo}(2). It follows that $K\setminus I\subset C\setminus \IP(C) = \IA(C)$.
    The independent sets $I$ and $K$ are internally related with $K\not\leq_{ext/int}I$ and so by definition, $K\not\subset I$. This implies that there must exist $c\in K\setminus I\subset \IA(C)$.
    Suppose that $K=C\setminus Y$ where $Y\subset \IA(C)$. The independent set 
\begin{align*}
    J:= K\setminus c = C\setminus (Y\cup c) 
\end{align*} 
has $Y\cup c\subset \IA(C)$ and hence $J$ is internally related to the basis $C$. Moreover, $J \subset K$ so by definition, $J<_{ext/int}K$.
    Recall that $I$ is internally related to the basis $C$ so by \cref{prop:Suppx}, $I\cup \EP(I)=C\cup \EP(C)$. However, $c\in C$ and $c\notin I$ and thus $c\in \EP(I)$.
\end{proof}

\begin{lemma}\label{lem:case1eq} Suppose that $I$ and $K$ are independent sets internally related to a basis $C$ with $K\not\leq_{ext/int} I$. Let $J$  and $c \in K$ be an independent set and element as furnished by \cref{lem:case1constructJ}. Then,
\begin{enumerate}
    \item $J <_{ext/int} K$; in particular $J$ precedes $K$ in any linear extension of $\leq_{ext/int}$.
    \item $F(I)\cap F(K) \subset F(J)\cap F(K) =  F(K)\setminus z_c$.
\end{enumerate}
\end{lemma}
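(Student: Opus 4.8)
The plan is to deduce everything from the explicit normal forms of the three facets involved, using \cref{prop:Suppx,prop:Suppz}. Part (1) is already contained in \cref{lem:case1constructJ}(3); the parenthetical addition is immediate, since by hypothesis $\prec$ is a linear extension of $\leq_{ext/int}$, so $J <_{ext/int} K$ forces $J$ to appear before $K$ in $\prec$. The substantive content is part (2), and I would handle the equality and the inclusion separately.

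For the equality $F(J)\cap F(K) = F(K)\setminus z_c$, first I would write $K = C\setminus Y_K$ with $Y_K \subset \IA(C)$, so that \cref{prop:Suppx,prop:Suppz} give $F(K) = x_{C\cup \EP(C)}\,y_{Y_K}\,z_{(C\cup\EA(C))\setminus Y_K}$. Since $J = K\setminus c = C\setminus(Y_K\cup c)$ and $Y_K\cup c \subset \IA(C)$ (here $c\in K\setminus I\subset \IA(C)$ by \cref{lem:case1constructJ}, and $c\notin Y_K$ because $c\in K$), the same propositions give $F(J) = x_{C\cup\EP(C)}\,y_{Y_K\cup c}\,z_{(C\cup\EA(C))\setminus(Y_K\cup c)}$. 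Comparing the two facets blockwise: the $x$-supports coincide, the $y$-supports meet in $Y_K$, and the $z$-supports meet in $(C\cup\EA(C))\setminus(Y_K\cup c)$. Since $c\in C\setminus Y_K$, we have $z_c\in F(K)$, so this common face is precisely $F(K)$ with $z_c$ removed. This is exactly the observation that passing from $K$ to $J=K\setminus c$ migrates the single letter $c$ from the $z$-block to the $y$-block of the facet.

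For the inclusion $F(I)\cap F(K)\subseteq F(K)\setminus z_c$, since $F(I)\cap F(K)\subseteq F(K)$ trivially, it suffices to check that $z_c\notin F(I)$. I would invoke \cref{lem:case1constructJ}(4), which gives $c\in \EP(I)$; combined with $c\notin I$ (as $c\in K\setminus I$), this yields $c\notin I\cup\EA(I) = \supp_z(F(I))$, i.e.\ $z_c\notin F(I)$. (Equivalently one can argue from \cref{prop:Suppz} that $\supp_z(F(I)) = (C\cup\EA(C))\setminus Y_I$ and $c\in Y_I$, because $c\in C$ but $c\notin I$.) Putting the two parts together gives $F(I)\cap F(K)\subseteq F(K)\setminus z_c = F(J)\cap F(K)$, which is \eqref{eq:shellingPropertyInd}.

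I do not expect a genuine obstacle in this lemma: once the facets are recorded in normal form, every claim reduces to elementary bookkeeping with the sets $C$, $\EA(C)$, $\EP(C)$, $Y_I$, $Y_K$. The only point requiring care is tracking that the letter $c$ sits in the $z$-block of $F(K)$ but is absent from (or rather $y$-shifted in) $F(J)$ and absent from the $z$-block of $F(I)$ — this is what makes the codimension-one face land on $F(K)\setminus z_c$ and is the reason the whole construction is set up with $c\in K\setminus I$.
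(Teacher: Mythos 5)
Your proof is correct and follows essentially the same route as the paper: part (1) is read off from \cref{lem:case1constructJ}(3), and part (2) is verified by comparing the $x$-, $y$-, and $z$-supports of $F(I)$, $F(J)$, $F(K)$ via \cref{prop:Suppx,prop:Suppz}, with $c\in\EP(I)$ from \cref{lem:case1constructJ}(4) ensuring $z_c\notin F(I)$. Your only (harmless) streamlining is handling the inclusion $F(I)\cap F(K)\subseteq F(J)\cap F(K)$ by noting it suffices that $z_c\notin F(I)$, rather than repeating the blockwise comparison for $F(I)$ as the paper does.
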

\begin{proof}

The first claim is a restatement of \cref{lem:case1constructJ}(3) and the definition of being a linear extension.

For the second claim, we compare the supports of the corresponding facets. For the $x$-support, recall that $I$, $J$ and $K$ are all internally related to the basis $C$. By \cref{prop:Suppx}, $\supp_x(F(I))= \supp_x(F(J))= \supp_x(F(K))$ and hence,
    \begin{align*}
        \supp_x(F(I))\cap\supp_x(F(K)) = \supp_x(F(J))\cap \supp_x(F(K)) =\supp_x(F(K)).
    \end{align*}    
    
    We now consider the $y$-support of these facets, by first comparing the facets corresponding to $J$ and $K$. Suppose that $K=C\setminus Y$ and $J=C\setminus (Y\cup c)$ with $Y\cup c\subset \IA(C)$. We have,
    \begin{align*}
        \supp_y(F(J))\cap \supp_y(F(K)) &= (Y\cup c)\cap Y
        = Y
        = \supp_y(F(K)).
    \end{align*}
    This implies,
    \begin{align*}
        \supp_y(F(I))\cap\supp_y(F(K)) \subset \supp_y(F(K)) = \supp_y(F(J))\cap \supp_y(F(K)).
    \end{align*}
    
    For the $z$-support, again, we first compare the facets corresponding to $J$ and $K$ and note that $c\notin \EA(C)$ since $c \in K\setminus I \subset C$. We compute, 
    \begin{align*}
        \supp_z(F(J))\cap \supp_z(F(K)) &= (J \cup \EA(J)) \cap (K \cup \EA(K))\\
        &= (J \cap K) \cup \EA(C) \quad \textup{($\EA(J) = \EA(K) = \EA(C)$ by \cref{prop:lvprop})}\\
        &= (K\setminus c) \cup \EA(C)\\
        &=(K \cup \EA(K))\setminus c\\ 
        &= \supp_z(F(K))\setminus c.
    \end{align*}
    To include the facet corresponding to $I$ in our comparison, we recall that $c\in \EP(I)$  by \cref{lem:case1constructJ}(4). This implies that $c\notin I \cup \EA(I) =\supp_z(F(I))$. We conclude that
    \begin{align*}
        \supp_z(F(I))\cap\supp_z(F(K)) \subset \supp_z(F(K))\setminus c = \supp_z(F(J))\cap \supp_z(F(K)).
    \end{align*}
    This completes the proof of our lemma.
\end{proof}
%Do we want to be more explicit about "shelling property?"
This lemma proves that the required shelling property holds when $I$ and $K$ are internally related. We now consider the case that $I$ and $K$ are not internally related. We will take $I$ and $K$ related to bases $A$ and $C$, respectively, in what follows. The argument then proceeds in three lemmas: The first lemma builds on the results of \cite{ACS} discussed in  \cref{sec:acs}. The second lemma uses the first to define the independent set $J$ and element $c$ that verifies the shelling property. Finally, the third lemma proves that the proposed $J$ and $c$ defined satisfy the required shelling property and this completes the proof of our main theorem. 

\begin{lemma}\label{lem:acs2.0} Suppose that $A$ and $C$ are bases with $C\not\leq_{ext/int}A$ and produce a basis $B\leq_{ext/int}C$ with $B=C\setminus c\cup b$ as in \cref{lem:acs}. Then $\IA(C)\subset \IA(B)$.
\end{lemma}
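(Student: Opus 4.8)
The plan is to show $\IA(C) \subseteq \IA(B)$ by working with the characterization of internal activity in terms of basis exchanges: for a basis $D$, an element $i \in D$ is internally active if and only if there is no $i' > i$ with $i' \notin D$ and $D \setminus i \cup i'$ a basis. So let $i \in \IA(C)$; I want to show $i \in \IA(B)$. Recalling $B = C \setminus c \cup b$, I would first dispatch the easy structural points: by \cref{lem:acs}(5) we have $c \in \IP(C)$, so $i \neq c$ and hence $i \in B$; I will also want to know how $i$ compares to $c$ and $b$. Since $c \in \IP(C)$, internal activity of $i$ in $C$ is unaffected by $c$, and the natural first claim to establish is $b < c < i$ is \emph{not} forced, but rather that the relevant exchanges can be controlled — so the real content is comparing exchanges out of $B$ with exchanges out of $C$.

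The key step is the following: suppose for contradiction that $i \notin \IA(B)$, so there exists $i' > i$ with $i' \notin B$ and $B \setminus i \cup i'$ a basis. Note $i' \neq b$ (as $b \in B$) and $i' \neq c$ is possible (since $c \notin B$); I would split on whether $i' = c$ or $i' \neq c$. If $i' \neq c$, then $i' \notin C$ as well (because $B$ and $C$ differ only in $b,c$ and $i' \neq b$), and I want to upgrade "$B \setminus i \cup i'$ is a basis" to "$C \setminus i \cup i'$ is a basis", which would contradict $i \in \IA(C)$ via $i' > i$. This upgrade is a matroid exchange argument: from the two bases $B = C \setminus c \cup b$ and $B \setminus i \cup i' = C \setminus c \setminus i \cup b \cup i'$, I would use circuit/cocircuit exchange (or the symmetric exchange axiom) to produce the basis $C \setminus i \cup i'$ — essentially moving the element $c$ back in and $b$ back out while keeping $i \to i'$. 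If instead $i' = c$, then $B \setminus i \cup c = C \setminus i \cup b$ is a basis; since $i \in \IA(C)$ and $i' = c > i$ would directly contradict internal activity of $i$ in $C$ unless $c \in C$ — but $c \in C$, so actually $i' = c$ means $i' \in C$, and the contradiction with $\IA(C)$ (which only forbids exchanges with $i' \notin C$) does not apply directly; here I would instead need to use that $c \in \IP(C)$, i.e. $c$ is itself internally passive, which gives some $c' > c > i$ with $c' \notin C$ and $C \setminus c \cup c' = B \setminus b \cup c'$... and then chain the exchanges to contradict $i \in \IA(C)$.

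I expect the main obstacle to be exactly this case analysis in the exchange arguments — specifically, making sure that when I convert a basis exchange valid for $B$ into one valid for $C$, the element $i'$ witnessing passivity of $i$ is still $> i$ and still outside the target basis, and handling the boundary case $i' = c$ cleanly using $c \in \IP(C) \cap \EP(B)$ from \cref{lem:acs}(5). A cleaner route, which I would try first, is to avoid exchange bookkeeping entirely: use \cref{lem:acs}(3)–(4), which say $\EA(B)$ and $\EA(A)$, $\EA(C)$ agree away from the symmetric difference, together with duality. Since $\IA(D) = \EA_{M^\perp}(E \setminus D)$, the statement $\IA(C) \subseteq \IA(B)$ dualizes to a statement about external activities of cocircuits, and \cref{lem:acs}(4) applied in $M^\perp$ (with roles of $B$, $C$ swapped appropriately, noting $E \setminus B = (E\setminus C)\setminus b \cup c$) should give that external activity in $M^\perp$ can only grow when passing from $E \setminus C$ to $E \setminus B$ — but one must check the hypotheses of \cref{lem:acs} transfer to the dual, which is where I'd need to be careful. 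Whichever route, the proof is short: one contradiction argument plus one matroid exchange, with the $i' = c$ edge case being the only genuinely delicate point.
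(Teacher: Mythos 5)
Your proposal has genuine gaps in both of the routes you sketch, and neither resembles what the paper actually does. In the exchange route, the crucial step --- upgrading ``$B \setminus i \cup i'$ is a basis'' to ``$C \setminus i \cup i'$ is a basis'' when $i' \neq c$ --- is asserted but not proved, and it is not a consequence of the basis exchange axiom: since $B = C \setminus c \cup b$, it amounts to a two-element symmetric exchange (simultaneously removing $\{b,i\}$ and adding $\{c,i'\}$), which fails in general matroids and would need the extra structure of \cref{lem:acs} to rescue it, structure you do not invoke at that point. In the $i' = c$ subcase you implicitly assume $c > i$, which is nowhere justified, and the ``chain the exchanges'' step is left entirely open; chaining two exchange witnesses does not automatically produce a third. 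The dual route is also incomplete: \cref{lem:acs}(4) is an \emph{equality} of external activities off $B \cup C$, not a one-sided containment, so it does not by itself say that activity ``can only grow,'' and, as you flag yourself, you have not checked that the hypotheses of \cref{lem:acs} pass to $M^\perp$ (the external/internal order is dualized in a nontrivial way that needs care).

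The paper's proof sidesteps exchanges and duality entirely. Writing $X := C \setminus c$, it starts from the containment $\IP(B) \cup \EA(B) \subset \IP(C) \cup \EA(C)$, which is the characterization of $B \leq_{ext/int} C$ in \cref{LVextint}(4). Using $c \in \EP(B) \cap \IP(C)$ from \cref{lem:acs}(5) to remove $c$ from both sides, and $\EA(C) \subset \EA(B) \cup b$ (a consequence of \cref{lem:acs}(4)), a short case split on whether $b \in \IP(B)$ yields $X \cap \IP(B) \subset X \cap \IP(C)$. Complementing within $X$ gives $X \cap \IA(C) \subset \IA(B)$, and since $c \in \IP(C)$ forces $\IA(C) \subset X$, the lemma follows. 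This direct set-theoretic manipulation of the Las Vergnas characterization is the key idea your sketch is missing; I would suggest abandoning the exchange/duality framing and arguing with the containments themselves.
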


\begin{proof} 
Let us begin by defining $X:=C\setminus c$ so that $C=X\cup c$ and $B=X\cup b$. Since $B\leq_{ext/int}C$ by \cref{LVextint} we have, 
\begin{align*}
    \IP(B)\cup \EA(B) \subset \IP(C)\cup \EA(C).
    \end{align*}
Recall, that $c\in \EP(B)$ by \cref{lem:acs}(5). This implies that $c\notin \IP(B)\cup \EA(B)$ and thus,
\begin{align*}
    \IP(B)\cup \EA(B) &\subset (X\cap \IP(C)) \cup \EA(C).
\end{align*}
Further, \cref{lem:acs}(4) yields, $\EA(C)\subset \EA(B)\cup b$ and this implies,
\begin{align*}
    \IP(B)\cup \EA(B) &\subset (X\cap \IP(C))\cup \EA(B)\cup b.
\end{align*}
Next, we will consider two cases according to the internal activity of $b\in B$. If it is the case that $b\in \IP(B)$ then,
\begin{align*}
    (X\cap \IP(B)) \cup \EA(B) \cup b &\subset (X\cap \IP(C)) \cup \EA(B) \cup b.
\end{align*}
We conclude that $X\cap \IP(B)\subset X\cap \IP(C)$. If it is the case that $b\notin \IP(B)$ then $\IP(B) = X\cap \IP(B)$ and so,
\begin{align*}
    (X\cap \IP(B)) \cup \EA(B) &\subset (X\cap \IP(C)) \cup \EA(B).
\end{align*}
Again, we conclude that $X\cap \IP(B)\subset X\cap \IP(C)$. In both cases, taking the complement in $X$, gives 
\begin{align*}
    X\cap \IA(C) \subset X\cap \IA(B) \subset \IA(B).
\end{align*}
Recall, $c\in \IP(C)$ by \cref{lem:acs}(5) and so $X \cap \IA(C)=\IA(C)$ and the result follows.
\end{proof}
We are now ready to construct the needed independent set $J$.
\begin{lemma}\label{lem:case2constructJ}
Suppose that the independent sets $K\not\leq_{ext/int}I$ are not internally related. Further suppose that $A$ and $C$ are the distinct bases internally related to $I$ and $K$, respectively. Write $K = C \setminus Y$ where $Y \subset \IA(C)$. 

Observe that $C\not\leq_{ext/int}A$ and let $B<_{ext/int} C$ and $c$ be a basis and element produced as in \cref{lem:acs} from $A$ and $C$. We have,
\begin{enumerate}
\item $Y\subset \IA(B)$,
\item the independent set $J:=B\setminus Y$ is internally related to $B$,
\item $J<_{ext/int}K$, and
\item $c\in \EP(I)$.
\end{enumerate}
\end{lemma}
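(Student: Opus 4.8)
The plan is to deduce all four claims from the machinery already assembled, in particular from \cref{lem:acs} and \cref{lem:acs2.0}, with only light bookkeeping. First I would justify the ``observation'' that $C\not\leq_{ext/int}A$: since $I$ is related to $A$, $K$ is related to $C$, and $A\neq C$, the proposition comparing $\leq_{ext/int}$ on independent sets related to distinct bases says that $K\leq_{ext/int}I$ if and only if $C\leq_{ext/int}A$; as $K\not\leq_{ext/int}I$ by hypothesis, we conclude $C\not\leq_{ext/int}A$. This is exactly what licenses the application of \cref{lem:acs} to the pair $(A,C)$, yielding the basis $B<_{ext/int}C$ with $B=C\setminus c\cup b$ together with the element $c$.

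For (1) I would simply chain inclusions: $Y\subset \IA(C)$ by hypothesis and $\IA(C)\subset \IA(B)$ by \cref{lem:acs2.0}, so $Y\subset \IA(B)$. For (2), claim (1) exhibits $J:=B\setminus Y$ as a basis with a subset of its internally active elements removed, which is precisely the defining condition for $J$ to be internally related to $B$; by \cref{prop:crapo}(2), $B$ is the unique basis to which $J$ is related, so there is no ambiguity.

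For (3) I would again invoke the proposition on independent sets related to distinct bases: $J$ is related to $B$, $K$ is related to $C$, and $B\neq C$ (as $B<_{ext/int}C$), so $J\leq_{ext/int}K$ follows from $B\leq_{ext/int}C$; moreover $J$ and $K$, being related to distinct bases, are distinct independent sets (each independent set is related to a unique basis), hence $J<_{ext/int}K$, and in particular $J$ precedes $K$ in the fixed linear extension $\prec$. For (4), \cref{lem:acs}(5) gives $c\in \EP(A)$, while \cref{prop:lvprop} applied to $I$ (related to $A$) gives $\EA(I)=\EA(A)$; combined with $I\subset A$ this forces $\EP(A)\subset \EP(I)$, whence $c\in \EP(I)$.

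There is no deep obstacle here; this lemma is essentially a repackaging of \cref{lem:acs} and \cref{lem:acs2.0} into the language of independent sets. The only points requiring a moment of care are, in (4), not to identify $\EP(I)$ with $\EP(A)$ outright — they are related only by the inclusion $\EP(A)\subset \EP(I)$, which fortunately runs in the direction we need — and, in (3), to verify $B\neq C$ and $J\neq K$ so that the relation is genuinely strict rather than merely $\leq_{ext/int}$.
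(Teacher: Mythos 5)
Your proof is correct and follows essentially the same route as the paper's: claims (1)--(3) are handled identically via \cref{lem:acs2.0} and the proposition on independent sets related to distinct bases, and your argument for (4) (using $\EA(I)=\EA(A)$ from \cref{prop:lvprop} plus $I\subset A$) is just an inline rederivation of \cref{prop:Suppx}, which is what the paper cites directly. Your explicit justification of the observation $C\not\leq_{ext/int}A$ is a welcome detail the paper leaves implicit.
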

\begin{proof}
The independent set $K=C\setminus Y$ has $Y\subset \IA(C)$ and applying \cref{lem:acs2.0} gives the desired inclusion, $Y\subset \IA(C) \subset \IA(B)$.

By definition, $J:=B\setminus Y$ where $Y\subset \IA(B)$ by (1) and therefore $J$ is an independent set internally related to $B$.

We have that the independent sets $J$ and $K$ are internally related to the bases $B$ and $C$, respectively, with $B<_{ext/int}C$ and this gives $J<_{ext/int}K$.

Recall that $c\in \EP(A)$ by \cref{lem:acs}(5) and $I$ is internally related to $A$. \cref{prop:Suppx} yields $I\cup \EP(I)=A\cup \EP(A)$. However, $c\notin A\supset I$ and therefore $c\in \EP(I)$. 
\end{proof}
We conclude by proving that the $J$ produced in the previous lemma satisfies \eqref{eq:shellingPropertyInd}.
\begin{lemma}\label{lem:case2eq} 
Maintaining the notation from \cref{lem:case2constructJ}, we have
\begin{enumerate}
    \item $J <_{ext/int} K$; in particular $J$ precedes $K$ in any linear extension of $\leq_{ext/int}$.
    \item $F(I)\cap F(K) \subset F(J)\cap F(K) =  F(K)\setminus z_c$.
\end{enumerate}
\end{lemma}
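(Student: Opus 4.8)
The plan is to treat the two assertions separately. Part~(1) needs no new work: it is precisely \cref{lem:case2constructJ}(3), and by definition a linear extension $\prec$ of $\leq_{ext/int}$ places $J$ before $K$ whenever $J<_{ext/int}K$. For part~(2) I would establish the facet relation one coordinate family at a time, comparing the $x$-, $y$- and $z$-supports of $F(I)$, $F(J)$ and $F(K)$, and then reassemble. The recurring input will be \cref{lem:acs}(6), which records the analogous inclusion $F(A)\cap F(C)\subset F(B)\cap F(C)=F(C)\setminus z_c$ at the level of the related bases $A$, $B$, $C$, together with \cref{prop:Suppx}, \cref{prop:Suppz} and \cref{prop:lvprop}, which transport support data between an independent set and the basis it is internally related to.

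For the $x$-support, since $I$, $J$, $K$ are internally related to $A$, $B$, $C$, \cref{prop:Suppx} identifies $\supp_x(F(I))$, $\supp_x(F(J))$, $\supp_x(F(K))$ with $\supp_x(F(A))$, $\supp_x(F(B))$, $\supp_x(F(C))$, and the $x$-part of \cref{lem:acs}(6) then gives $\supp_x(F(I))\cap\supp_x(F(K))\subseteq\supp_x(F(J))\cap\supp_x(F(K))=\supp_x(F(K))$. For the $y$-support, $\supp_y(F(J))=\supp_y(F(K))=Y$ by the construction of $J$, while $\supp_y(F(I))=Y_I$ where $I=A\setminus Y_I$, so $\supp_y(F(I))\cap\supp_y(F(K))=Y_I\cap Y\subseteq Y=\supp_y(F(J))\cap\supp_y(F(K))$. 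For the $z$-support I would use \cref{prop:Suppz} and \cref{prop:lvprop} to write $\supp_z(F(J))=\supp_z(F(B))\setminus Y$ and $\supp_z(F(K))=\supp_z(F(C))\setminus Y$; intersecting and invoking the $z$-part of \cref{lem:acs}(6) yields $\supp_z(F(J))\cap\supp_z(F(K))=(\supp_z(F(C))\setminus c)\setminus Y$. Because $c\in\IP(C)$ by \cref{lem:acs}(5) while $Y\subseteq\IA(C)$, we have $c\notin Y$, so this set equals $\supp_z(F(K))\setminus c$, which is exactly the $z$-support of $F(K)\setminus z_c$; since $F(K)\setminus z_c$ differs from $F(K)$ only by the coordinate $z_c$, the $x$- and $y$-support computations above also show $F(J)\cap F(K)=F(K)\setminus z_c$ as faces.

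The one place where care is genuinely needed is checking that $F(I)$ misses the coordinate $z_c$, i.e.\ that $c\notin\supp_z(F(I))=I\cup\EA(I)$: this holds since $c\notin A\supseteq I$ by the choice of $c$ in \cref{lem:acs} and $c\in\EP(I)$ by \cref{lem:case2constructJ}(4). Consequently $\supp_z(F(I))\cap\supp_z(F(K))\subseteq\supp_z(F(K))\setminus c=\supp_z(F(J))\cap\supp_z(F(K))$. Assembling the three coordinatewise inclusions gives $F(I)\cap F(K)\subset F(J)\cap F(K)=F(K)\setminus z_c$, establishing \eqref{eq:shellingPropertyInd} in the non-internally-related case and thereby completing the proof of \cref{thm:shelling}. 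I expect no serious obstacle at this stage: the real difficulties were already dispatched in \cref{lem:acs} (importing the \cite{ACS} analysis of bases) and in \cref{lem:acs2.0}--\cref{lem:case2constructJ} (upgrading it to independent sets via the inclusion $\IA(C)\subset\IA(B)$), and what remains is bookkeeping whose only subtle points are the facts $c\notin Y$ and $c\notin\supp_z(F(I))$.
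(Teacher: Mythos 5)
Your proposal is correct and follows essentially the same route as the paper: you split the verification of \eqref{eq:shellingPropertyInd} by $x$-, $y$-, and $z$-supports, transport the basis-level inclusion of \cref{lem:acs}(6) to the independent sets $I$, $J$, $K$ via \cref{prop:Suppx} and \cref{prop:Suppz}, and use $c\in\IP(C)$, $Y\subseteq\IA(C)$, and $c\in\EP(I)$ to handle the $z$-coordinate exactly as the paper does. The only cosmetic difference is that you explicitly compute $\supp_y(F(I))=Y_I$ before intersecting, whereas the paper shortcuts directly to $\supp_y(F(I))\cap\supp_y(F(K))\subseteq\supp_y(F(K))$; this changes nothing.
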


\begin{proof}
The first claim is a restatement of \cref{lem:case2constructJ}(3) and the definition of being a linear extension.

For the second claim, we compare the supports of the facets $F(I)$,  $F(J)$ and $F(K)$. For the $x$-support, first observe that by \cref{lem:acs}(6), the facets corresponding to the bases $A$, $B$ and $C$ satisfy 
\begin{align*}
    \supp_x(F(A))\cap\supp_x(F(C)) \subset \supp_x(F(B))\cap \supp_x(F(C)) =\supp_x(F(C)).
\end{align*}
Since $I$, $J$ and $K$ are internally related to the bases $A$, $B$ and $C$, respectively, by \cref{prop:Suppx}, this gives $\supp_x(F(I))= \supp_x(F(A))$, $\supp_x(F(J))= \supp_x(F(B))$ and $\supp_x(F(K))=\supp_x(F(C))$. We conclude that,
    \begin{align*}
        \supp_x(F(I))\cap\supp_x(F(K)) \subset \supp_x(F(J))\cap \supp_x(F(K)) =\supp_x(F(K)).
    \end{align*}  
    We now consider the $y$-support of the facets $F(I)$, $F(J)$ and $F(K)$ by first comparing the facets corresponding to $J$ and $K$. By definition of $K$ and construction of $J$, 
    \[\supp_y(F(J))\cap \supp_y(F(K)) = Y = \supp_y(F(K)).
    \]
    This implies,
    \begin{align*}
        \supp_y(F(I))\cap\supp_y(F(K)) \subset \supp_y(F(K)) = \supp_y(F(J))\cap \supp_y(F(K)).
    \end{align*}

    For the $z$-support, we first observe that by \cref{lem:acs2.0}(6), the facets corresponding to the bases $A$, $B$ and $C$ satisfy 
\begin{align*}
    \supp_z(F(A))\cap\supp_z(F(C)) \subset \supp_z(F(B))\cap \supp_z(F(C)) =\supp_z(F(C))\setminus c.
\end{align*}
We wish to compare the facets corresponding to $J$ and $K$, first. Making use of \cref{prop:Suppz} we write the $z$-supports of $J$ and $K$ in terms of $z$-supports of their corresponding bases. This gives $\supp_z(F(J))= \supp_z(F(B))\setminus Y$ and $\supp_z(F(K))=\supp_z(F(C))\setminus Y$.
Noting that $c\in \IP(C)$, and $Y\subset \IA(C)$ we have the following, 
\begin{align*}
    \supp_z(F(B))\cap \supp_z(F(C)) &=\supp_z(F(C))\setminus c,\\
    (\supp_z(F(B))\cap \supp_z(F(C))) \setminus Y &=(\supp_z(F(C))\setminus c)\setminus Y,\\
    (\supp_z(F(B))\setminus Y)\cap (\supp_z(F(C)) \setminus Y) &=(\supp_z(F(C))\setminus Y)\setminus c,\\
    \supp_z(F(J))\cap \supp_z(F(K)) &=\supp_z(F(K))\setminus c.
\end{align*}
    To include the facet corresponding to $I$ in our comparison, we recall from \cref{lem:case2constructJ}(4) that $c\in \EP(I)$ and so $c\notin I \cup \EA(I) =\supp_z(F(I))$. We conclude that
    \begin{align*}
        \supp_z(F(I))\cap\supp_z(F(K)) \subset \supp_z(F(K))\setminus c = \supp_z(F(J))\cap \supp_z(F(K)).
    \end{align*}
    This completes the proof.
\end{proof}

\section{Restriction Sets}\label{sec:restriction}
Here we investigate the restriction sets of two different families of shelling orders of $\Delta_M$. In one case we find that the restriction sets form (a complex isomorphic to) the independence complex of $M$. In the second case the restriction sets give rise to a two-variable enrichment of the $h$-polynomial that has a nice expression in terms of the Tutte polynomial.
\subsection{Restriction sets of linear extensions of $\leq_{ext/int}$}
\begin{proposition}\label{prop:RestrictionSets}
Let $\prec$ be a linear extension of $\leq_{ext/int}$ on $\mathcal{I}(M)$. For each independent set $I$ of $M$, then the restriction set of $F(I)$ in this order is equal to $z_I$. 
\end{proposition}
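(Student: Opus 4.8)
The plan is to compute, for a fixed independent set $I$ with related basis $C$ and $I = C \setminus Y$ ($Y \subset \IA(C)$), the set $R(I)$ of minimal faces of $F(I)$ that are not contained in any earlier facet $F(I')$, $I' \prec I$. Recall $F(I) = x_{I \cup \EP(I)}\, y_Y\, z_{I \cup \EA(I)}$. I want to show $R(I) = z_I$, i.e.\ that a face $A \subseteq F(I)$ lies in some earlier facet if and only if $z_I \not\subseteq A$. First I would establish the easy direction: if $z_I \subseteq A$, then $A$ is not contained in any $F(I')$ with $I' \prec I$. The key point is that the $z$-support of $F(I')$ is $I' \cup \EA(I')$, and by \cref{prop:lvprop} (applied via the related bases) one has $I' \cup \EA(I') \subseteq I \cup \EA(I)$ essentially forces $I \subseteq I'$ when $z_I \subseteq z_{I'}\cup z_{\EA(I')}$ and $I' \prec I$; more precisely, if $I \subseteq \supp_z(F(I'))$ and also $Y \subseteq \supp_y(F(I')) = Y'$ then comparing via \cref{def:extint} and \cref{prop:lvprop} one gets $I \leq_{ext/int} I'$, contradicting $I' \prec I$ unless $I' = I$. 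I would need to be slightly careful: having $z_I \subseteq A \subseteq F(I')$ only gives $I \subseteq I' \cup \EA(I')$; I must rule out the possibility that some element of $I$ is externally active for $I'$. Here the identity $I \setminus \IA(I) \cup \EA(I) = I' \setminus \IA(I') \cup \EA(I')$ combined with $\EA(I') = \EA(C')$ and the partition structure of \cref{prop:crapo} should pin down that $I \subseteq I'$, hence $I \leq_{ext/int} I'$.

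For the reverse direction, I would show that the shelling constructed in the proof of \cref{thm:shelling} already exhibits, for every $I' \prec I$, a containment $F(I') \cap F(I) \subseteq F(I) \setminus z_{c}$ for some $c \in I$ (not in $\EA(I)$, not a $y$- or $x$-index). Indeed both \cref{lem:case1eq} and \cref{lem:case2eq} produce the witness in the form $F(J)\cap F(K) = F(K)\setminus z_c$ with $c \in K$: in Case 1, $c \in K \setminus I \subset \IA(C) \subseteq K$, and in Case 2, $c$ is chosen with $c \in \IP(C)$, and since $K = C \setminus Y$ with $Y \subseteq \IA(C)$ we get $c \in K$. Thus in either case $z_c$ with $c \in \supp_z(F(K))$ and $c \in K$ (so $z_c \in z_K$) is deleted, which shows every face $A \subseteq F(I)$ with $z_I \not\subseteq A$ — wait, I actually need the stronger statement that for each $z_c \in z_I$ there is an earlier facet missing exactly $z_c$, or at least that the union over $I' \prec I$ of $F(I') \cap F(I)$ omits, for each $c \in I$, the vertex $z_c$ jointly with enough structure. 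The cleaner route: since $F_1, \dots, F_s$ is a shelling (already proved), $R(I)$ is well-defined as $\{v \in F(I) : F(I) \setminus v \subseteq \text{some } F(I'), I' \prec I\}^c$ — more precisely $R(I)$ is the set of vertices $v$ such that $F(I)\setminus v$ is \emph{not} in any earlier facet. So I would show: (a) for $v = z_c$ with $c \in I$, $F(I) \setminus z_c$ is not contained in any earlier $F(I')$ — this follows from the easy direction above, since $F(I)\setminus z_c$ still contains $z_{I \setminus c} \cup \{\text{all } x\text{'s and } y\text{'s of } F(I)\}$, and one argues as before that any $F(I')$ containing it has $I \leq_{ext/int} I'$; (b) for $v \ne z_c$ ($c\in I$), i.e.\ $v \in x_{I\cup\EP(I)} \cup y_Y \cup z_{\EA(I)}$, the face $F(I)\setminus v$ \emph{is} contained in some earlier facet — this I would get by exhibiting an explicit earlier independent set, e.g.\ for $v = z_e$, $e \in \EA(I)$, or $v = x_e$, or $v = y_e$, adapting the constructions in \cref{lem:case1constructJ}/\cref{lem:case2constructJ} or using the internally-related boolean-interval structure of \cref{prop:bool}.

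The main obstacle I anticipate is part (b) — producing, for each non-$z_I$ vertex of $F(I)$, a concrete earlier facet containing $F(I)$ minus that vertex — since this requires a case analysis over the three types of vertices and, for $x_e$ and $z_e$ with $e \in \EP(I)$ or $e \in \EA(I)$, invoking a change of related basis (the content of \cref{lem:acs}) to find a suitable $I'$; the $y$ case should be handled purely within the boolean interval of \cref{prop:bool} by taking $I' = I \cup e$ for appropriate $e$. I would organize the proof as: (1) the easy lemma that $z_I \subseteq A \Rightarrow A \notin \langle F(I') : I' \prec I\rangle$, giving $R(I) \supseteq z_I$; (2) show $R(I)$ contains no $x_e$, no $y_e$, and no $z_e$ with $e \in \EA(I)$, giving $R(I) \subseteq z_I$; conclude $R(I) = z_I$. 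Note also that this immediately recovers, via the proposition on $h$-vectors from restriction sets, that $h_i = \#\{I : |I| = i\}$, i.e.\ the $h$-vector of $\Delta_M$ is the $f$-vector of $\mathcal{I}(M)$, matching part (3) of the main theorem.
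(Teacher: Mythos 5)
Your proposal contains a genuine logical error in the second half. You characterize the restriction set as ``the set of vertices $v$ such that $F(I)\setminus v$ is \emph{not} in any earlier facet,'' but this is backwards. From the definition (a face $A\subseteq F(I)$ is new iff $R(I)\subseteq A$), taking $A=F(I)\setminus v$ gives: $v\notin R(I)\Leftrightarrow R(I)\subseteq F(I)\setminus v\Leftrightarrow F(I)\setminus v$ is new. So $v\in R(I)$ precisely when $F(I)\setminus v$ \emph{is} contained in an earlier facet. Consequently your claim (a) — that $F(I)\setminus z_c$ is \emph{not} in any earlier facet for $c\in I$ — is the opposite of what you need, and in fact it is false. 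Take $c\in I\cap\IA(C_I)$ where $I=C_I\setminus Y$: then $I\setminus c$ is internally related to $C_I$, one computes $F(I\setminus c)=x_{I\cup\EP(I)}\,y_{Y\cup c}\,z_{(I\cup\EA(I))\setminus c}\supseteq F(I)\setminus z_c$, and $I\setminus c<_{ext/int}I$. So $F(I)\setminus z_c$ \emph{is} contained in an earlier facet, as it must be. Your ``argument'' for (a) also doesn't go through: containment $F(I)\setminus z_c\subseteq F(I')$ only gives $z_{I\setminus c}\subseteq\supp_z F(I')$, and your easy direction (which needs all of $z_I$) does not apply.

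The easy direction (your part (1): $z_I\subseteq F(J)\Rightarrow I\leq_{ext/int}J$) is on the right track and is essentially what the paper does, splitting into the related/unrelated cases and using \cref{prop:lvprop} and \cref{LVextint}(2). But the paper handles the other inclusion much more efficiently than your part (b)/part (2). It does not need to show, for every vertex $v$ outside $z_I$ or for every $c\in I$, that a codimension-one face of $F(I)$ lies in an earlier facet. Since shellability is already established and part (1) shows $R(I)\subseteq z_I$ (not $\supseteq$, as you write — if every face containing $z_I$ is new, then $[z_I,F(I)]\subseteq[R(I),F(I)]$, so $R(I)\subseteq z_I$), it suffices to rule out $R(I)=z_S$ for $S\subsetneq I$. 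The paper does this by exhibiting, for each $S\subsetneq I$, an earlier facet containing just the small face $z_S$: write $S=B\setminus G$ via Crapo; if $B\neq A$ then $\IP(B)\subseteq S\subsetneq I\subseteq A$ forces $B<_{ext/int}A$, hence $B<_{ext/int}I$ and $z_S\subseteq F(B)$; if $B=A$ then $S<_{ext/int}I$ by definition and $z_S\subseteq F(S)$. This avoids the case analysis over the three vertex types and the change-of-basis machinery you anticipate as the main obstacle, because containing $z_S$ is a far weaker demand on the earlier facet than containing $F(I)\setminus v$.
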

\begin{proof}
Our argument proceeds in two parts: 
\begin{enumerate}
\item We begin by showing that $z_I$ is a new face when $F(I)$ is added to the complex generated by the facets preceding it in $\prec$. For this, we will prove that if $z_I \subseteq F(J)$ then $I \leq_{ext/int} J$ which means that $I$ must come before $J$ in $\prec$. 
\item We complete the proof by showing that $z_I$ is the minimal new face when $F(I)$ is added to the complex generated by the facets preceding it in $\prec$. To accomplish this, we argue that if $z_S \subsetneq z_I$ then $z_S$ must be contained in a facet that precedes $F(I)$ in $\prec$. 
\end{enumerate}
For (1), write $I=A\setminus Y_A$ and $J=B\setminus Y_B$ with $A$, $B$ bases, $Y_A\subset \IA(A)$ and $Y_B\subset \IA(B)$. Since $z_I\subset F(J)$ we have $I \subset J \cup \EA(J)$. If $I$ and $J$ are not related then 
\[\IP(A)= \IP(I) \subset I\subset J\cup \EA(J)\subset B\cup \EA(B).\] 
It follows that $\IP(A)\cap \EP(B) = \emptyset$ and so $A\leq_{ext/int}B$ by \cref{LVextint}(2). This implies $I\leq_{ext/int}J$. If $I$ and $J$ are related then $A=B$ and $I\subset J\cup \EA(J)$ implies that $I\subset J$, so $I\leq_{ext/int}J$ and we are done.
% old
% For (2), write $S=B\setminus G\cup H$ where $B$ is a basis, $G\subseteq \IA(B)$ and $H\subseteq \EA(B)$ by \cref{prop:crapo}(1) and $I=A\setminus Y$ where $A$ is a basis and $Y\subset \IA(A)$. Notice that $S = B\setminus G \cup H \subset B \cup \EA(B)$ so that $z_S$ is contained in the facet $F(B)$. If $A\neq B$ then 
% \[\IP(B)\subset B\setminus G \cup H = S \subsetneq I \subset A.\] 
% By \cref{LVint}(2) this implies that $B<_{ext/int}A$. Because $I$ is internally related to $A$, we obtain $B<_{ext/int}I$. We have just shown that $z_S$ is contained in the facet $F(B)$ and $F(B)$ precedes $F(I)$ in $\prec$. It remains to consider the case $A=B$. If $A=B$ then $S\subsetneq I\subset A$ implies that $H$ is the empty set. That is, $S=A\setminus G$ where $G\subset \IA(A)$. This gives $S\subsetneq I$ is an independent set and $I$ and $S$ are both internally related to the basis $A$. Further, $z_S$ is contained in the facet $F(S)$, with $S<_{ext/int}I$ by definition and thus $F(S)$ precedes $F(I)$ in $\prec$.

%new 4/16
For (2), observe that $S\subsetneq I$ so $S$ is an independent set. Write $S=B\setminus G$ where $B$ is a basis and $G\subseteq \IA(B)$ by \cref{prop:crapo}(2) and $I=A\setminus Y$ where $A$ is a basis and $Y\subset \IA(A)$. Notice that $S = B\setminus G  \subset B \cup \EA(B)$ so that $z_S$ is contained in the facet $F(B)$. If $A\neq B$ then 
\[\IP(B)\subset B\setminus G = S \subsetneq I \subset A.\] 
By \cref{LVint}(2) this implies that $B<_{ext/int}A$. Because $I$ is internally related to $A$, we obtain $B<_{ext/int}I$. We have just shown that $z_S$ is contained in the facet $F(B)$ and $F(B)$ precedes $F(I)$ in $\prec$. It remains to consider the case $A=B$. If $A=B$ then $S=A\setminus G$ where $G\subset \IA(A)$. This gives $S\subsetneq I$ with $I$ and $S$ are both internally related to the basis $A$. Further, $z_S$ is contained in the facet $F(S)$, with $S<_{ext/int}I$ by definition and thus $F(S)$ precedes $F(I)$ in $\prec$.
\end{proof}
The following corollary is immediate from the definition of the Tutte polynomial.
\begin{corollary}
The $h$-polynomial of $\Delta_M$ is 
\[
\sum_{I \in \mathcal{I}(M)} q^{n+r-|I|} = q^n T_M(1+q,1) 
\]
where $T_M$ is the Tutte polynomial of $M$.
\end{corollary}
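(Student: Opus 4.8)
The plan is to verify the claimed corollary directly from \cref{prop:RestrictionSets} together with the general relationship between restriction sets and the $h$-vector of a shellable complex. Recall from \cref{sec:shell} that for a shellable pure complex of dimension $d-1$ with shelling order $F_1,\dots,F_s$ and restriction sets $R_1,\dots,R_s$, one has $h_i = \#\{\,j : |R_j| = i\,\}$, hence the $h$-polynomial is $h_\Delta(q) = \sum_j q^{\,d - |R_j|}$.

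First I would fix a linear extension $\prec$ of $\leq_{ext/int}$ on $\mathcal{I}(M)$; by \cref{thm:shelling} the induced ordering of the facets of $\Delta_M$ is a shelling. By \cref{prop:RestrictionSets}, the restriction set of the facet $F(I)$ is $z_I$, so $|R(F(I))| = |I|$. The complex $\Delta_M$ has dimension $n + r - 1$, so $d = n+r$. Summing over all facets — which are indexed bijectively by the independent sets $I$ of $M$ — gives
\[
h_{\Delta_M}(q) = \sum_{I \in \mathcal{I}(M)} q^{\,n + r - |I|}.
\]
That establishes the first equality.

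For the second equality I would invoke the standard expansion of the Tutte polynomial via internal and external activities: $T_M(x,y) = \sum_{B \in \mathcal{B}(M)} x^{|\IA(B)|} y^{|\EA(B)|}$. Specializing at $y = 1$ kills the external activity bookkeeping and, using \cref{prop:crapo}(2) that each independent set $I$ is uniquely $B \setminus Y$ with $Y \subseteq \IA(B)$, one gets
\[
T_M(x,1) = \sum_{B \in \mathcal{B}(M)} x^{|\IA(B)|} = \sum_{B\in\mathcal{B}(M)} \sum_{Y \subseteq \IA(B)} (x-1)^{|\IA(B)| - |Y|} = \sum_{I \in \mathcal{I}(M)} (x-1)^{r - |I|},
\]
since $r - |I| = |\IA(B)| - |Y|$ when $I = B\setminus Y$. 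Setting $x = 1 + q$ yields $T_M(1+q,1) = \sum_{I\in\mathcal{I}(M)} q^{\,r-|I|}$, and multiplying by $q^n$ matches the displayed sum. There is essentially no obstacle here: everything is a routine bookkeeping consequence of \cref{prop:RestrictionSets}, the $h$-vector/restriction-set proposition, and the definition of the Tutte polynomial. The only point requiring a moment's care is confirming the dimension count $d = n+r$ and the bijection between facets and independent sets, both of which were recorded when $\Delta_M$ was defined.
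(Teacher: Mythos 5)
Your approach is the same as the paper's; the paper simply states that the corollary is ``immediate from the definition of the Tutte polynomial'' after \cref{prop:RestrictionSets}, and you supply the details. The first half (reading off the $h$-polynomial from the restriction sets via $h_\Delta(q)=\sum_j q^{d-|R_j|}$ with $d=n+r$) is exactly right.

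One small slip in the second half: the identity you invoke, ``$r - |I| = |\IA(B)| - |Y|$ when $I = B\setminus Y$'', is false. Since $|I| = r - |Y|$, the correct statement is $r-|I| = |Y|$, not $|\IA(B)|-|Y|$. (In your own running example: $B=345$, $\IA(B)=345$, $Y=\{3\}$, $I=45$ gives $r-|I|=1$ but $|\IA(B)|-|Y|=2$.) Your chain of equalities still holds because $\sum_{Y\subseteq \IA(B)}(x-1)^{|\IA(B)|-|Y|} = \sum_{Y\subseteq \IA(B)}(x-1)^{|Y|}$ under the involution $Y\mapsto \IA(B)\setminus Y$, but the cleaner fix is to expand the binomial the other way, $x^{|\IA(B)|} = \sum_{Y\subseteq \IA(B)}(x-1)^{|Y|}$, and then use $|Y|=r-|I|$ directly. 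With that correction the argument is complete and matches the paper's intent.
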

Recall from \cref{sec:hshelling} that a shelling  of a simplicial complex is called an $h$-shelling if its restriction sets form a subcomplex. Since the restriction sets of a shelling of $\Delta_M$ produced from any linear extension of $\leq_{ext/int}$ forms a complex visibly isomorphic to the independence complex of $M$, it is immediate that this is an $h$-shelling. However, we can say more.
% Recall from \cref{sec:hshelling} that a shelling  of a simplicial complex is called an $h$-shelling if its restriction sets form a subcomplex. If a complex has an $h$-shelling it is an $h$-complex. It is  immediate that $\Delta_M$ is an $h$-complex since the restriction sets of any linear extension of $\leq_{ext/int}$ form a complex visibly isomorphic to the independence complex of $M$. However, we can say more.
\begin{corollary}\label{cor:H'}
 For any linear extension of $\leq_{ext/int}$, the corresponding shelling of $\Delta_M$ has property \eqref{eq:H'}.
\end{corollary}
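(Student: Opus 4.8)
I want to verify property \eqref{eq:H'} for the shelling of $\Delta_M$ coming from a linear extension $\prec$ of $\leq_{ext/int}$. By the reduction noted after \eqref{eq:H'} (via \cite[Theorem~2.6]{ER}), it suffices to check the implication for faces $G$ of codimension one inside a facet $F$. So I would fix a facet $F = F(I)$ with restriction set $R(F(I)) = z_I$ (\cref{prop:RestrictionSets}), fix a generator $v \in E(x,y,z)$ with $v \in z_I$, i.e. $v = z_e$ for some $e \in I$, and let $G = F(I)\setminus w$ for some $w \in F(I)$ with $w \neq v$. The goal is to show $z_e \in R(G)$, where $R(G)$ is the restriction set of whichever facet $F_j$ has $G \in [R_j, F_j]$.

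**Key steps.** First I would identify $R(G)$ concretely. Since $G$ is a codimension-one face contained in the facet $F(I)$, and since $G \in [R_j, F_j]$ for a unique $j$, that facet $F_j$ is the $\prec$-first facet containing $G$; call it $F(J)$. Because $G \subset F(I)$, that first facet $F(J)$ satisfies $J \leq_{ext/int} I$, so $R(G) = z_J$ with $J \preceq I$. Now the content is: if $z_e \in z_I$ and $z_e \in G$ (which holds since $w \neq z_e$), then $z_e \in z_J$, i.e. $e \in J$. This I would extract from the description of $z$-supports: $z_e \in G \subset F(J)$ means $e \in J \cup \EA(J)$, so I must rule out $e \in \EA(J)$. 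Here is where I use $e \in I$: I would argue that the face $G$ already "detects" enough of the activity structure of $I$ to force $e$ into $J$ proper. Concretely, since $e \in I$ and $I$ is related to some basis $A$, and $z_J \subseteq G \subseteq F(I)$ forces $J \leq_{ext/int} I$; if $e \in \EA(J) = \EA(C_J)$ where $C_J$ is the basis related to $J$, then comparing with \cref{prop:lvprop} ($J \setminus \IA(J) \cup \EA(J) = C_J \setminus \IA(C_J) \cup \EA(C_J)$) and the relation $J \leq_{ext/int} I$ one gets a contradiction with $e \in I$ being internally passive or simply in the ground-set portion $I$. I would split into the cases $I, J$ internally related (then $C_J = A$, and $e \in I \subseteq A$ with $e \in \EA(A) = \EP(A)^c$ is impossible since $e \in A$) and not internally related (then $e \in I \subseteq A$ and $e \in \EA(J)\subseteq E \setminus C_J$, and one chases through \cref{LVextint}(4) that $e \in \IP(A)$ forces $e \notin \EA(J)$, since otherwise $\IP(A) \cap \EP(C_J) \neq \emptyset$ or a parallel incompatibility).

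**Main obstacle.** The delicate point is the non-internally-related case: I need to show that $e \in I$ with $e$ in the $z$-support of a facet $F(J)$ that is $\prec$-before $F(I)$ cannot have $e \in \EA(J)$ unless $e$ is already in $J$ itself. The cleanest route is probably to avoid reasoning about $G$ altogether and instead note that $R(G) = z_J$ where $F(J)$ is the $\prec$-minimal facet with $G \subseteq F(J)$, then use that $z_{I \setminus e} \subseteq G$ (since $w \neq z_e$, but $w$ could be some $x_f$ or $y_f$, so actually $z_I \setminus z_{\{w\}} $ might still contain all of $z_I$) — in fact if $w \notin z_I$ then $z_I \subseteq G$, so $z_I \subseteq F(J)$, giving $I \leq_{ext/int} J$ by part (1) of the proof of \cref{prop:RestrictionSets}, hence $I = J$ and $z_e \in z_J$ trivially; and if $w = z_f$ with $f \in I$, $f \neq e$, then $z_{I\setminus f}\subseteq G \subseteq F(J)$, and I'd run the argument of \cref{prop:RestrictionSets}(1) with $I \setminus f$ in place of $I$ to deduce $I\setminus f \leq_{ext/int} J$, then recover $e \in J$ from $e \in I \setminus f$ together with the structure of the boolean intervals in \cref{prop:bool}. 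Making this last deduction airtight — that deleting one internally active element and landing in a later facet still forces all remaining ground-set elements of $I$ into $J$ — is the step I expect to require the most care, and it is exactly where the $H$-property can fail for general shellings, so the special geometry of $z_I$ as the restriction set is essential.
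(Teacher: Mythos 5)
Your setup is right: reduce to codimension-one $G = F(I)\setminus w$ (in your notation, with $R(F(I))=z_I$), and split on whether $w \in z_I$. When $w \notin z_I$, you correctly observe $z_I \subseteq G$, so $G \in [z_I, F(I)]$ and $R(G)=z_I$, making the implication vacuous; this is exactly the paper's first case. The difficulty, as you recognize, is the case $w = z_f$ with $f \in I$, and here your argument has a genuine gap. You deduce $I\setminus f \leq_{ext/int} J$ from part (1) of \cref{prop:RestrictionSets}, and then hope to upgrade this to the set inclusion $I\setminus f \subseteq J$ via \cref{prop:bool}. But $\leq_{ext/int}$ does not refine inclusion once one crosses between boolean blocks, and \cref{prop:bool} describes only the internal structure of a single interval $(B,C]$; it says nothing about how the ground-set elements of $I\setminus f$ sit inside $J$ when the two are related to different bases. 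So the ``recover $e\in J$'' step is not justified. Your Approach A has the same underlying issue plus a misdirected invocation: from $C_J \leq_{ext/int} A$, \cref{LVextint}(2) gives $\IP(C_J)\cap\EP(A)=\emptyset$, not $\IP(A)\cap\EP(C_J)=\emptyset$, and \cref{LVextint}(4) only yields $e\in\IP(A)$, which is not by itself a contradiction.

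The paper sidesteps all of this with one observation you did not use: for any facet $F(L)$ one has $\supp_x(F(L))\cap\supp_z(F(L)) = (L\cup\EP(L))\cap(L\cup\EA(L)) = L$, since $\EP(L)$ and $\EA(L)$ are disjoint. Intersecting supports is monotone under inclusion, and when $w = z_f$ one computes $\supp_x(G)\cap\supp_z(G) = I\setminus f$. Since $G\subseteq F(J)$, this immediately gives $I\setminus f \subseteq \supp_x(F(J))\cap\supp_z(F(J)) = J$, which is precisely what \eqref{eq:H'} asks for. The point you were circling in Approach A — that you must rule out $e\in\EA(J)$ — is handled in one stroke: $x_e \in G \subseteq F(J)$ forces $e\in J\cup\EP(J)$, and $z_e\in G\subseteq F(J)$ forces $e\in J\cup\EA(J)$; disjointness of $\EP(J)$ and $\EA(J)$ then gives $e\in J$. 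You would want to add this support-intersection observation explicitly and replace the excursion through $\leq_{ext/int}$ and \cref{prop:bool}.
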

This further explains why the $h$-vector of $\Delta_M$ is the $f$-vector of $\mathcal{I}(M)$.
\begin{proof}
Assume that $G \subset F(J)$ is face. It is sufficient to consider the case that $G$ is codimension one in $F(J)$, by \cite[Theorem~2.6]{ER}. Say that $G$ is obtained by deleting some $x_i$, $y_i$ with $i \in E$, or $z_i$ with $i \notin J$, from $F(J)$. Then $G \in [z_J,F(J)]$ and hence $R(G) = R(F(J))=z_J$ and the implication in \eqref{eq:H'} is trivial.

Assume $G = F(J) \setminus z_j$ with $j \in J$. Say that $G \in [z_I, F(I)]$ for some independent set $I$. To show property \eqref{eq:H'}, it is sufficient to prove that $J \setminus j \subset I$. For a facet of $\Delta_M$, we can recover its corresponding independent set by intersecting its $x$ and $z$ supports. Thus,
\[
J \setminus j = \supp_x(G) \cap \supp_z(G) \subset
\supp_x(F(I)) \cap \supp_z(F(I)) = I.\qedhere
\]
\end{proof}

\subsection{A bivariate $h$-polynomial}
Recall that $\leq_{ext/int}$ is a partial order on $\mathcal{I}(M)$ defined by saying $I \leq_{ext/int} J$ if and only if the related bases $A$ and $B$ are distinct and satisfy $A \leq_{ext/int} B$, or the bases related to $I$ and $J$ are the same and $I \subset J$. In this section we consider a different partial order on $\mathcal{I}(M)$.

For $I,J \in \mathcal{I}(M)$, say that $I \leq'_{ext/int} J$ if and only if  the related bases $A$ and $B$ are distinct and satisfy $A \leq_{ext/int} B$, or the bases related to $I$ and $J$ are the same and $J \subset I$. Informally, one obtains $\leq'_{ext/int}$ from $\leq_{ext/int}$ by flipping each small boolean lattice in $\leq_{ext/int}$ upside down; see the Hasse diagram in \cref{ex:running2} and \cref{rem:auto}.

It is a straightforward generalization of the (admittedly subtle) arguments in \cref{sec:augea} that every linear extension of $\leq'_{ext/int}$ gives a shelling of $\Delta_M$. We leave the details to the motivated reader. 

We wish to describe the restriction sets for shelling orders, $\prec'$, on $\Delta_M$ that are produced from linear extensions of $\leq'_{ext/int}$. Before we state and prove our result we will require a lemma.

\begin{lemma}\label{lem:restrictionSets'} Let $A$ be a basis with $a\in \IP(A)$ and define the basis 
\[D:=A\setminus a \cup d\] 
where $d$ is a maximum. Then $D<_{ext/int}A$ and $\IA(A)\subset \IA(D)$.
\end{lemma}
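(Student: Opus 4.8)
The statement is exactly the internal-activity dual of \cref{lem:acs2.0}, restricted to the particularly clean case where $B = D$ is obtained from $A$ by a single "downward" basis exchange $A \setminus a \cup d$ with $a \in \IP(A)$ and $d$ chosen as a maximum among the elements that keep the set a basis. The plan is to first establish $D <_{ext/int} A$, and then derive $\IA(A) \subset \IA(D)$ by transporting the computation in the proof of \cref{lem:acs2.0} through matroid duality, or equivalently by redoing that computation directly with the roles of internal and external activity swapped.

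First I would verify $D <_{ext/int} A$. Since $a \in \IP(A)$, by the characterization of internal activity for a basis there is indeed some $d' \notin A$ with $d' > a$ and $A \setminus a \cup d'$ a basis; taking $d$ to be the maximum such element makes $D = A\setminus a \cup d$ a basis with $d > a$. To see $D <_{ext/int} A$ it suffices to show $D \leq_{int} A$ and $D \neq A$, since $\leq_{int}$ is refined by $\leq_{ext/int}$; the strictness is clear because $d \notin A$. For $D \leq_{int} A$ I would use \cref{LVint}(2): I must check $D \setminus \IA(D) \subset A$. The only element of $D$ not in $A$ is $d$, so it is enough to show $d \in \IA(D)$, i.e. that $d$ is internally active for $D$. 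By the exchange description of internal activity, $d \in \IP(D)$ would mean there is $e > d$ with $e \notin D$ and $D \setminus d \cup e = A \setminus a \cup e$ a basis; but then $e$ is a valid replacement for $a$ in $A$ with $e > d$, contradicting the maximality of $d$. Hence $d \in \IA(D)$ and $D \leq_{int} A$, so $D <_{ext/int} A$.

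Next I would prove $\IA(A) \subset \IA(D)$. The cleanest route is duality: apply \cref{lem:acs2.0} to the dual matroid $M^\perp$. In $M^\perp$, the complement $A^c := E \setminus A$ is a basis, $a \in \IP_M(A)$ translates to $a$ being externally passive for $A^c$ in $M^\perp$, and the exchange $A \setminus a \cup d$ corresponds to the dual exchange $A^c \setminus d \cup a = (D)^c$ — that is, $D^c = A^c \setminus d \cup a$ with $d$ playing (in $M^\perp$) the role that the distinguished exchanged element plays in \cref{lem:acs}. One must check that the element data lines up with the hypotheses of \cref{lem:acs2.0}: there we needed $B = C \setminus c \cup b$ with $c \in \IP(C) \cap \EP(A) \cap \EP(B)$ (from \cref{lem:acs}(5)), and the conclusion was $\IA(C) \subset \IA(B)$. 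Dualizing, $\IA_M(A)$ corresponds to $\EA_{M^\perp}(A^c)$, so the desired inclusion $\IA_M(A) \subset \IA_M(D)$ is exactly $\EA_{M^\perp}(A^c) \subset \EA_{M^\perp}(D^c)$. The condition $d \in \IP_M(A) \cap \IP_M(D)$ (which we essentially established above: $d \notin A$ gives $d \in \IP_M(A)$ vacuously in the sense that $d$ is not even in $A$... here one is careful, $\IP$ is a subset of the basis, so "$d \in \IP_M(D)$" — we showed $d \in \IA_M(D)$ above, and the dual statement needs $d$ externally passive for $D^c$ in $M^\perp$, which is $d \in \IA_M(D)$; good) matches, under duality, the hypothesis $c \in \IP(C) \cap \EP(B)$. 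The remaining bookkeeping — that $\EP_{M^\perp}(D^c) \subset \EP_{M^\perp}(A^c) \cup a$ dualizes \cref{lem:acs}(4), etc. — either follows from $D <_{ext/int} A$ directly via \cref{LVextint} or is subsumed once one invokes \cref{lem:acs2.0} for the pair $A^c, D^c$ in $M^\perp$ with distinguished element $d$.

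\textbf{Main obstacle.} The real work is not the duality dictionary, which is routine, but confirming that the pair $(A, D)$ with distinguished element $d$ genuinely fits the template produced by \cref{lem:acs} — in particular that $d \in \EP_{M^\perp}(A^c)$ and $d \in \EP_{M^\perp}(D^c)$ dualize correctly (these are the $\EP(A) \cap \EP(B)$ conditions that were crucial in the proof of \cref{lem:acs2.0} to kill $c$ from $\IP(B) \cup \EA(B)$), and that the analogue of \cref{lem:acs}(4) holds here. If the duality translation turns out to be finicky, the safe fallback is to not route through \cref{lem:acs2.0} at all but to repeat its argument verbatim with $\IP \leftrightarrow \EP$ and $\IA \leftrightarrow \EA$ interchanged: set $X := A \setminus a$ so $A = X \cup a$, $D = X \cup d$; from $D <_{ext/int} A$ and \cref{LVextint} get $\EP(D) \cup \IA(D) \subset \EP(A) \cup \IA(A)$ (the dual of inclusion (4) in \cref{LVextint}); use $d \in \IP(D)$ — wait, we want $a$ removed — use the fact that $a \notin D$ so $a$ contributes nothing on the left and $a \in \IP(A)$ sits on the right, peel it off to get $\EP(D) \cup \IA(D) \subset (X \cap \EP(A)) \cup \IA(A)$; split on whether $d \in \EP(D)$; in each case conclude $X \cap \EA(D) \subset X \cap \EA(A)$; wait — we want the reverse inclusion, $\IA(A) \subset \IA(D)$, so I would instead peel off the internally active $d$ from the $D$-side and track the $\EA$ complement to land on $\IA(A) = X \cap \IA(A) \subset X \cap \IA(D) \subset \IA(D)$, using $a \in \IP(A)$ so that $X \cap \IA(A) = \IA(A)$. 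This mirrors the final line of the proof of \cref{lem:acs2.0} exactly, with $c, b, C, B$ replaced by $a, d, A, D$ and activities dualized, and it avoids any appeal to \cref{lem:acs} beyond what $D <_{ext/int} A$ already gives.
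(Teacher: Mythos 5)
Your first part is correct and matches the paper: $d\in\IA(D)$ by maximality of $d$, so $\IP(D)\subset A\setminus a\subset A$ gives $D<_{int}A$ by \cref{LVint}(2), hence $D<_{ext/int}A$.

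Your primary route for the second part — applying \cref{lem:acs2.0} to $M^\perp$ — has a genuine gap. Since $\leq_{ext/int}$ reverses under complementation and dualization, $D<_{ext/int}^MA$ translates to $A^c<_{ext/int}^{M^\perp}D^c$. If the pair $(D^c,A^c)$ in $M^\perp$ did satisfy the hypotheses of \cref{lem:acs2.0} (with $C=D^c$, $B=A^c$), the conclusion would be $\IA_{M^\perp}(D^c)\subset\IA_{M^\perp}(A^c)$, which dualizes to $\EA_M(D)\subset\EA_M(A)$ — a statement about external activity, not the desired $\IA_M(A)\subset\IA_M(D)$. Moreover, \cref{lem:acs2.0} requires that $B$ be produced by \cref{lem:acs} from a third basis $A$ with $C\not\leq_{ext/int}A$; there is no such third basis in the present setting, and nothing forces $A^c$ to be a basis that the construction of \cref{lem:acs} could output. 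So the duality route as proposed does not close.

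Your "fallback" is closer to a correct argument, but as written you state the \cref{LVextint}(4)-derived inclusion backwards: $D<_{ext/int}A$ gives $\IP(D)\cup\EA(D)\subset\IP(A)\cup\EA(A)$, whose complement is $\IA(A)\cup\EP(A)\subset\IA(D)\cup\EP(D)$, not the containment you wrote. Once corrected, the argument does work: intersect both sides with $X=A\setminus a$, note $\EP(A)\cap X=\EP(D)\cap X=\emptyset$ since $X\subset A\cap D$, and use $a\in\IP(A)$ to get $\IA(A)=\IA(A)\cap X\subset\IA(D)\cap X\subset\IA(D)$. The paper's own proof is slightly more economical: having established $D<_{int}A$, it applies \cref{LVint}(3) directly to get $\IP(D)\subset\IP(A)$, then runs the same complement-in-$X$ argument without ever touching external activity. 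You should excise the duality detour, fix the direction in the \cref{LVextint} step, and present only the cleaned-up direct computation.
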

\begin{proof}
Let us begin by defining $X:=A\setminus a$ so that $A=X\cup a$ and $D=X\cup d$. Notice that $d\in \IA(D)$ and this gives $\IP(D)\subset X \subset A$. We obtain $D<_{int}A$ by \cref{LVint}(2) and hence $D<_{ext/int}A$. 

To show that $\IA(A)\subset \IA(D)$, first observe that $D<_{int}A$ and so by \cref{LVint}(3), $\IP(D)\subset \IP(A)$. Since $d\notin \IP(D)$ this gives $\IP(D)=\IP(D)\cap X$. Also, $a\notin D\supset \IP(D)$ and thus
\[\IP(D)\cap X =\IP(D) \subset \IP(A)\cap X.\]
Taking the complement in $X$ gives
\[\IA(A)\cap X \subset \IA(D)\cap X \subset \IA(D).\]
Recall, $a\in \IP(A)$ and this implies that $\IA(A)=\IA(A)\cap X$. The result follows.
\end{proof}

\begin{proposition}
 Let $\prec'$ be a linear extension of $\leq'_{ext/int}$ on $\mathcal{I}(M)$. For each independent set $I$ related to a basis $A$, write $I = A \setminus Y_I$ where $Y_I \subset \IA(A)$. Then the restriction set of $F(I)$ in $\prec'$ is equal to $y_{Y_I} z_{\IP(A)}$.
\end{proposition}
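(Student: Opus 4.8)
The strategy is to mirror the proof of \cref{prop:RestrictionSets}, working with the order $\leq'_{ext/int}$ in place of $\leq_{ext/int}$ and with the candidate restriction set $R_I := y_{Y_I} z_{\IP(A)}$, where $I = A \setminus Y_I$ with $Y_I \subset \IA(A)$. First I would observe that $R_I \subseteq F(I)$: indeed $Y_I \subseteq \supp_y(F(I))$ by definition, and $\IP(A) = \IP(I) \subseteq I \subseteq I \cup \EA(I) = \supp_z(F(I))$ using $\EP(I) = \EP(A)$ from \cref{prop:lvprop}. Then, as in \cref{prop:RestrictionSets}, the proof splits into two parts: (1) show $R_I$ is a \emph{new} face when $F(I)$ is added, i.e.\ if $R_I \subseteq F(J)$ then $I \leq'_{ext/int} J$, so $F(I)$ must precede $F(J)$ in $\prec'$; and (2) show $R_I$ is \emph{minimal}, i.e.\ every proper subface $G \subsetneq R_I$ lies in some earlier facet. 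Part (2) will require deleting either a $y$-generator or a $z$-generator from $R_I$, and each case needs a separate argument, the $z$-deletion case being the one that calls on \cref{lem:restrictionSets'}.

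\textbf{Part (1).} Suppose $R_I = y_{Y_I} z_{\IP(A)} \subseteq F(J)$ where $J = B \setminus Y_J$, $B$ a basis, $Y_J \subset \IA(B)$. From $z_{\IP(A)} \subseteq F(J)$ we get $\IP(A) \subseteq J \cup \EA(J) \subseteq B \cup \EA(B)$; if $A \neq B$, then $\IP(A) \cap \EP(B) = \emptyset$ gives $A \leq_{ext/int} B$ by \cref{LVextint}(2), hence $I \leq'_{ext/int} J$. If $A = B$, then I must use the $y$-containment: $Y_I \subseteq \supp_y(F(J)) = Y_J$ (the $y$-support of a facet related to $A$ is exactly its deleted set), so $Y_I \subseteq Y_J$, which means $J = A \setminus Y_J \subseteq A \setminus Y_I = I$; since $I,J$ are related to the same basis $A$, the definition of $\leq'_{ext/int}$ gives $I \leq'_{ext/int} J$. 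Either way $F(I) \preceq' F(J)$.

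\textbf{Part (2).} Let $G \subsetneq R_I$. There are two cases. If $G = R_I \setminus y_j$ for some $j \in Y_I$, then set $I' := I \cup j = A \setminus (Y_I \setminus j)$, which is related to $A$ with $I' \supsetneq I$, hence $I' <'_{ext/int} I$, so $F(I')$ precedes $F(I)$; and $G \subseteq F(I')$ since $\supp_y(F(I')) = Y_I \setminus j \supseteq$ the $y$-part of $G$ and $\IP(A) \subseteq I \subseteq I' \subseteq \supp_z(F(I'))$. If $G = R_I \setminus z_a$ for some $a \in \IP(A)$, then I invoke \cref{lem:restrictionSets'}: let $D := A \setminus a \cup d$ with $d$ the maximum element, so $D <_{ext/int} A$ and $\IA(A) \subseteq \IA(D)$. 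Since $Y_I \subseteq \IA(A) \subseteq \IA(D)$, the set $I'' := D \setminus Y_I$ is related to $D$, and because $D <_{ext/int} A$ (distinct related bases) we get $I'' <'_{ext/int} I$, so $F(I'')$ precedes $F(I)$. It remains to check $G \subseteq F(I'')$: the $y$-part is $Y_I = \supp_y(F(I''))$; for the $z$-part I need $\IP(A) \setminus a \subseteq I'' \cup \EA(I'') = \supp_z(F(I''))$. Here $\IP(D) \subseteq X := A \setminus a$ and in fact $\IP(D) = \IP(D) \cap X \subseteq \IP(A) \cap X = \IP(A) \setminus a$ from the proof of \cref{lem:restrictionSets'}; combined with $\IP(A) \setminus a \subseteq A \setminus a = X = D \setminus d$ and $Y_I \subseteq \IA(D)$ so $Y_I \cap \IP(D) = \emptyset$, one checks $\IP(A) \setminus a \subseteq D \setminus Y_I = I'' \subseteq \supp_z(F(I''))$. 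Thus every proper subface of $R_I$ is absorbed into an earlier facet.

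\textbf{Main obstacle.} The routine bookkeeping is in Part (1); the genuine content is the $z$-deletion case of Part (2), which is exactly why \cref{lem:restrictionSets'} was isolated beforehand — one needs a controlled basis exchange $A \rightsquigarrow D$ that both decreases in $\leq_{ext/int}$ and has internal activity containing $\IA(A)$, so that deleting the \emph{same} $Y_I$ produces a related independent set whose facet still contains $\IP(A) \setminus a$ in its $z$-support. I expect verifying the precise $z$-support containment $\IP(A) \setminus a \subseteq I''$ — tracking how $a$, $d$, and $Y_I$ interact with $\IP(D)$ versus $\IP(A)$ — to be the fiddliest step, but it follows directly from the inclusions already established in the proof of \cref{lem:restrictionSets'}.
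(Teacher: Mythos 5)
Your proposal follows essentially the same strategy as the paper's proof: Part (1) arguing by $z$-support that the two related bases are comparable and then using $y$-supports when the related bases coincide, and Part (2) splitting into a $y$-deletion case (handled by passing to $A \setminus S$ for a smaller $S$) and a $z$-deletion case (handled by the basis exchange $D = A \setminus a \cup d$ from \cref{lem:restrictionSets'}). Two small points are worth flagging.

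First, you only consider codimension-one subfaces of $R_I$, whereas the paper treats arbitrary proper subfaces $y_S z_T \subsetneq y_{Y_I} z_{\IP(A)}$, splitting on $S \subsetneq Y_I$ versus $S = Y_I, T \subsetneq \IP(A)$. Your reduction is legitimate --- once every codimension-one subface is shown to be old, every proper subface is old because faces of old faces are old --- but it relies implicitly on the fact that $\prec'$ is already known to be a shelling order, so state that reduction explicitly.

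Second, and more substantively, the reasoning in your $z$-deletion case contains a detour that is both irrelevant and potentially misleading. You argue $\IP(D) \subseteq \IP(A) \setminus a$ and then invoke $Y_I \cap \IP(D) = \emptyset$, but that disjointness is not what you need, and it does not imply what you need. Since $\IP(D) \subseteq \IP(A) \setminus a$ (not the reverse), $Y_I \cap \IP(D) = \emptyset$ does not yield $Y_I \cap (\IP(A) \setminus a) = \emptyset$. The correct and much shorter justification, which is what the paper does, is: $Y_I \subseteq \IA(A)$ and $\IA(A) \cap \IP(A) = \emptyset$, so $Y_I$ is already disjoint from all of $\IP(A)$, hence from $\IP(A) \setminus a$; combined with $\IP(A) \setminus a \subseteq A \setminus a \subseteq D$, one gets $\IP(A) \setminus a \subseteq D \setminus Y_I = I''$. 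Your stated conclusion is right, but the chain of implications you wrote down to justify it is not valid as written and should be replaced by this direct argument.
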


\begin{proof}
Similar to \cref{prop:RestrictionSets}, our proof is organized in two parts:
\begin{enumerate}
\item We begin by showing that $y_{Y_I}z_{\IP(A)}$ is a new face when $F(I)$ is added to the complex generated by the facets preceding it in $\prec'$.
\item We finish by showing that $y_{Y_I}z_{\IP(A)}$ is the minimal new face when $F(I)$ is added to the complex generated by the facets preceding it in $\prec'$.
\end{enumerate}

For (1), let $I=A\setminus Y_I$ and $J=B\setminus Y_J$ be independent sets internally related to the bases $A$, $B$ with $Y_I\subset \IA(A)$ and $Y_J\subset \IA(B)$. It is sufficient to show that if $y_{Y_I}z_{\IP(A)}$ is contained in a facet $F(J)$ then $I\leq'_{ext/int}J$.

Suppose that $y_{Y_I} z_{\IP(A)}\subset F(J)$. Consideration of the $z$-support gives
\[\IP(A) \subset J \cup \EA(J)\subset  B\cup \EA(B).\]
This implies that $\IP(A)\cap \EP(B) = \emptyset$ and thus $A\leq_{ext/int}B$. Since $A$ and $B$ are bases we obtain, $A\leq'_{ext/int}B$. If $A$ and $B$ are distinct bases then $I\leq'_{ext/int}J$ and we are done. Otherwise $A=B$, and we examine the $y$-supports. We see that $Y_I\subset Y_J$ and this implies that $J=A\setminus Y_J\subset A\setminus Y_I = I$. Again, we conclude that $I\leq'_{ext/int}J$.

% Minimality. 
For (2), let $I=A\setminus Y_I$ with $A$ a basis and $Y\subset \IA(I)$. It is sufficient to show that if $y_Sz_T \subsetneq y_{Y_I}z_{\IP(A)}$ then $y_Sz_T$ is contained in a facet that precedes $F(I)$ in $\prec'$.  

 Let us assume that $y_S z_T\subsetneq y_{Y_I} z_{\IP(A)}$. We will consider two cases according to the $y$-support. In both cases, we will construct an independent set $J$ so that the facet $F(J)$ contains the face $y_Sz_T$ and $J<'_{ext/int}I$. 

For the first case, say that $S\subsetneq Y_I$. Define $J:=A\setminus S$ and observe that $J$ is an independent set internally related to $A$. We claim that the face $y_Sz_T$ is contained in the facet $F(J)$. To see this, we have $S=\supp_y(F(J))$ by definition. Also, 
\[T\subset \IP(A) \subset J \subset  \supp_z(F(J)).\] 
Further, $I=A\setminus Y_I\subsetneq A\setminus S=J$ is properly contained in $J$ and thus $J<'_{ext/int}I$. 

For the second case, say that $S = Y_I$ and $T\subsetneq \IP(A)$. Let $a\in \IP(A)\setminus T$ and define the basis, 
\[D:=A\setminus a \cup d\] 
where $d$ is a maximum. The basis $D$ satisfies \cref{lem:restrictionSets'}. This gives $D<_{ext/int}A$ so that $D<'_{ext/int}A$. Also, $S = Y_I \subset \IA(A)\subset \IA(D)$. We claim that $J:=D\setminus S$ is the needed independent set. To see this, observe that it is immediate that $J<'_{ext/int}I$, so we argue that $y_Sz_T$ is contained in the facet $F(J)$. 

For the $y$-support, we have $S=\supp_y(F(J))$ by definition. For the $z$-support, by construction
\[T\subset \IP(A)\setminus a \subset A\setminus a \subset D.\]
Now recall that $Y_I\cap \IP(A) = \emptyset$, $S=Y_I$ and $T\subset \IP(A)$. This implies that $S\cap T= \emptyset$. Since $T\subset D$, we obtain the following inclusion,
\[T\subset D\setminus S = J \subset \supp_z(F(J)).\]
We have just shown that $y_Sz_T$ is contained in the facet $F(J)$ with $J<'_{ext/int}I$ and so we are done. 
\end{proof}

The restriction sets here have more information associated to them than their size alone: We may record how the restriction set are partitioned into $y$'s and $z$'s, thus obtaining a bivariate $h$-polynomial. This polynomial depends on $\leq'_{ext/int}$ but not $\prec'$.
\begin{corollary}
 Maintaining the notation above, the bivariate $h$-polynomial of $\Delta_M$ is 
 \[
 \sum_{I \in \mathcal{I}(M)} q^{-|Y_I|} t^{n+r-|\IP(I)|} =
t^n T_M((1/q+1)t,1).
 \]
\end{corollary}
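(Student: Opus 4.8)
The plan is to read the bivariate $h$-polynomial straight off the restriction sets computed in the preceding proposition, and then collapse the resulting sum using a grouping by related basis. The preceding proposition says that under any linear extension $\prec'$ of $\leq'_{ext/int}$, the restriction set of the facet $F(I)$ is $y_{Y_I}z_{\IP(A)}$, where $A$ is the basis internally related to $I$ and $I = A\setminus Y_I$ with $Y_I\subset\IA(A)$. This restriction set has exactly $|Y_I|$ letters of type $y$ and exactly $|\IP(A)|$ of type $z$, and since $\IP(I)=\IP(A)$ by \cref{prop:lvprop}, recording the type-decomposition of each restriction set — tracking a $y$ by $q^{-1}$ and the $z$-codegree by $t$, so that setting $q=t$ returns the ordinary $h$-polynomial $\sum_I t^{\,n+r-|R_I|}$ — shows the bivariate $h$-polynomial equals $\sum_{I\in\mathcal I(M)}q^{-|Y_I|}t^{\,n+r-|\IP(I)|}$. (In particular it depends only on $\leq'_{ext/int}$, not on $\prec'$, because the restriction sets do not.)

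For the evaluation I would group independent sets by their related basis via \cref{prop:crapo}(2): the independent sets related to a fixed basis $A$ are exactly the sets $A\setminus Y$ with $Y\subset\IA(A)$, each having $\IP$ equal to $\IP(A)$. Since every basis has $|A|=r$ and $\IA(A)\sqcup\IP(A)=A$, the exponent $n+r-|\IP(A)| = n+|\IA(A)|$ is independent of $Y$, so
\begin{align*}
\sum_{I\in\mathcal I(M)}q^{-|Y_I|}t^{\,n+r-|\IP(I)|}
&=\sum_{A\in\mathcal B(M)}t^{\,n+|\IA(A)|}\sum_{Y\subset\IA(A)}q^{-|Y|}\\
&=\sum_{A\in\mathcal B(M)}t^{\,n+|\IA(A)|}\bigl(1+q^{-1}\bigr)^{|\IA(A)|}
= t^{n}\sum_{A\in\mathcal B(M)}\bigl((1/q+1)t\bigr)^{|\IA(A)|}.
\end{align*}
By Crapo's expansion $T_M(x,y)=\sum_{A\in\mathcal B(M)}x^{|\IA(A)|}y^{|\EA(A)|}$ — the same normalization already used for the univariate $h$-polynomial of $\Delta_M$ — the last sum is precisely $T_M((1/q+1)t,1)$, completing the proof.

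There is no real obstacle here: the substantive work was done in establishing the restriction sets of $\prec'$, and what remains is bookkeeping. The only points needing a little care are fixing the normalization of the ``bivariate $h$-polynomial'' so that it specializes correctly (and so that, via the involution of \cref{rem:auto}, it is consistent with the univariate statement), and the identity $|\IP(A)|=r-|\IA(A)|$ which makes the exponent of $t$ depend only on $\IA(A)$; once these are in place the geometric sum over subsets of $\IA(A)$ and the recognition of the Tutte polynomial are immediate.
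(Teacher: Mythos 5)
Your proof is correct and follows essentially the same route as the paper's: both group the sum over independent sets by their related basis via Crapo's decomposition, use $|\IP(I)|=|\IP(A)|$ and $|\IA(A)|=r-|\IP(A)|$, apply the binomial theorem to turn the inner sum over $Y\subset\IA(A)$ into $(1+q^{-1})^{|\IA(A)|}$, and recognize the basis-activity expansion of $T_M$. The only cosmetic difference is direction (you expand the left side to reach the right, the paper expands $t^nT_M((1/q+1)t,1)$ and exchanges summation), plus the extra paragraph you add pinning down the normalization of the bivariate $h$-polynomial, which the paper leaves informal.
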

Setting $t=q$ gives the $h$-polynomial we computed earlier. Note that the bivariate information is strictly finer than the information of the normal $h$-polynomial.
\begin{proof}
First note that $\IP(I)$ is the internal passivity of its related basis. We use the well-known expansion for the Tutte polynomial,
\[
T_M(q,t) = \sum_{B \in \mathcal{B}(M)} q^{|\IA(B)|} t^{|\EA(B)|}.
\]
We have,
\begin{align*}
t^n T_M((1/q+1)t,1) =
\sum_{B \in \mathcal{B}(M)} (1/q+1)^{|\IA(B)|} t^{n+r-|\IP(B)|}\\=
\sum_{B \in \mathcal{B}(M)} \sum_{Y \subset \IA(B)}q^{-|Y|} t^{n+r-|\IP(B)|}.    
\end{align*}
Exchanging the order of summation gives the result.
\end{proof}

\section{The augmented no broken circuit complex}\label{sec:nbc}
Given a matroid $M$ on a set $E$ totally ordered by $<$, a circuit with its maximum element deleted is called a \textit{broken circuit}. The collection of subsets $S$ that contain no broken circuit form a simplicial complex $\NBC(M)$ -- the no broken circuit complex of $M$. The faces of $\NBC(M)$ are called nbc sets and it is immediate that every nbc set is independent in $M$. An independent set $I$ is at once seen to be nbc if and only if $\EA(I) = \emptyset$. The following result due to Provan is well-known; see \cite[Theorem~7.4.3]{bjorner}.
\begin{theorem}
For any matroid $M$, $\NBC(M)$ is shellable under any lexicographic ordering of its facets. Its $h$-polynomial is $T_M(q,0)$.
\end{theorem}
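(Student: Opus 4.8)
This is a classical result of Provan (which is why the paper merely cites \cite{bjorner}); here is the route I would take. Assume throughout that $M$ is loopless, since otherwise $\emptyset$ is a broken circuit and $\NBC(M)$ is void. The first observation is that the facets of $\NBC(M)$ are exactly the bases $B$ with $\EA(B)=\emptyset$: any nbc set $I$ is independent, and writing $I=B\setminus Y$ with $Y\subseteq\IA(B)$ as in \cref{prop:crapo}(2), \cref{prop:lvprop} gives $\EA(B)=\EA(I)=\emptyset$, so $I$ extends to the nbc basis $B$; conversely such a $B$ is plainly a maximal nbc set. Hence $\NBC(M)$ is pure of dimension $r-1$. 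The same two results show that the Crapo partition of $\mathcal{I}(M)$ restricts to
\[
\NBC(M)=\bigsqcup_{\substack{B\in\mathcal{B}(M)\\ \EA(B)=\emptyset}}[\,\IP(B),B\,],
\]
because a face $G$ with $\IP(B)\subseteq G\subseteq B$ is internally related to $B$, so $\EA(G)=\EA(B)$, and such a $G$ is nbc precisely when $\EA(B)=\emptyset$.

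For shellability I would import a shelling of the independence complex rather than argue from scratch. By Provan--Billera (see \cite{bjorner}), a suitable lexicographic order $B_1,\dots,B_N$ of the bases shells $\mathcal{I}(M)$ with restriction set $R(B_t)=\IP(B_t)$; equivalently, for an independent set $G$, the first $B_t$ in this order with $G\subseteq B_t$ is the one satisfying $\IP(B_t)\subseteq G$. Now delete from this list the bases with nonempty external activity, leaving the nbc bases $F_1,\dots,F_s$ in the inherited order; I claim this shells $\NBC(M)$. Fix $k$; the subcomplex $\langle F_k\rangle\cap\langle F_1,\dots,F_{k-1}\rangle$ is the set of $G\subseteq F_k$ lying in some earlier $F_j$. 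If $\IP(F_k)\subseteq G$ then $F_k$ is the very first basis in the full order containing $G$, so no earlier $F_j$ does. If $\IP(F_k)\not\subseteq G$, let $B_G$ be the first basis in the full order containing $G$; since $G\subseteq F_k$ is nbc and $G=B_G\setminus Y$ with $Y\subseteq\IA(B_G)$, \cref{prop:lvprop} forces $\EA(B_G)=\EA(G)=\emptyset$, so $B_G$ is one of the $F_j$, and it precedes $F_k$ because $\IP(F_k)\not\subseteq G$. Thus $\langle F_k\rangle\cap\langle F_1,\dots,F_{k-1}\rangle$ equals the pure codimension-one subcomplex with facets $\{F_k\setminus e:e\in\IP(F_k)\}$, which is nonempty for $k\geq 2$ since the only nbc basis with $\IP=\emptyset$ is $F_1$. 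So $F_1,\dots,F_s$ shells $\NBC(M)$, with restriction sets $\IP(F_k)$.

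The $h$-polynomial then drops out. The proposition expressing the $h$-vector through restriction sets gives $h_i=\#\{B\in\mathcal{B}(M):\EA(B)=\emptyset,\ |\IP(B)|=i\}$, and since $|\IP(B)|+|\IA(B)|=r$ for a basis $B$ this reads
\[
h_{\NBC(M)}(q)=\sum_{\substack{B\in\mathcal{B}(M)\\ \EA(B)=\emptyset}}q^{\,r-|\IP(B)|}=\sum_{\substack{B\in\mathcal{B}(M)\\ \EA(B)=\emptyset}}q^{|\IA(B)|}=T_M(q,0),
\]
using the activity expansion $T_M(x,y)=\sum_{B}x^{|\IA(B)|}y^{|\EA(B)|}$ evaluated at $y=0$.

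The genuine content is the lexicographic shellability of $\mathcal{I}(M)$ with restriction sets given by internal passivity: that is the classical symmetric-exchange argument (given $A$ before $C$, swap a well-chosen element of $C\setminus A$ for one of $A$, using $\EA(C)=\emptyset$ to keep the new basis nbc and lexicographically smaller), and a self-contained treatment would reprove it here. Everything above it is bookkeeping with \cref{prop:crapo} and \cref{prop:lvprop}. The strengthening to an arbitrary lexicographic ordering of the facets is Provan's refinement and is obtained by the same style of exchange analysis.
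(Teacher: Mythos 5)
The paper does not prove this statement at all---it is quoted as a classical theorem of Provan with a pointer to \cite[Theorem~7.4.3]{bjorner}---so there is no internal proof to match; what you give is a reduction of the classical statement to another classical statement, and the reduction itself is sound. Your bookkeeping is correct: a basis is a facet of $\NBC(M)$ iff $\EA(B)=\emptyset$, the Crapo intervals $[\IP(B),B]$ with $B$ nbc exhaust $\NBC(M)$ by \cref{prop:crapo} and \cref{prop:lvprop}, and the argument that the first basis $B_G$ containing an nbc face $G$ is itself nbc (since $\EA(B_G)=\EA(G)=\emptyset$) and precedes $F_k$ is exactly what is needed to show that the inherited order shells $\NBC(M)$ with restriction sets $\IP(F_k)$; the $h$-polynomial computation then matches the paper's convention and gives $T_M(q,0)$. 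The nonemptiness step is fine, though you assert rather than prove that only one nbc basis has empty internal passivity: by \cref{prop:crapo}(2) applied to $\emptyset$ there is exactly one basis with $\IP=\emptyset$, and it must be the first facet of the ambient shelling since the first restriction set of any shelling is empty, which is all you need.

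Two caveats keep this from being a proof of the full statement. First, the theorem as quoted asserts shellability under \emph{any} lexicographic ordering of the facets, whereas your argument only handles the single (suitably oriented, given the paper's max-based activity convention) lexicographic order inherited from the shelling of $\mathcal{I}(M)$; your closing sentence defers Provan's refinement to ``the same style of exchange analysis'' without carrying it out, and that clause does not follow formally from your restriction-set argument, since the identification of restriction sets with $\IP$ is tied to that particular order. Second, the entire weight rests on the black-boxed fact that a lexicographic order of the bases shells $\mathcal{I}(M)$ with restriction sets $\IP(B)$; citing \cite{bjorner} for this is legitimate, but it is of the same classical vintage as the target theorem, so your write-up should be presented as a derivation of one cited result from another rather than a self-contained proof---which, to your credit, you say explicitly.
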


As a first attempt at augmenting the nbc complex of $M$, we might consider the subcomplex of $\Delta_M$ generated by those $F(I)$ where $\EA(I) = \emptyset$. This turns out to have a large number of cone points: the elements of $E(x)$ are obvious cone points, although there are more. Deleting these elements, we obtain the following definition.
\begin{definition}
 The augmented nbc complex of $M$ is the complex $\aNBC_M$ on $E(y,z)$  with facets $G(I) := y_{B_I \setminus I} z_I$ for every nbc set $I$ of $M$ related a basis $B_I$.
\end{definition}

This is a subcomplex of $\Delta_M$, pure of dimension $r-1$. The nbc complex is visibly isomorphic to the subcomplex  of $\aNBC_M$ induced by $E(z) \subset E(y,z)$. Our main results on the augmented nbc complex is the following.
\begin{theorem}
 For any linear extension $\prec$ of  $\leq_{ext/int}$ on $\NBC(M)$, the corresponding ordering of the facets of $\aNBC_M$ is a shelling.
\end{theorem}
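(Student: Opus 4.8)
The plan is to mirror the proof of \cref{thm:shelling} almost verbatim, carefully tracking how the $x$-coordinates are stripped away and how the argument degenerates. Fix a linear extension $\prec$ of $\leq_{ext/int}$ on $\NBC(M)$ and take nbc sets $I \prec K$, i.e.\ $K \not\leq_{ext/int} I$. We must produce an nbc set $J \prec K$ and an element $c$ with
\[
G(I) \cap G(K) \subset G(J) \cap G(K) = G(K) \setminus z_c.
\]
The first observation to record is that for an nbc set $I$ related to a basis $B_I$, we have $\EA(I) = \EA(B_I) = \emptyset$ (an independent set is nbc exactly when its external activity is empty, and by \cref{prop:lvprop} $\EA(I) = \EA(B_I)$), so in fact $B_I$ is itself a basis with $\EA(B_I) = \emptyset$, hence an nbc basis. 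Thus $G(I) = y_{B_I \setminus I} z_I$ with $\supp_z(G(I)) = I$ and $\supp_y(G(I)) = B_I \setminus I = \IP(B_I) \setminus \IP(I) \cup$ (deleted internally active part); more simply, since $I = B_I \setminus Y_I$ with $Y_I \subset \IA(B_I)$, we have $\supp_y(G(I)) = Y_I$. The key point is that $G(I)$ is recovered from $F(I)$ by deleting all $x$-coordinates and all $z_e$ with $e \in \EA(I)$ — but $\EA(I)=\emptyset$, so $G(I)$ is just $F(I)$ with the $x$'s removed.

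The next step is to split into the two cases of the main proof. If $I$ and $K$ are internally related to a common basis $C$ (necessarily an nbc basis), then by \cref{lem:case1constructJ} there is $c \in K \setminus I \subset \IA(C)$ and $J := K \setminus c = C \setminus (Y_K \cup c)$ is internally related to $C$ with $J <_{ext/int} K$; moreover $J$ is nbc since $\EA(J) = \EA(C) = \emptyset$. The $z$-support and $y$-support computations of \cref{lem:case1eq} then go through unchanged after deleting the (now absent) $x$-coordinates: $\supp_z(G(J)) \cap \supp_z(G(K)) = (K \setminus c) = \supp_z(G(K)) \setminus c$ since $\EA = \emptyset$ throughout, and $\supp_y(G(J)) \cap \supp_y(G(K)) = Y_K = \supp_y(G(K))$; the inclusions involving $I$ follow because $c \in \EP(I)$, hence $c \notin I = \supp_z(G(I))$. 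If instead $I$ and $K$ are not internally related, they are related to distinct nbc bases $A$ and $C$ with $C \not\leq_{ext/int} A$; apply \cref{lem:acs} to get $B = C \setminus c \cup b <_{ext/int} C$. Here the one substantive thing to check is that $B$ is still an nbc basis, i.e.\ $\EA(B) = \emptyset$: by \cref{lem:acs}(4), for $d \notin B \cup C$ we have $d \in \EA(B) \iff d \in \EA(C) = \emptyset$; and $b \notin \EA(B)$ trivially since $b \in B$; so the only danger is an element of $C \setminus B = \{c\}$, but \cref{lem:acs}(5) gives $c \in \EP(B)$. Hence $\EA(B) = \emptyset$ and $B$ is nbc. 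Then $Y_K \subset \IA(C) \subset \IA(B)$ by \cref{lem:acs2.0}, so $J := B \setminus Y_K$ is a well-defined nbc set internally related to $B$, with $J <_{ext/int} K$, and \cref{lem:case2constructJ}(4) gives $c \in \EP(I)$. The support computations of \cref{lem:case2eq}, restricted to $y$ and $z$, then yield $\supp_z(G(J)) \cap \supp_z(G(K)) = \supp_z(G(K)) \setminus c$ and $\supp_y(G(J)) \cap \supp_y(G(K)) = Y_K = \supp_y(G(K))$, with the $I$-inclusions again following from $c \notin I$.

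I expect the main obstacle — really the only place where genuine care is needed beyond transcription — to be verifying that the basis $B$ furnished by \cref{lem:acs}, and the basis $C$ related to $K$, and the basis $A$ related to $I$, all remain \emph{nbc} bases, so that $J$ and the facets $G(\cdot)$ are actually defined in $\aNBC_M$. For $A$ and $C$ this is just the hypothesis that $I$ and $K$ are nbc together with $\EA(I)=\EA(B_I)$. For $B$ it requires the small argument above using \cref{lem:acs}(4),(5) to rule out external activity creeping in at $c$ or at elements outside $B \cup C$. Once nbc-ness is secured, the shelling inequality is exactly the $x$-free shadow of \cref{lem:case1eq} and \cref{lem:case2eq}, with $z_c$ the removed coordinate in both cases; in particular the restriction element $c$ always lies in $\supp_z(G(K))$ (as $c \in K$ in the related case, $c \in \IP(C) \cap \EP(A)$ in the unrelated case — wait, in the unrelated case $c \notin K$, so one must instead check $z_c \in G(K)$, i.e.\ $c \in K \cup \EA(K)$; since $c \in C$ and $K = C \setminus Y_K$ with $c \notin Y_K$ because $c \in \IP(C)$ while $Y_K \subset \IA(C)$, indeed $c \in K$). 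So the conclusion $G(J) \cap G(K) = G(K) \setminus z_c$ holds on the nose, and the theorem follows, with the restriction set of $G(I)$ being $z_I$, so that the $h$-vector of $\aNBC_M$ is the $f$-vector of $\NBC(M)$, as asserted in the introduction.
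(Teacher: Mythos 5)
Your proposal is correct and follows essentially the same route as the paper: reduce to the proof of \cref{thm:shelling}, and then verify that the independent set $J$ it produces is itself nbc (via \cref{prop:lvprop} in the internally related case, and via \cref{lem:acs} to see that $\EA(B)=\emptyset$ in the unrelated case), so that deleting the $x$-coordinates yields the shelling condition for $\aNBC_M$. Your only deviation is using parts (4) and (5) of \cref{lem:acs} rather than (3) and (4) to rule out external activity at $c$, an inessential variation.
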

\begin{proof}
It is sufficient to check that for any for any pair of nbc sets $I \prec K$ there is an nbc set $J \prec K$ and $c \in E$ satisfying
\[
 G(I) \cap G(K) \subset G(J) \cap G(K) = G(K)\setminus z_c.
 \]
Since $I$ and $K$ are independent sets, the proof of \cref{thm:shelling} produces $J$ and $c$ satisfying
\[
F(I) \cap F(K) \subset F(J) \cap F(K) = F(K) \setminus z_c
\]
We will show that $J$ can be chosen to be nbc, and deleting the elements of $E(x)$ from these facets gives the needed statement.
  
In the case that $J$ and $c$ are produced as in \cref{lem:case1constructJ} we have $J$ and $K$ are internally related and hence they  have equal external activity by \cref{prop:lvprop}. Since $K$ is nbc, it follows that $J$ is too. In the case that $J$ is produced as in
\cref{lem:case2constructJ} it suffices to assume that $I$ and $K$ are nbc bases and apply parts (3) and (4) of \cref{lem:acs} to see that $\EA(J) = \emptyset$ if $\EA(K) =  \emptyset$.
\end{proof}
The following two results are now immediate.
\begin{corollary}
Let $\prec$ be a linear extension of $\leq_{ext/int}$ on $\NBC(M)$. For each independent set $I$ of $M$, the restriction set of $G(I)$ in this order is equal to $z_I$.
\end{corollary}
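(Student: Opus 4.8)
The plan is to follow the proof of \cref{prop:RestrictionSets} almost verbatim, exploiting the one structural simplification of the nbc setting: a subset of an nbc set is again nbc, so the facets $G(S)$ are available for every subset $S$ of an nbc set, and moreover $\supp_z(G(S)) = S$ holds by definition. Fix a linear extension $\prec$ of $\leq_{ext/int}$ on $\NBC(M)$ and an nbc set $I$ related to a basis $B_I$. I would prove that the restriction set of $G(I)$ is $z_I$ by establishing two things: (a) $z_I$ is a new face when $G(I)$ is appended to the facets preceding it in $\prec$; and (b) for each $i \in I$, the face $z_{I\setminus i}$ already lies in an earlier facet. These suffice, exactly as in \cref{prop:RestrictionSets}: (a) gives $R(G(I)) \subseteq z_I$, and if this containment were proper we would have $R(G(I)) \subseteq z_{I\setminus i}$ for some $i$, making $z_{I\setminus i}$ a new face and contradicting (b).

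For (a): suppose $z_I \subseteq G(J)$ for an nbc set $J$ related to $B_J$. Comparing $z$-supports gives $I \subseteq \supp_z(G(J)) = J$. If $I$ and $J$ are internally related, then $I \subseteq J$ gives $I \leq_{ext/int} J$ directly from \cref{def:extint}. If they are related to distinct bases, then $\IP(B_I) = \IP(I) \subseteq I \subseteq J \subseteq B_J \cup \EA(B_J)$, so $\IP(B_I) \cap \EP(B_J) = \emptyset$, whence $B_I \leq_{ext/int} B_J$ by \cref{LVextint}(2), and hence $I \leq_{ext/int} J$ by the proposition following \cref{def:extint}. Either way $I \preceq J$, so no facet preceding $G(I)$ contains $z_I$. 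This is the same computation as part (1) of \cref{prop:RestrictionSets}, with the hypothesis $I \subseteq J \cup \EA(J)$ replaced by the stronger $I \subseteq J$ that comes from $\supp_z(G(J)) = J$.

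For (b): fix $i \in I$, set $S := I \setminus i \subsetneq I$, note $S$ is nbc, and let $B_S$ be its related basis. By definition $z_{I\setminus i} = z_S \subseteq \supp_z(G(S)) = S$, so $z_{I\setminus i} \subseteq G(S)$, and it remains only to see that $G(S)$ precedes $G(I)$ in $\prec$. If $S$ and $I$ are internally related, then $S \subsetneq I$ gives $S <_{ext/int} I$. Otherwise $B_S \neq B_I$, and $\IP(B_S) = \IP(S) \subseteq S \subsetneq I \subseteq B_I$, so $B_S \setminus \IA(B_S) \subseteq B_I$ gives $B_S <_{int} B_I$ by \cref{LVint}(2) (the bases being distinct), hence $B_S <_{ext/int} B_I$, hence $S <_{ext/int} I$ by the proposition following \cref{def:extint}. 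In all cases $S \prec I$, so $G(S)$ is an earlier facet containing $z_{I\setminus i}$.

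The only point requiring care is the basis bookkeeping in (b): when the deleted element $i$ is internally passive for $B_I$, the set $S = I \setminus i$ is related to a strictly smaller basis $B_S \neq B_I$, so one must invoke the internal order via \cref{LVint} rather than staying inside a single boolean interval $(B,C]$. This is precisely the subtlety already handled in \cref{prop:RestrictionSets}, and since nbc sets have $\EA = \emptyset$ there is no external-activity bookkeeping at all, so I do not anticipate any genuine obstacle beyond transcribing that argument.
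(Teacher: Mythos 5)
Your proof is correct and follows exactly the route the paper intends: the paper declares this corollary immediate because the argument of \cref{prop:RestrictionSets} transfers to $\aNBC_M$, which is what you carry out. Your one adaptation---using the facet $G(S)$ for the nbc set $S=I\setminus i$ directly (available since subsets of nbc sets are nbc), rather than the facet of a related basis as in part (2) of \cref{prop:RestrictionSets}---is exactly the right adjustment, since the basis related to $S$ need not be nbc.
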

\begin{corollary}
The $h$-polynomial of $\aNBC_M$ is equal to $T_M(1+q,0)$. That is, the $h$-vector of $\aNBC_M$ is the $f$-vector of $\NBC(M)$.
% It is sum_{nbc sets I} q^{r-|I|}
\end{corollary}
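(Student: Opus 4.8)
The plan is to read the $h$-vector of $\aNBC_M$ straight off the shelling just constructed, using the restriction sets recorded in the preceding corollary, and then rewrite the resulting generating function as a Tutte polynomial evaluation by invoking Provan's formula.

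\textbf{Step 1: the $h$-vector.} The corollary immediately above states that, for a shelling of $\aNBC_M$ coming from a linear extension $\prec$ of $\leq_{ext/int}$ on $\NBC(M)$, the restriction set of the facet $G(I)$ is $z_I$. Under the identification $z_S \leftrightarrow S$, the collection of restriction sets $\{z_I : I \in \NBC(M)\}$ is literally (a complex isomorphic to) $\NBC(M)$; in particular it is a simplicial complex, so this shelling is an $h$-shelling with $h$-complex $\NBC(M)$. By the proposition on $h$-shellings in \cref{sec:hshelling}, the $h$-vector of $\aNBC_M$ equals the $f$-vector of $\NBC(M)$ --- this is the second assertion. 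Equivalently, since $\aNBC_M$ is pure of dimension $r-1$, one has $h_{\aNBC_M}(q) = \sum_{I \in \NBC(M)} q^{\,r-|I|}$, i.e. $h_i = \#\{I \in \NBC(M) : |I| = i\}$.

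\textbf{Step 2: passing to the Tutte polynomial.} Both $\NBC(M)$ and $\aNBC_M$ have top dimension $r-1$ (facets of $\aNBC_M$ have cardinality $|B_I| = r$, and $\NBC(M)$ is pure of dimension $r-1$ by Provan's shelling), so in the notation of the definition of the $h$-vector we have $d = r$ for each. Applying that defining relation to $\NBC(M)$ gives $\sum_i f_i\bigl(\NBC(M)\bigr)(q-1)^{r-i} = h_{\NBC(M)}(q)$, and Provan's theorem (quoted above) identifies the right-hand side with $T_M(q,0)$. Substituting $q \mapsto 1+q$ turns the left-hand side into $\sum_i f_i\bigl(\NBC(M)\bigr)q^{\,r-i}$, which is exactly $h_{\aNBC_M}(q)$ by Step 1; hence $h_{\aNBC_M}(q) = T_M(1+q,0)$, as claimed.

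I do not expect a genuine obstacle here: the content is entirely bookkeeping once the previous corollary is available. The only points deserving a moment's care are the degree conventions --- checking that $\NBC(M)$ and $\aNBC_M$ share the same top dimension $r-1$ so that the single substitution $q \mapsto 1+q$ simultaneously matches both $f$-to-$h$ conversions --- and the degenerate case where $M$ has a loop, in which $\NBC(M)$ and $\aNBC_M$ are empty and $T_M(1+q,0) = 0$, so the identity holds trivially.
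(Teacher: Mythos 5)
Your proof is correct and amounts to the ``immediate'' argument the paper alludes to: read off the $h$-vector from the restriction sets $z_I$ of the preceding corollary (so $h_i(\aNBC_M)=\#\{I\in\NBC(M):|I|=i\}=f_i(\NBC(M))$, since both complexes have top dimension $r-1$), then apply Provan's identity $h_{\NBC(M)}(q)=T_M(q,0)$ together with the $f$-to-$h$ relation and the substitution $q\mapsto 1+q$. The detour through the $h$-shelling proposition is a slightly heavier tool than strictly needed---the identity $h_i=\#\{j:|R_j|=i\}$ already gives Step~1 directly---but it is sound and is the same bookkeeping the paper intends.
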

Since the restriction sets in one of our shellings of $\aNBC_M$ form a complex isomorphic to $\NBC(M)$, they are $h$-shellings as in \cref{sec:hshelling}. 
%I am confusing myself so badly with the definitions. Linear extensions \prec of \leq_{ext/int} on NBC(M) produce h-shellings of \aNBC_M. The  restriction sets are isomorphic to \NBC(M) so \NBC(M) is the h-complex? 
%Also shellings produced from linear extensions of \leq_{ext/int} satisfy (H')? 

%Do we want to state this as a corollary?
\begin{corollary} For any linear extension $\prec$ of $\leq_{ext/int}$ on $\NBC(M)$, the corresponding shelling of $\aNBC_M$ has property \eqref{eq:H'}.
\end{corollary}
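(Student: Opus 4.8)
The plan is to mimic the proof of \cref{cor:H'} almost verbatim, replacing $\Delta_M$ by $\aNBC_M$, $F(\cdot)$ by $G(\cdot)$, and the ambient ground set $E(x,y,z)$ by $E(y,z)$. By the preceding corollary the restriction set of the facet $G(I)$ in the shelling $\prec$ is $z_I$, so we must verify the implication \eqref{eq:H'}: if $G \in \aNBC_M$ is a face with $e \in G \subset G(K)$ and $e \in R(G(K)) = z_K$, then $e \in R(G)$. By \cite[Theorem~2.6]{ER} it suffices to treat the case that $G$ is codimension one in the facet $G(K)$.

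First I would dispose of the easy subcase. Writing $K = B_K \setminus Y_K$ with $B_K$ a basis and $Y_K \subset \IA(B_K)$, the facet $G(K) = y_{B_K\setminus K}z_K$ has $y$-support $B_K \setminus K$ and $z$-support $K$. If $G$ is obtained from $G(K)$ by deleting a $y_i$ (with $i \in B_K \setminus K$) or by deleting a $z_i$ with $i \notin K$ — but since $\supp_z(G(K)) = K$ there is no such $z_i$, so the only possibility besides the main case is deleting some $y_i$ — then $G \in [z_K, G(K)]$, hence $R(G) = R(G(K)) = z_K$, and the implication in \eqref{eq:H'} is automatic because its conclusion coincides with the hypothesis $e \in z_K$.

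The remaining case is $G = G(K) \setminus z_k$ for some $k \in K$; say $G$ lies in the interval $[z_I, G(I)]$ for an nbc set $I$ (this $I$ exists because the shelling's intervals $[z_I, G(I)]$ partition $\aNBC_M$), so $R(G) = z_I$. It suffices to show $K \setminus k \subseteq I$, since then the only element of $z_K$ that could fail to lie in $R(G) = z_I$ is $z_k$, which has already been deleted from $G$, so every $e$ with $e \in G$ and $e \in z_K$ satisfies $e \in z_{K\setminus k} \subseteq z_I = R(G)$. To recover $I$ from a facet of $\aNBC_M$, note that for an nbc set $I$ the facet $G(I) = y_{B_I\setminus I}z_I$ has $z$-support exactly $I$; more usefully, intersecting the $x$- and $z$-supports of the corresponding facet $F(I)$ of $\Delta_M$ recovers $I$ as in \cref{cor:H'}. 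Since $G(I)$ and $F(I)$ have the same $z$-support, and $\supp_z(G) \subseteq \supp_z(G(I)) = I$ while $\supp_z(G) = K \setminus k$, we get $K \setminus k = \supp_z(G) \subseteq I$, as desired.

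I do not expect a genuine obstacle here: the argument is a direct transcription of \cref{cor:H'}, the only points requiring a moment's care being (i) that, because $\supp_z(G(K)) = K$ exactly (all of $z_K$, no stray $z$-elements), the codimension-one faces of $G(K)$ split into $y$-deletions, which land in the trivial case, and a single family of $z$-deletions $z_k$ with $k \in K$; and (ii) that $I$ is still recoverable as the $z$-support of its facet in $\aNBC_M$, so the inclusion $K \setminus k \subseteq I$ follows from $\supp_z(G) \subseteq \supp_z(G(I))$. If one prefers to avoid re-deriving the recovery statement, one can simply invoke \cref{cor:H'} for the ambient complex $\Delta_M$: the facet $F(K)$ of $\Delta_M$ restricts to $G(K)$ after deleting $E(x)$, the face $G \cup x_{E}$ (adjoining all $x$-generators) is a codimension-one face of $F(K)$ to which \cref{cor:H'} applies, and deleting $E(x)$ again transports the conclusion back to $\aNBC_M$.
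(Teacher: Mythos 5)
Your proposal is correct and follows essentially the same argument as the paper: by \cite[Theorem~2.6]{ER} reduce to codimension-one faces, handle the $y$-deletion case as trivial since the face then lies in $[z_K, G(K)]$, and in the $z$-deletion case $G = G(K)\setminus z_k$ obtain $K\setminus k \subseteq I$ by comparing $z$-supports. The paper's proof is identical in structure, just stated more tersely; your extra observations (that no $z_i$ with $i\notin K$ can be deleted, and the alternative route via \cref{cor:H'} for $\Delta_M$) are correct but not needed.
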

\begin{proof}
    Assume $S \subset G(J)$ is codimension one. If $S$ is obtained by deleting some $y_i$ from $G(J)$ then $S \in [z_J,G(J)]$ and the implication in \eqref{eq:H'} is trivial. If $S = G(J) \setminus z_j$ for some $j \in J$ then assume $S \in [z_I,G(I)]$ for some no broken circuit set $I$. It is immediate that $J \setminus j \subset I$ by taking $z$-supports and this is what \eqref{eq:H'} demands.
\end{proof}

\section{Questions}
We close with a short selection of questions for future work.
\begin{enumerate}
% \item What is the connection between the augmented no broken circuit complex and (augmented) tautological bundles of matroids? Is there a connection to vector bundles on the bipermutohedron?
 \item We can identify the lattice of flats of $M$ as a sublattice of the poset $\mathcal{I}(M)$ (with the order $\leq_{ext/int}$). Here a flat $X$ corresponds to the unique independent set $I$ satisfying $I \cup \EA(I) = X$. Is  this poset non-pure shellable? Similarly, is the external or internal order on $\mathcal{B}(M)$ (pure) shellable?
 \item Does any direct sum of tautological bundles give rise to a shellable simplicial complex, as in \cref{ssec:motivation}?
\end{enumerate}

\bibliographystyle{amsalpha}
\bibliography{biblio}
\end{document}